\newtheorem{theo}{Theorem}[section]
\newtheorem{lemm}[theo]{Lemma}
\newtheorem{defi}[theo]{Definition}
\newtheorem{coro}[theo]{Corollary}
\newtheorem{rema}[theo]{Remark}
\numberwithin{equation}{section}
\begin{document}
	\title{ Existence and uniqueness of the globally conservative
		solutions for a weakly dissipative   Camassa-Holm equation in time weighted $H^1(\mathbb{R})$ space}
	\author{
		Zhiying $\mbox{Meng}^1$ \footnote{email: mengzhy3@mail2.sysu.edu.cn} \quad and\quad
		Zhaoyang $\mbox{Yin}^{1,2}$ \footnote{email: mcsyzy@mail.sysu.edu.cn}\\
		$^1\mbox{Department}$ of Mathematics, Sun Yat-sen University,\\
		Guangzhou, 510275, China\\
		$^2\mbox{Faculty}$ of Information Technology,\\
		Macau University of Science and Technology, Macau, China
	}
	\date{}
	\maketitle
	\begin{abstract}
		In this paper, we  prove that the existence and uniqueness of globally weak solutions to the  Cauchy problem for the weakly dissipative  Camassa-Holm equation in time weighted $H^1$ space. First, we derive an equivalent semi-linear system by introducing some new variables, and present the globally conservative solutions of this  equation in time weighted $H^1$ space. Second, we show that the peakon solutions are conservative weak solutions in $H^1.$ Finally, given a conservative solution,  we introduce
		a set of auxiliary variables tailored to this particular solution, and  prove that these
		variables satisfy a particular semilinear system having unique solutions. In turn, we get
		the uniqueness of the conservative solution in t he original variables.
	\end{abstract}
	
	\noindent \textit{Keywords}: A weakly dissipative Camassa-Holm  equation;   Globally weak 
	solutions; Peakon solutions; Uniqueness \\
		\noindent \textit{Mathematics Subject Classification}: 35Q53, 35B30, 35B65, 35C05 
	\tableofcontents
	\section{Introduction}
	\par
	$~~~~$Recently, Freire  studied the weakly dissipative  Camassa-Holm  equation   \cite{Igor2020jde}
	\begin{equation}\label{u}
		\left\{\begin{array}{l}
			u_t-u_{txx}+3uu_x+\lambda(u-u_{xx})=2uu_x+uu_{xxx}+\alpha u+\beta u^2u_x+\gamma u^3u_x+\Gamma u_{xxx},~x\in\mathbb{R},\ t>0, \\
			u(0,x)=u_0,
		\end{array}\right.
	\end{equation}
	where $\alpha, \beta,\gamma, \Gamma$  are any real numbers, and $\lambda>0$. 
	The above equation \eqref{u} can be rewritten as
	\begin{equation}\label{001}
		\left\{\begin{array}{l}
			u_t+(u+\Gamma)u_x+\lambda u=Q,~x\in\mathbb{R},\ t>0, \\
			u(0,x)=u_0,
		\end{array}\right.
	\end{equation}
	with $Q=\Lambda ^{-2}\partial_x\Big(h(u)-u^2-\frac{1}{2}u_x^2\Big)$ and $h(u)=(\alpha+\Gamma)u+\frac{\beta}{3}u^3+\frac{\gamma}{4}u^4, ~\Lambda ^{-2}=(1-\partial_{xx})^{-1}.$
	The local well-posedness of its Cauchy problem in Sobolev spaces $H^s$ with $s>\frac 3 2.$ Meng and Yin \cite{MengYin} proved the local well-posedness and global strong solutions under the condition that small initial data to  \eqref{u}  in critial Besov spaces $B^s_{p,r}$ with $(\rm i).~s>1+\frac 1 p;~(\rm ii).
	~s=1+\frac 1 p,~r=1,~p\in[1,\infty).$  The integrability and the existence of global strong  solutions were studied in Sobolev spaces \cite{Silva2020}.	In particular,  the equation \eqref{u} has the portery with $\|u\|_{H^1}=e^{-\lambda t} \|u_0\|_{H^1}.$
	
	As  $\lambda=\alpha=\beta=\gamma=\Gamma=0,$ it reduces to the Camassa-Holm (CH) equation
	\cite{Constantin09,Constantin01scat}
	\begin{align}\label{CH}
		u_t-u_{xxt}=3uu_x-2u_xu_{xx}-uu_{xxx},
	\end{align}
	which is completely integrable, and has  bi-Hamiltonian structure \cite{Camassa1993,Constantin01scat}.The local well-posedness	for the Cauchy problem of the CH equation in Sobolev  spaces and Besov spaces were presented in \cite{LiA2000jde,R2001,Constantin1998,Danchin2003wp,Danchin2001inte,Li2016nwpC,YE2021A}.The ill-posedness for the CH equation has been studied in \cite{Guoyy2021Ill,Guo2019ill,Lij022Ill}.  Its existence and uniqueness of global weak solutions with initial data $u_0\in H^1(\mathbb{R})$ were proved in \cite{Constantin2000gw,Xin2000ws,Holden07lag,Bressan2015character}.~Moreover, the CH equation has globally conservative, dissipative solutions   and algebro-geometric solutions \cite{Bressan2007gd,Bressan2006g-c,Qiao2003}.
	
	In this paper, we will prove the existence and uniqueness of the globally conservative
	solutions to  \eqref{u} in time weighted $H^1$ space.  Letting $k=e^{\lambda t}u$, we conclude that $\|k\|_{H^1}= \|u_0\|_{H^1}.$ Hence, the existence and uniqueness of the globally conservative
	solutions of  \eqref{001}  in time weighted $H^1$ space can be transformed into the  existence and uniqueness of the globally conservative
	solutions to the following equation
	\begin{equation}
		\left\{\begin{aligned}
			&k_t+(e^{{-\lambda }t}k+\Gamma)k_x=-\Lambda^{-2}\partial_x(-H(k)+e^{{-\lambda }t}k^2+ \frac  {e^{{-\lambda }t}} {2}k_x^2 ),\\
			&k(0,x)=\bar{k}=u_0,\label{k11}
		\end{aligned}\right.
	\end{equation}
	where  $H(k)=(\alpha+\Gamma)k+\frac{\beta e^{{-2\lambda }t}}{3}k^3+\frac{\gamma e^{{-3\lambda }t}}{4}k^4.$ Noticing that \eqref{u} and \eqref{k11} are equivalent.  Consequently, in this paper, we  mainly study the global conservative  weak solutions of  \eqref{k11} with initial data $\bar{k}\in H^1(\mathbb{R}),$  and prove that the  equation \eqref{k11}  has a unique solution, globally in time. However, in the process of proving the globally conservative solutions, in order to get the estimate $\|\Lambda^{-2}\partial_x \cdot  \Big((\alpha+\Gamma)k\Big)\|_{L^{\infty}}$ (see Theorem \ref{global} ), we use variable transformations to handle the  term, which the idea comes from the previous works \cite{LuoznMOCH}.
	
	The paper is organized as follows. In Section 2, we give some  definitions and  estimates, which  will be used in the sequel.  {Sections 3, 4} are devoted to construct  a solution to an equivalent semi-linear system by introducing a set of new variables, this yields a conservative solution to the equation \eqref{k11}. In {Section 5}, we prove that the peakon solution of  \eqref{k11}  is  conserved in $H^1.$ In Section 6, by constructing an ordinary differential system, we prove that the conservative solutions of  \eqref{k11} is unique.
	
		\section{Premiliary}
	\par
	
	~~In this section, we first recall some definitions of globally conservative  weak solutions for \eqref{k} and give some results. We study the following equation
	\begin{equation}
		\left\{\begin{aligned}
			&k_t+(e^{{-\lambda }t}k+\Gamma)k_x=-{P}_x,\quad t>0, \ x\in \mathbb{R},\\
			&k(0,x)=\bar{k}=\bar{u},\\
		\end{aligned} \right. \label{k}
	\end{equation}	
with $P$ is defined as a convolution:
\begin{align}\label{PP}
	P\triangleq \frac 1 2  e^{-|x|}\ast\Big[-\Big((\alpha+\Gamma)k+\frac{\beta e^{{-2\lambda }t}}{3}k^3+\frac{\gamma e^{{-3\lambda }t}}{4}k^4\Big)+e^{-\lambda t}k^2+\frac  {e^{{-\lambda }t}} {2}k_x^2\Big].
\end{align}
For simplify the presentation, we introduce the following  notation
\begin{align*}
	H(k)=-\Big((\alpha+\Gamma)k+\frac{\beta e^{{-2\lambda }t}}{3}k^3+\frac{\gamma e^{{-3\lambda }t}}{4}k^4\Big),~~H_2(k)=e^{-\lambda t}k^2+\frac  {e^{{-\lambda }t}} {2}k_x^2.
\end{align*}
For smooth solutions, 	differentiating  \eqref{k} with respect to  $x$, we have
	\begin{align}\label{kx}
	k_{tx}+(e^{{-\lambda }t}k+\Gamma)k_x=e^{{-\lambda }t}k^2-H(k)- \frac  {e^{{-\lambda }t}} {2}k_x^2-{P}(t,x).
\end{align}
It follows from  \eqref{k}-\eqref{kx} that
	\begin{align}
		&(k^2)_t+(\frac{2e^{-\lambda t}k^{3}}{3}+{\Gamma k^2}+2k{P})_x=2k_x{P},\notag\\
		&(k^2_x)_t+\Big(e^{-\lambda t}kk^2_x+\Gamma k_x^2+(\alpha+\Gamma)k^2+\frac {e^{-2\lambda t}\beta k^4}{6}+\frac{e^{-3\lambda t}\gamma k^5}{10}-\frac{2e^{-\lambda t}k^3}{3}\Big)_x=-2k_x{P}.\label{k2}
	\end{align}
Hence
	\begin{align}
	{E}(t)=\Big(\int_{\mathbb{R}} (k^2+k^2_x)(t,x)dx\Big)^{\frac 1 2}=\Big(\int_{\mathbb{R}} (\bar{k}^2+\bar{k}_x^2)(x)dx\Big)^{\frac 1 2}=	{E}_0.\label{E0}
\end{align}
Setting $w=k_x^2,$ the  equation \eqref{k2} yields
\begin{align}\label{w}
	w_t+((e^{{-\lambda }t}k+\Gamma)w)_x=2k_x(e^{{-\lambda }t}k^2-H(k)-P).
\end{align}
For $\bar{k}\in H^1(\mathbb{R}),$ Young's inequality entails that
\begin{align*}
	\|{P}\|_{L^{\infty}}, ~\|{P}_x\|_{L^{\infty}}&\leq C\Big(\|e^{-|x|}\|_{L^1}\|H(k)\|_{L^{\infty}}+\|e^{-|x|}\|_{L^{\infty}}\|H_2(k)\|_{L^{1}}\Big)\notag\\
		&\leq C \Big({E}_0^{\frac 1 2}+{E}_0+ {E}_0^{\frac 3 2}+{E}_0^2\Big),\\
	\|{P}\|_{L^{2}}, ~~~\|{P}_x\|_{L^{2}}&\leq C\Big(\|e^{-|x|}\|_{L^1}\|H(k)\|_{L^{2}}+\|e^{-|x|}\|_{L^{2}}\|H_2(k)\|_{L^{1}}\Big) \notag\\& \leq C\Big({E}_0^{\frac 1 2}+{E}_0+ {E}_0^{\frac 3 2}+{E}_0^2\Big).
\end{align*}
Let us briefly recall the definition of conservative  weak solutions for convenience.
\begin{defi}\label{def1}
	Let $\bar{k}\in H^1(\mathbb{R}).$ Then $k(t,x)\in L^{\infty}(\mathbb{R}^+;H^1(\mathbb{R}) )$ is a conservative weak solution to the Cauchy problem \eqref{k} when $k(t,x)$ satisfies the following equation
	\begin{align}\label{10}
		\int_{\mathbb{R}^+}\int_{\mathbb{R}}\Big(k\psi_t+(\frac{e^{-\lambda t}k^2 }{2}+\Gamma k)\psi_x +{P}_x\psi\Big)(t,x)dxdt+\int_{\mathbb{R}} \bar{k}(x)\psi(0,x)dx=0
	\end{align}	
	for any $\psi\in C_c^{\infty}(\mathbb{R}^+,\mathcal{D}).$
	Moreover, the quantities $\|k\|_{H^1}$ are conserved in time.
\end{defi}
	\begin{defi}\label{def2}
	Let $\bar{k}\in H^1(\mathbb{R}).$ If  $k(t,x)$ is a  conservative weak solution for the Cauchy problem \eqref{k}, such that  the following properties hold:
	
	$(1).$ The funtion $k$ provides a solution for the Cauchy problem \eqref{k} in the sence  of Definition \ref{def1}.
	
	$(2).$ If $w=k_x^2$ provides a distributional solution to the balance law \eqref{w},
	\begin{align}\label{0.6}
		\int_{\mathbb{R}^+} \int_{\mathbb{R}} [k_x^2\phi_t+(e^{{-\lambda }t}k+\Gamma)w\phi_x+2k_x(e^{{-\lambda }t}k^2-H(k)-P)\phi]dxdt+\int_{\mathbb{R}}\bar{k}_x^2(x)\phi(0,x))dxdt=0,
	\end{align}
	for any test function $\phi\in C_c^1({\mathbb{R}}^2).$
\end{defi}
The main theorem of this paper is as follows. 
\begin{theo}\label{th2.3}
	For any initial data $\bar{k}\in H^1(\mathbb{R})$, the Cauchy problem \eqref{k} has a unique global conservative solution in the sense of Definition \ref{def2}.
\end{theo}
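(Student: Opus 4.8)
\emph{Proof strategy.} The plan is to adapt the Bressan--Constantin / Holden--Raynaud framework for conservative solutions of Camassa--Holm type equations to the time-dependent coefficients appearing in \eqref{k}. Existence is obtained by passing to Lagrangian-type variables in which \eqref{k} becomes a \emph{semilinear} system solvable by a contraction argument; global existence comes from the conserved quantity in \eqref{E0}; and one then transforms back to recover a conservative solution in the sense of Definition \ref{def2}. Uniqueness is obtained by running the construction in reverse: any conservative solution generates Lagrangian variables that must solve the same semilinear system, which has unique solutions.

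\textbf{Step 1 (an equivalent semilinear system).} Introduce a new spatial variable $\xi$ via a characteristic map $\xi\mapsto y(t,\xi)$, set $U(t,\xi)=k(t,y(t,\xi))$, and introduce a cumulative energy variable
\[
 H(t,\xi)=\int_{-\infty}^{\xi}\Big(U^2+\frac{U_\xi^2}{y_\xi}\Big)(t,\eta)\,d\eta ,
\]
normalized at $t=0$ so that $y(0,\xi)+H(0,\xi)=\xi$. Since $\Lambda^{-2}=\tfrac12 e^{-|x|}\ast\,\cdot$, the quantities $P\circ y$ and $P_x\circ y$ turn into integrals over $\eta$ of expressions built from $y$, $U$, $y_\xi$ and $H_\xi$. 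Using \eqref{k}, \eqref{kx} and \eqref{w} one then obtains a closed system for $(y,U,H)$ (together with $y_\xi,H_\xi$) of the schematic form $y_t=e^{-\lambda t}U+\Gamma$, $U_t=-(P_x\circ y)$, $H_t=(\cdots)$, whose right-hand side is locally Lipschitz in a Banach space such as $V\times H^1\times\cdots$ with $V$ adapted to $y-\xi$. The time dependence enters only through the smooth bounded factors $e^{-\lambda t},e^{-2\lambda t},e^{-3\lambda t}$ and does not affect the semilinear structure; this is carried out in Sections 3--4.

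\textbf{Step 2 (local, then global, solutions of the system).} Local existence and uniqueness follow from the contraction mapping principle applied to the integral formulation of this system. To globalize, I would use the conservation $E(t)=E_0$ from \eqref{E0} (equivalently $\|k\|_{H^1}=E_0$, $H(t,+\infty)\equiv E_0^2$) to bound $\|U\|_{L^\infty}$, $\|P\circ y\|_{L^\infty}$, $\|P_x\circ y\|_{L^\infty}$ and the remaining norms on any finite time interval, and then a Gronwall argument rules out finite-time blow-up. The higher-order nonlinearities $\beta e^{-2\lambda t}k^3$, $\gamma e^{-3\lambda t}k^4$ are harmless since their coefficients are bounded (indeed decaying), whereas the linear term $(\alpha+\Gamma)k$ inside $H(k)$ must be handled by the change-of-variables device taken from \cite{LuoznMOCH}, precisely in order to control $\|\Lambda^{-2}\partial_x((\alpha+\Gamma)k)\|_{L^\infty}$ uniformly, as flagged in the Introduction.

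\textbf{Step 3 (back to $k$; existence).} From the global solution $(y,U,H)$ define $k(t,x)=U(t,\xi)$ for any $\xi$ with $y(t,\xi)=x$. One checks that $\xi\mapsto y(t,\xi)$ is nondecreasing and surjective, that $k$ is well defined for a.e.\ $x$, that $k(t,\cdot)\in H^1(\mathbb{R})$ with $\|k(t)\|_{H^1}=E_0$, and — by changing variables back from $\xi$ to $x$ in the weak form of the ODE system — that $k$ satisfies \eqref{10} and that $w=k_x^2$ satisfies the balance law \eqref{0.6}. This yields a conservative solution in the sense of Definition \ref{def2}; the peakon case (Section 5) is the special instance in which all integrals are explicit.

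\textbf{Step 4 (uniqueness) and the main obstacle.} Conversely, given any conservative solution $k$ in the sense of Definition \ref{def2}, build the Lagrangian variables directly: solve $\dot y(t,\xi)=e^{-\lambda t}k(t,y(t,\xi))+\Gamma$ (well posed since $k(t,\cdot)\in H^1\subset C(\mathbb{R})$), set $U=k\circ y$, and define $H$ from the energy. Using the distributional identities \eqref{10} and \eqref{0.6} one verifies that $(y,U,H)$ solves the same semilinear system with the same data $\bar k$; by the uniqueness of Step 2 these variables are uniquely determined, and projecting back to the $x$ variable forces $k$ to coincide with the solution of Step 3. The delicate point is exactly this step: one must show that along almost every characteristic $t\mapsto U(t,\xi)$ is absolutely continuous and satisfies $U_t=-(P_x\circ y)$ pointwise — extra regularity of a merely $H^1$-valued weak solution along characteristics — and control the set of times and points where characteristics merge (where $y_\xi$ vanishes), so that $H$ still evolves as dictated by \eqref{0.6}. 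Since the map $x\leftrightarrow\xi$ is not bijective, the matching between the Eulerian and Lagrangian pictures must be done with care, as in \cite{Bressan2015character,Holden07lag}; compared to the classical Camassa--Holm equation the only additional bookkeeping is the time-weighted factors and the linear term, handled as in Step 2.
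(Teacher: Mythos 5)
Your proposal follows essentially the same route as the paper: existence by passing to Lagrangian variables in which \eqref{k} becomes a semilinear ODE system in a Banach space (local solution by contraction, globalized via the conserved $H^1$ energy together with the change-of-variables device of \cite{LuoznMOCH} for the term $(\alpha+\Gamma)k$), then transforming back to obtain a solution in the sense of Definition \ref{def2}, and uniqueness via the characteristics argument in the style of \cite{Bressan2015character}, exactly as in Sections 3--6. The only cosmetic difference is your choice of the cumulative energy variable $H$ (Bressan--Constantin style), whereas the paper works with the pointwise quantities $V=\frac{k_x^2\circ y}{1+k_x^2\circ y}$, $W=\frac{k_x\circ y}{1+k_x^2\circ y}$, $Q=(1+k_x^2\circ y)\,y_{\xi}$ (Holden--Raynaud style); the two formulations are equivalent.
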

\textbf{Notations.} In Section 6, in order not to be ambiguous, we assume that $k(t,-\infty)=0,$ it follows that  $\int_{-\infty}^{y(t)}k_xdx=k(t,y(t)).$
\section{Global solutions in Lagrange coordinates}
\par

$~~$This section is devoted to getting a system equivalent to \eqref{k} by introducing a coordinate transformation into Lagrange coordinates, and to proving the existence of globally conservative solutions.
\subsection{An equivalent system}
\par
$~~$Given $\bar{k}\in H^1(\mathbb{R})$ be the initial data and   a new variable $\xi\in \mathbb{R}$. Define the nondecreasing map $\xi\mapsto \bar{y}(t,\xi)$ via the following equation
\begin{align}\label{y0}
	\int_0^{\bar{y}(\xi)}\bar{k}_x^2dx+\bar{y}=\xi.
\end{align}
Let $k=k(t,x)\in H^1(\mathbb{R})$  be the solution of equation \eqref{k} and the characteristic $y(t,\xi): t\mapsto y(t,\cdot)$  as the solutions of
	\begin{equation}
	\left\{\begin{aligned}
		&y_t(t,\xi)=e^{-\lambda  t}k(t,y(t))+\Gamma,\\
		&y(0,\xi)=\bar{y}.	
	\end{aligned} \right.\label{y}
\end{equation}
Our new variables are
\begin{align}\label{l}
	K(t,\xi)=k(t,y(t,\xi)),\quad V(t,\xi)=\frac{k_x^2\circ y}{1+k_x^2\circ y},\
	W(t,\xi)=\frac{k_x\circ y}{1+k_x^2\circ y},\quad Q(t,\xi)=({1+k_x^2\circ y})\cdot  y_{\xi},
\end{align}
From \eqref{y}-\eqref{l}, we deduce that
 \begin{align}\label{P}
	{P}(t,\xi)=\frac{1}{2}	\int_{-\infty}^{\infty}e^{-|y(t,\xi)-x|}\Big(-H(K)Q(1-V)+e^{{-\lambda }t}K^2Q(1-V)+ \frac  {e^{{-\lambda }t}} {2}QV\Big)(\eta)d\eta,
\end{align}
and $G(t,\xi)\triangleq P_x(t,y),$
\begin{align}\label{G}
	G(t,\xi)=-\frac{1}{2} \int_{\mathbb{R}} {\rm sgn} (y(t,\xi)-x)e^{-|y(t,\xi)-x|}\Big(-H(K)Q(1-V)+e^{{-\lambda }t}K^2Q(1-V)+ \frac  {e^{{-\lambda }t}} {2}QV\Big)(\eta)d\eta.
\end{align}
Noting that the fact the $y(t,\cdot)$ is an increasing function
and letting $x=y(t,\eta),$ we infer that
\begin{align}
	P(t,\xi)&=\frac{1}{2}\int_{\mathbb{R}} {\rm sgn} (\xi-\eta)e^{-|\int_{{\eta}}^{{\xi}}Q(1-V)(s)ds|}\Big(-H(K)Q(1-V)+e^{{-\lambda }t}K^2Q(1-V)+ \frac  {e^{{-\lambda }t}} {2}QV\Big)(\eta)d\eta,\label{P1}\\
	G(t,\xi)&=-\frac{1}{2}\int_{\mathbb{R}} {\rm sgn} (\xi-\eta)e^{-|\int_{{\eta}}^{{\xi}}Q(1-V)(s)ds|}\Big(-H(K)Q(1-V)+e^{{-\lambda }t}K^2Q(1-V)+ \frac  {e^{{-\lambda }t}} {2}QV\Big)(\eta)d\eta,\label{G1}
\end{align}
where the index  $t$ is omited. Now, giving another variable $Z(t,\xi)$  defined as $Z(t,\xi)=y(t,\xi)-\xi-\Gamma t,$ we obtain 
\begin{equation}
	\left\{\begin{aligned}
		&Z_t(t,\xi)=e^{-\lambda  t}K(t,\xi),\\
		&Z(0,\xi)=\bar{y}(\xi).	
	\end{aligned} \right.\label{Z}
\end{equation}
Hence, the derivatives of $G$ and $P$ are given by
	\begin{align}
	G_{\xi}(t,\xi)&=-e^{{-\lambda }t}K^2Q(1-V)-\frac {e^{{-\lambda }t}} {2}QV+H(K)Q(1-V)
	+{P}(1+Z_{\xi}),\label{Gx}\\
	{P}_{\xi}(t,\xi)&=G(1+Z_{\xi}).\label{PX}
\end{align}
Combining \eqref{y}-\eqref{l} and \eqref{P1}-\eqref{G1}, we obtain
	\begin{equation}
	\left\{\begin{aligned}
		y_t~&=e^{{-\lambda }t}K+\Gamma,\\
		K_t&=-G,\\
		V_t~&=2W\Big(e^{{-\lambda }t}K^2(1-V)-H(K)(1-V)-\frac {e^{{-\lambda }t}} {2}V-{P}(1-V)\Big),\\
		W_t&=(1-2V)\Big(e^{{-\lambda }t}K^2(1-V)-H(K)(1-V)-\frac {e^{{-\lambda }t}} {2}V-{P}(1-V)\Big),\\
		Q_t&=2WQ\Big(\frac{e^{{-\lambda }t}}{2}+e^{{-\lambda }t}K^2-H(K)-{P}\Big).\\
	\end{aligned} \right.\label{Ky}
\end{equation}
Differentiating \eqref{Ky} yields
	\begin{equation}
	\left\{\begin{aligned}
		y_{\xi t}~&=e^{{-\lambda }t}K_{\xi},\\
		K_	{t\xi }&=e^{{-\lambda }t}K^2Q(1-V)+\frac {e^{{-\lambda }t}} {2}QV-H(K)Q(1-V)-{P}(1+Z_{\xi}),\\
		V_t~~&=2W\Big(e^{{-\lambda }t}K^2(1-V)-H(K)(1-V)-\frac {e^{{-\lambda }t}} {2}V-{P}(1-V)\Big),\\
	  W_t~&=(1-2V)\Big(e^{{-\lambda }t}K^2(1-V)-H(K)(1-V)-\frac  {e^{{-\lambda }t}} {2}V-{P}(1-V)\Big),\\
	  Q_t~&=2WQ\Big(\frac{e^{{-\lambda }t}}{2}+e^{{-\lambda }t}K^2-H(K)-{P}\Big).\\
	\end{aligned}\right.\label{KX}
\end{equation}
\subsection{Global weak solutions of the equivalent system}
\par
$~~$This subsection is devoted to the proof of global solution of an equivalent semi-linear system \eqref{Ky}. Let $\bar{k}\in H^1(\mathbb{R}).$
By   \eqref{Z}, the  system \eqref{Ky} is equivalent to
\begin{equation}
	\left\{\begin{aligned}
		Z_t~&=e^{{-\lambda }t}K,\\
		K_t~&=-G,\\
		V_t~~&=2W\Big(e^{{-\lambda }t}K^2(1-V)-H(K)(1-V)-\frac {e^{{-\lambda }t}} {2}V-{P}(1-V)\Big),\\
		W_t~&=(1-2V)\Big(e^{{-\lambda }t}K^2(1-V)-H(K)(1-V)-\frac {e^{{-\lambda }t}} {2}V-{P}(1-V)\Big),\\
		Q_t~&=2WQ\Big(\frac{e^{{-\lambda }t}}{2}+e^{{-\lambda }t}K^2-H(K)-{P}\Big).\\
	\end{aligned} \right.\label{KZ}
\end{equation}
Moreover, we have
	\begin{equation}
	\left\{\begin{aligned}
		Z_{\xi t}&=e^{{-\lambda }t}K_{\xi},\\
		K_	{t\xi }&=e^{{-\lambda }t}K^2Q(1-V)+\frac {e^{{-\lambda }t}} {2}QV-H(K)Q(1-V)-{P}(1+Z_{\xi}),\\
		V_t~&=2W\Big(e^{{-\lambda }t}K^2(1-V)-H(K)(1-V)-\frac {e^{{-\lambda }t}} {2}V-{P}(1-V)\Big),\\
		W_t&=(1-2V)\Big(e^{{-\lambda }t}K^2(1-V)-H(K)(1-V)-\frac  {e^{{-\lambda }t}} {2}V-{P}(1-V)\Big),\\
		Q_t&=2WQ\Big(\frac{e^{{-\lambda }t}}{2}+e^{{-\lambda }t}K^2-H(K)-{P}\Big).\\
	\end{aligned}\right.\label{KzX}
\end{equation}
Hence, we get the following  initial data $(\bar{y}, \bar{K},\bar{V},\bar{W},\bar{Q})$ 
\begin{equation}
	\left\{\begin{aligned}
		\int_{0}^{\bar{y}}\bar{k}_x^2&dx+\bar{y}(\xi)=\xi,\\
		\bar{K}(\xi)&=\bar{k}\circ \bar{y}(\xi),\\
		\bar{V}(\xi)&=\frac{\bar{k}_x^2\circ \bar{y}}{1+\bar{k}_x^2\circ \bar{y}(\xi)},\\
		\bar{W}(\xi)&=\frac{\bar{k}_x\circ \bar{y}(\xi)}{1+\bar{k}_x^2\circ \bar{y}(\xi)},\\
		\bar{Q}(\xi)&=(1+\bar{k}_x^2\circ \bar{y})\bar{y}_{\xi}(\xi)=1.\\
	\end{aligned}\right.\label{K0}
\end{equation}
We will prove that the system \eqref{KZ} is  a well-posed system of oridinary differential equations in the Banach space $\Omega$ where
$$\Omega=H^1\cap W^{1,\infty}\times H^1\cap W^{1,\infty}\times  L^{2}\cap L^{\infty}\times L^{2}\cap L^{\infty}\times  L^{\infty}.$$
For any $X=(Z, K, V,W, Q)\in \Omega,$
 the norm on $\Omega$ is given by $$\|X\|_{\Omega}=\|Z\|_{{H^1}\cap W^{1,\infty}}\times\|K\|_{{H^1}\cap W^{1,\infty}} \times\|V\|_{{L^2}\cap L^{\infty}}\times\|W\|_{{L^2}\cap L^{\infty}}\times\|Q\|_{L^{\infty}}.$$
Indeed, for $\bar{k}\in H^1(\mathbb{R}),$ we can see that $\bar{X}=(\bar{Z},\bar{K},\bar{V},\bar{W},\bar{Q})\in \Omega,$  we thus get $ K\in W^{1,\infty}.$ In the process of proving existence, it is not necessary for $K\in W^{1,
\infty},$ which is used to prove uniqueness.

Before providing our main results in this paper, we first give the following lemmas.
\begin{lemm}\label{PG}
		Let $X=(Z, K, V, W, Q)\in \Omega,$ we define the maps $P$ and $G$ as ${P}(X):=P$ and $G(X):={P}_x\circ y$ where $P$ and $G$ are given by \eqref{P}-\eqref{G}. Then, ${P}$ and $G$ are Lipschitz maps on bounds sets from $\Omega$ to  $H^1\cap W^{1,\infty}$. Moreover, \eqref{Gx}-\eqref{PX}  hold.
\end{lemm}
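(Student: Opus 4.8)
\emph{Proof strategy.} The plan is to work throughout in the Lagrangian variable, to use the explicit representations \eqref{P1}--\eqref{G1}, and to reduce everything to convolution-type estimates against the kernel
\[
\Theta_X(\xi,\eta):=\exp\!\Big(-\Big|\textstyle\int_\eta^\xi Q(1-V)(s)\,ds\Big|\Big),
\]
which satisfies $\Theta_X\le1$, exploiting the structural identities $1+Z_\xi=y_\xi=Q(1-V)$ (from \eqref{Z} and \eqref{l}) together with the pointwise bounds $0\le V\le1$ and $0<c\le Q\le C$ available on the bounded admissible subsets of $\Omega$ under consideration. I set $R(X):=-H(K)Q(1-V)+e^{-\lambda t}K^2Q(1-V)+\tfrac{e^{-\lambda t}}{2}QV$, so that $P(X)(\xi)=\tfrac12\int_{\mathbb R}\Theta_X(\xi,\eta)R(X)(\eta)\,d\eta$ and $G(X)(\xi)=-\tfrac12\int_{\mathbb R}\mathrm{sgn}(\xi-\eta)\Theta_X(\xi,\eta)R(X)(\eta)\,d\eta$.

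\emph{Step 1 (boundedness).} Since $\eta\mapsto\int_0^\eta Q(1-V)$ is nondecreasing, the substitution $x=y(\eta)$, i.e. $dx=Q(1-V)\,d\eta$, turns the two terms of $R$ carrying the factor $Q(1-V)$ into $-H(k)+e^{-\lambda t}k^2$ integrated in $x$, and the term $\tfrac{e^{-\lambda t}}{2}QV$ into a multiple of $k_x^2$ in $x$. The $L^1_x$-norms of the cubic, quartic and quadratic pieces ($k^3,k^4,k^2,k_x^2$) are controlled by $\|k\|_{L^\infty}$ and $\|k\|_{H^1}$, hence by $\|X\|_\Omega$; convolving with $e^{-|\cdot|}\in L^1\cap L^2$ (Young) gives the $L^\infty$ and $L^2$ bounds on these parts of $P$ and $G$. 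The exceptional term is the linear contribution $(\alpha+\Gamma)k$ of $-H(k)$, for which $k\notin L^1(\mathbb R)$ in general; following \cite{LuoznMOCH} I would not push this term through the change of variables but keep it as $\tfrac12\,e^{-|y(\xi)-\cdot|}\ast\big((\alpha+\Gamma)k\big)$ and estimate it by $\tfrac12|\alpha+\Gamma|\,\|e^{-|\cdot|}\|_{L^2}\|k\|_{L^2}$ for the $L^\infty$ bound and by $\|f\ast g\|_{L^2}\le\|f\|_{L^1}\|g\|_{L^2}$ for the $L^2$ bound, where $\|k\|_{L^2}^2=\int K^2Q(1-V)\,d\xi\le\|Q\|_{L^\infty}\|K\|_{L^2}^2$. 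Together with the identities in Step 2, this yields $P(X),G(X)\in H^1\cap W^{1,\infty}$ with norms bounded on bounded subsets of $\Omega$.

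\emph{Step 2 (the identities \eqref{Gx}--\eqref{PX}).} These follow by differentiating \eqref{P1}--\eqref{G1} in $\xi$. Differentiating the exponent gives a factor $-\mathrm{sgn}(\xi-\eta)Q(1-V)(\xi)$, and since $Q(1-V)\ge0$ one has $\mathrm{sgn}(\xi-\eta)=\mathrm{sgn}\big(\int_\eta^\xi Q(1-V)\big)$, so this interchanges the $P$- and $G$-type integrals and produces the factor $Q(1-V)(\xi)=1+Z_\xi$; differentiating the $\mathrm{sgn}$ in \eqref{G1} produces a Dirac mass which, since $\Theta_X(\xi,\xi)=1$, contributes $-R(X)(\xi)$. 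Collecting the terms gives exactly \eqref{PX} and \eqref{Gx}, and hence $\|P_\xi\|_{L^2}\le\|G\|_{L^2}\|1+Z_\xi\|_{L^\infty}$, $\|P_\xi\|_{L^\infty}\le\|G\|_{L^\infty}(1+\|Z_\xi\|_{L^\infty})$, and similarly for $G_\xi$.

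\emph{Step 3 (Lipschitz estimate --- the main obstacle).} For $X,\tilde X$ in a bounded set, split $P(X)-P(\tilde X)=\tfrac12\int(\Theta_X-\Theta_{\tilde X})R(X)+\tfrac12\int\Theta_{\tilde X}(R(X)-R(\tilde X))$, and likewise for $G$. The second integral is routine: $R(X)-R(\tilde X)$ is a polynomial expression in $(K,V,Q)$ and their tilded counterparts, hence Lipschitz on bounded sets, and Step 1's convolution estimates (again keeping the linear term in the $e^{-|\cdot|}\ast(\,\cdot\,)$ form) apply. The delicate point is the kernel difference: with $a:=|\int_\eta^\xi Q(1-V)|=|y(\xi)-y(\eta)|$ and $\tilde a$ the analogue for $\tilde X$, one has
\[
|a-\tilde a|\le\big|(Z(\xi)-Z(\eta))-(\tilde Z(\xi)-\tilde Z(\eta))\big|\le 2\|Z-\tilde Z\|_{L^\infty},
\]
which is \emph{uniform in $(\xi,\eta)$} precisely because $Z,\tilde Z\in W^{1,\infty}$ are bounded; hence, by the mean value theorem, $|\Theta_X-\Theta_{\tilde X}|=|e^{-a}-e^{-\tilde a}|\le e^{-\min(a,\tilde a)}|a-\tilde a|\le e^{2\|Z-\tilde Z\|_{L^\infty}}\,e^{-a}\cdot 2\|Z-\tilde Z\|_{L^\infty}\le C\,\|Z-\tilde Z\|_{L^\infty}\,\Theta_X(\xi,\eta)$, so the kernel difference is dominated by $\|Z-\tilde Z\|_{L^\infty}$ times a kernel of the same type as $\Theta_X$, and Step 1 closes the estimate. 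The Lipschitz bounds for $P_\xi,G_\xi$ then follow from \eqref{Gx}--\eqref{PX}, $\|fg\|_{L^2}\le\|f\|_{L^2}\|g\|_{L^\infty}$, and the bounds already obtained. I expect the genuine difficulty to lie entirely in Step 3 --- keeping every exponent matched so that one truly recovers a $\Theta$-type kernel after the kernel-difference step, and treating the linear term $(\alpha+\Gamma)k$ consistently throughout rather than letting it pass through the Lagrangian change of variables.
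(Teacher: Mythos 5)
Your proposal contains a genuine gap: throughout Steps~1 and~3 you invoke structural identities that hold only \emph{along solutions} of the system \eqref{KZ}, not for arbitrary elements of the bounded set $\Omega_M$ on which the lemma must be proved. Specifically, the change of variables $x=y(\eta)$, $dx=Q(1-V)\,d\eta$ in Step~1 (which lets you speak of an Eulerian function $k$ with $k^2,k_x^2\in L^1$), the bounds $0\le V\le 1$ and $0<c\le Q$, and above all the Step~3 estimate $|a-\tilde a|\le |(Z(\xi)-Z(\eta))-(\tilde Z(\xi)-\tilde Z(\eta))|\le 2\|Z-\tilde Z\|_{L^\infty}$ all rest on $1+Z_\xi=y_\xi=Q(1-V)$ and on the existence of an $H^1$ Eulerian representative. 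These are exactly the relations established \emph{afterwards} in Lemma \ref{lemma2} for solutions of \eqref{KZ}; an arbitrary $X\in\Omega_M$ need not satisfy any of them (in $\Omega_M$ there is no lower bound on $Q$, no constraint $V\le1$, and no link between $Z$ and $\int Q(1-V)$). Since the whole point of the lemma is to verify the hypotheses of the Picard theorem used in Theorem \ref{local existence}, using solution-only identities here is circular: your argument proves Lipschitz continuity only on an ``admissible'' subset, and then the ODE existence argument would have to be redone on that subset. Note also that your kernel $\Theta_X$, defined with exponent $\int_\eta^\xi Q(1-V)$, is merely bounded by $1$ for general $X\in\Omega_M$ (the exponent can vanish on large sets), so even the well-definedness/boundedness of $P(X),G(X)$ does not follow without extra structure; the uniform bound $|a-\tilde a|\le2\|Z-\tilde Z\|_{L^\infty}$ simply does not hold for that kernel when $X,\tilde X$ are general, because one must instead estimate $\int_{[\eta,\xi]}|Q(1-V)-\tilde Q(1-\tilde V)|$, which grows with $|\xi-\eta|$ and has to be absorbed by a kernel decay that is itself unavailable.

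For comparison, the paper never leaves the Lagrangian variable in this lemma: Step~1 is a direct Young-type estimate on \eqref{P1}--\eqref{G1} in terms of $\|H(K)\|_{L^2\cap L^\infty}$, $\|K^2\|_{L^2}$, $\|Q(1-V)\|_{L^\infty}$, $\|V\|_{L^2}$, and the Lipschitz property (your Step~3) is not argued at all but deferred to \cite{YE2021A}; the Eulerian change-of-variables trick you import is used by the paper only later, in Theorem \ref{global}, where $X$ \emph{is} a solution and Corollary \ref{coro} applies. If you want a self-contained argument valid on all of $\Omega_M$, the natural repair is to define the exponent through $Z$ itself, i.e. use $|y(\xi)-y(\eta)|$ with $y=\xi+\Gamma t+Z$: then $|y(\xi)-y(\eta)|\ge|\xi-\eta|-2\|Z\|_{L^\infty}$ gives genuine kernel decay on $\Omega_M$, your difference bound $2\|Z-\tilde Z\|_{L^\infty}$ becomes legitimate, and the linear term $(\alpha+\Gamma)K\,Q(1-V)\in L^2$ is handled by Young's inequality directly in $\xi$ with no Eulerian detour --- but then you must use that formula consistently in place of $\Theta_X$, and verify it coincides with \eqref{P1}--\eqref{G1} on solutions. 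Your Step~2 (deriving \eqref{Gx}--\eqref{PX} by differentiating, with the Dirac contribution from the sign factor) is fine and is indeed what the paper tacitly assumes.
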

\begin{proof}
	Let $\Omega_M$ is a bounded subsets of $\Omega,$ which is defined  as  $$\Omega_M=\{X=(Z, K,V, W, Q)\in \Omega| ~\|X\|_{\Omega}\leq M\}.$$
	~~\textbf{Step 1:}  ${P}$ and $G$ are maps from $\Omega_{M}$ to $H^1\cap W^{1,\infty}$.
	Combining \eqref{P}-\eqref{G} with Young's inequality, we have
		\begin{align*}
		\|{P}(X)\|_{L^{2}}, ~\|G(X)\|_{ L^{2}}&\leq C\|e^{-|x|}\|_{L^1}\Big(\|H(K)\|_{L^{2}\cap L^{\infty}}\|Q(1-V)\|_{L^{\infty}}+e^{{-\lambda }t}\|K^2\|_{L^{2}}\|Q(1-V)\|_{L^{\infty}}\notag\\
		&~~+e^{{-\lambda }t}\|Q\|_{L^{\infty}}\|V\|_{L^{2}}\Big)\leq CM,\\
			\|{P}(X)\|_{L^{\infty}}, ~\|G(X)\|_{ L^{\infty}}&\leq C\|e^{-|x|}\|_{L^1}\Big(\|H(K)\|_{L^{\infty}}\|Q(1-V)\|_{L^{\infty}}+e^{{-\lambda }t}\|K^2\|_{L^{\infty}}\|Q(1-V)\|_{L^{\infty}}\notag\\
		&~~+e^{{-\lambda }t}\|Q\|_{L^{\infty}}\|V\|_{L^{\infty}}\Big)\leq CM.
	\end{align*}
	Similarly, we obtain
	\begin{align*}
		\|P_{\xi}(X)\|_{L^{2}\cap L^{\infty}},~\|G_{\xi}(X)\|_{L^{2}\cap L^{\infty}}\leq CM.
	\end{align*}
~~\textbf{Step 2:}  $P$ and $G$ are Lipschitz maps from $\Omega_{M}$ to $H^1\cap W^{1,\infty}$.
	For $X=(Z, K, W, Q, V)$ and $\tilde{X}=(\tilde{Z}, \tilde{K}, \tilde{V}, \tilde{W}, \tilde{Q})$ be two elements in $\Omega_{M}.$  According to \cite{YE2021A}, we deduce that
\begin{align*}
		\|P(X)-P(\tilde{X})\|_{H^1\cap W^{1,\infty}},~\|G(X)-G(\tilde{X})\|_{H^1\cap W^{1,\infty}}\leq C\|X-\tilde{X}\|_{\Omega}.
\end{align*}
Combining Step 1 and Step 2, we  finish the proof of Lemma \ref{PG}.
\end{proof}
\begin{lemm}\label{lemma2}
	Let $X=(Z, K, V, W, Q)\in \Omega$ be a solution the system \eqref{KZ}. Then, for almost everywhere $\xi\in\mathbb{R},$ we have
	\begin{align}
		&W^2+V^2=V,\label{W}\\
		&y_{\xi}~=Q(1-V),\label{Q}\\
		&K_{\xi}=WQ.\label{K1}
\end{align}
Moreover, we have
\begin{align}\label{new conser}
	\frac{d}{dt}\tilde{E}(t) =\frac{d}{dt}\int_{\mathbb{R}}(K^2Q(1-V)+QV)(t,\xi)d\xi=0.
\end{align}
\end{lemm}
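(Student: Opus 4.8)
The plan is to verify the three algebraic identities \eqref{W}, \eqref{Q}, \eqref{K1} first, since \eqref{new conser} will follow by differentiating the conserved quantity and substituting the evolution equations from \eqref{KZ} together with these identities. For \eqref{W}, I would check that $W^2 + V^2 - V$ vanishes at $t=0$ — this is immediate from the definitions \eqref{K0}, since with $a = \bar k_x^2 \circ \bar y$ one has $\bar W^2 + \bar V^2 = \frac{a}{(1+a)^2} + \frac{a^2}{(1+a)^2} = \frac{a(1+a)}{(1+a)^2} = \frac{a}{1+a} = \bar V$ — and then show $\frac{d}{dt}(W^2 + V^2 - V) = 0$ along solutions of \eqref{KZ}. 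Writing $R := e^{-\lambda t}K^2(1-V) - H(K)(1-V) - \frac{e^{-\lambda t}}{2}V - P(1-V)$ for the common factor appearing in $V_t$ and $W_t$, we have $V_t = 2WR$ and $W_t = (1-2V)R$, so $\frac{d}{dt}(W^2+V^2-V) = 2W(1-2V)R + 2V\cdot 2WR - 2WR = 2WR\big[(1-2V) + 2V - 1\big] = 0$. Hence \eqref{W} holds for all $t$.

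For \eqref{Q} and \eqref{K1}, I would similarly argue by checking the $t=0$ values against \eqref{K0} (at $t=0$, $\bar Q(1-\bar V) = (1+a)\bar y_\xi \cdot \frac{1}{1+a} = \bar y_\xi$, which matches $\bar y_\xi = y_\xi|_{t=0}$; and $\bar W \bar Q = \frac{\bar k_x \circ \bar y}{1+a}(1+a)\bar y_\xi = (\bar k_x \circ \bar y)\bar y_\xi = \bar K_\xi$) and then showing the time derivatives of both sides agree. For \eqref{Q}: differentiating $Q(1-V)$ in $t$ gives $Q_t(1-V) - QV_t = 2WQ\big(\frac{e^{-\lambda t}}{2} + e^{-\lambda t}K^2 - H(K) - P\big)(1-V) - 2WQR$; one checks this equals $e^{-\lambda t}K_\xi = e^{-\lambda t}WQ$ (using $y_{\xi t} = e^{-\lambda t}K_\xi$ from \eqref{KzX} and $K_\xi = WQ$) after expanding $R$ and using $W^2 + V^2 = V$ to simplify the $(1-V)$ factors; the algebra reduces, after cancellations, to the single surviving term $e^{-\lambda t}WQ$. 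For \eqref{K1}: differentiate $WQ$ in $t$, use $W_t = (1-2V)R$ and $Q_t = 2WQ(\tfrac{e^{-\lambda t}}{2} + e^{-\lambda t}K^2 - H(K) - P)$, and compare with $K_{t\xi}$ from \eqref{KzX}, again invoking \eqref{W} to collapse the quadratic-in-$(V,W)$ terms.

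With \eqref{W}–\eqref{K1} in hand, \eqref{new conser} is a direct computation: $\frac{d}{dt}\int_{\mathbb R}(K^2 Q(1-V) + QV)\,d\xi = \int_{\mathbb R} \big[2KK_t Q(1-V) + K^2(Q(1-V))_t + (QV)_t\big]d\xi$. Substituting $K_t = -G$, $(Q(1-V))_t = e^{-\lambda t}WQ$ (just established, $= e^{-\lambda t}K_\xi$), and computing $(QV)_t = Q_t V + Q V_t$ with the formulas above, the integrand should reorganize — using $K_\xi = WQ$ and $y_\xi = Q(1-V)$ to pass between $\xi$- and $x$-type expressions — into a total $\xi$-derivative plus terms that cancel. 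Concretely I expect the surviving non-divergence terms to assemble into something proportional to $\int K_\xi (\cdots)$ that integrates by parts against the relations \eqref{Gx}–\eqref{PX} for $G_\xi$ and $P_\xi$; this mirrors the conservation of $E(t)$ in \eqref{E0}, of which \eqref{new conser} is just the Lagrangian restatement (indeed $\tilde E(t)^2$ should equal $E(t)^2 = E_0^2$ under the change of variables $x = y(t,\xi)$, since $K^2 Q(1-V) = k^2\circ y \cdot y_\xi$ and $QV = k_x^2\circ y \cdot y_\xi$).

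The main obstacle I anticipate is the bookkeeping in verifying \eqref{Q} and \eqref{K1}: the right-hand sides of \eqref{KZ} contain the factor $R$ and the factor $\frac{e^{-\lambda t}}{2} + e^{-\lambda t}K^2 - H(K) - P$, which differ precisely by how the $V$-weights are distributed, and getting the cancellations to work requires careful and repeated use of the quadratic identity $W^2 + V^2 = V$ (equivalently $W^2 = V(1-V)$) at exactly the right moments. Once that identity is exploited correctly the terms telescope, but a naive expansion produces many terms and it is easy to mismatch signs — in particular the sign conventions around $H(K)$ (note $H$ already carries a minus sign in its definition) and around $G = P_x \circ y$ need to be tracked with care. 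The conservation identity \eqref{new conser} itself is then comparatively routine, being essentially the image of \eqref{E0} under the Lagrangian change of variables.
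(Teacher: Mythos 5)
Your proposal follows essentially the same route as the paper: check \eqref{W}--\eqref{K1} at $t=0$ from the initial data \eqref{K0}, propagate them in time by differentiating along the semilinear system (your computation $\frac{d}{dt}(W^2+V^2-V)=2WR\,[(1-2V)+2V-1]=0$ is exactly the paper's), and obtain \eqref{new conser} by substituting $K_t=-G$, $(QV)_t=2WQ(e^{-\lambda t}K^2-H(K)-P)$ and reducing the integrand to a total $\xi$-derivative via $K_\xi=WQ$, $y_\xi=Q(1-V)$ and $P_\xi=Gy_\xi$. The only caveat, shared equally by the paper, is that the propagation of \eqref{Q} and \eqref{K1} is cleanest when phrased as a linear homogeneous ODE system for the differences $y_\xi-Q(1-V)$ and $K_\xi-WQ$ (using \eqref{W}), rather than invoking \eqref{K1} while proving \eqref{Q}; otherwise the argument is correct.
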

\begin{proof}
	Combining \eqref{Ky}-\eqref{KX}, we deduce that
	\begin{align*}
		(W^2+V^2)_t=V_t,
	\end{align*}
from which it follows
	\begin{align}\label{w1}
	(W^2+V^2)_t=&2WW_t+2VV_t\notag\\
	=&2W(2V-1)\Big(e^{{-\lambda }t}K^2(1-V)-H(K)(1-V)-\frac {e^{{-\lambda }t}} {2}V-{P}(1-V)\Big) \notag\\
	&+4VW\Big(e^{{-\lambda }t}K^2(1-V)-H(K)(1-V)-\frac {e^{{-\lambda }t}} {2}V-{P}(1-V)\Big)=V_t.
\end{align}
By the same token, we deduce that
	\begin{align}
	&y_{\xi t}=(Q(1-V))_t,\label{w2}\\
	&K_{\xi t}=(WQ)_t.\label{w3}
\end{align}
As $t=0,$   \eqref{w1}-\eqref{w3} remains hold.  Hence, we arrive at \eqref{W}-\eqref{K1}. Moreover, it follows  from  \eqref{W}-\eqref{K1} that
\begin{align}\label{VW}
	0\leq V\leq 1,~~~|W|\leq \frac 1 2.
\end{align}
Thereby, $V(t,\xi)$ and $W(t,\xi)$  are uniformly bounded in $L^{\infty}([0,T];\mathbb{R}).$

We now turn to prove \eqref{new conser}. Using the fact that $\int_{\mathbb{R}}(QV)_t(t,\xi)d\xi=2\int_{\mathbb{R}}WQP(t,\xi)d\xi=-2\int_{\mathbb{R}}2KQ(1-V)G(t,\xi)d\xi$ and \eqref{KZ}, we deduce that
\begin{align*}
\frac{d}{dt}\tilde{E}(t)
=&\int_{\mathbb{R}}\Big(2K_tKQ(1-V)+K^2Q_t-K^2(QV)_t+(QV)_t\Big)(t,\xi)d\xi\notag\\
=&\int_{\mathbb{R}}\Big(GQ(1-V)+2KWQP-2KWQP\Big)(t,\xi)d\xi\notag\\
=&\int_{\mathbb{R}}G\cdot y_{\xi}(t,\xi)d\xi=\int_{\mathbb{R}}P_{\xi}(t,\xi)d\xi=0.
\end{align*}
This means 
\begin{align}\label{lar conser}
	\tilde{E}(t)
	=\int K^2Q(1-V)+QV)(t,\xi)d\xi =	\tilde{E}(0)\triangleq \tilde{E}_0=E^2_0.
\end{align}
 Thus, the conservative law $\eqref{E0}$ in the new variables remains constant in time.
\end{proof}
The following lemma and corollary which we have learned from \cite{LuoznMOCH} are essential.
	\begin{lemm}\cite{ZMQ}\label{lemma3}
	Assume that $g(x)$ is differentiable on a.e. $[a,b],$ $f(x)\in L^1[c,d],$ and $g([a,b]) \subset [c,d].$ Then we have that $F(g(t))$ is absolutely continuous on $[a,b]$ if and only of $f(g(t))g'(t)\in L^1[c,d]$ and $\int_{g(a)}^{g(b)}f(x)dx=\int_{a}^b f(g(t))g'(t)dt$  with $F(x)=\int_c^x f(t)dt.$
\end{lemm}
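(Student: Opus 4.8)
My plan is to read this as a classical change-of-variables theorem for the Lebesgue integral and to reduce it to three standard real-analysis tools: the Lebesgue differentiation theorem, the Luzin $(N)$ property of absolutely continuous functions, and a ``zero derivative on the preimage of a null set'' lemma. Throughout I write $F(x)=\int_c^x f(t)\,dt$, which is AC on $[c,d]$ with $F'=f$ a.e.; I let $N\subseteq[c,d]$ be the null set on which $F'$ either fails to exist or differs from $f$, so $|N|=0$, and I set $h:=F\circ g$. The stated identity will then be the fundamental theorem of calculus applied to $h$, once I know that $h'=(f\circ g)\,g'$ a.e.

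The technical heart I would establish first is that $g'(t)=0$ for a.e. $t$ in $S:=g^{-1}(N)$. Fixing $m\in\mathbb{N}$ and setting $A_m=\{t\in S:\ g'(t)\ \text{exists},\ |g'(t)|\ge 1/m\}$, near each $t\in A_m$ one has $|g(s)-g(t)|\ge \tfrac{1}{2m}|s-t|$ on a small neighbourhood, so $g$ is locally expanding and its inverse is locally $2m$-Lipschitz on $g(A_m)$. Since Lipschitz maps do not increase outer measure, a Vitali covering of $A_m$ by such neighbourhoods gives $|A_m|\le 2m\,|g(A_m)|\le 2m\,|N|=0$, and summing over $m$ yields the claim.

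Granting this, the ``only if'' direction is routine. I split $[a,b]$ into $S$ and its complement. On $\{g(t)\notin N\}$ the ordinary chain rule gives $h'(t)=f(g(t))\,g'(t)$ at a.e.\ point where $g$ is differentiable. On $S$ the lemma gives $g'(t)=0$, while $h(S)=F(g(S))\subseteq F(N)$ is null because $F$ has the $(N)$ property; since $h$ is AC it is differentiable a.e., so applying the zero-derivative lemma to $h$ gives $h'(t)=0$ a.e.\ on $S$. Hence $h'=(f\circ g)\,g'$ a.e., this function lies in $L^1[a,b]$, and the fundamental theorem of calculus together with $h(b)-h(a)=F(g(b))-F(g(a))=\int_{g(a)}^{g(b)}f$ yields the change-of-variables identity, proving both stated conclusions.

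The converse is where I expect the real obstacle. Integrability of $(f\circ g)\,g'$ alone does not force $h$ to be AC: taking $g$ the Cantor function and $f\equiv 1$ makes $(f\circ g)\,g'\equiv 0\in L^1$ while $h=g$ is not AC, so the increment identity is genuinely needed, its role being to annihilate the singular (Cantor-type) part of $h$. I would set $R(t)=h(a)+\int_a^t (f\circ g)\,g'$, which is AC with $R'=(f\circ g)\,g'$ a.e.\ and, by the identity, $R(b)=h(b)$; it then remains to prove $h\equiv R$. Writing $f=f^{+}-f^{-}$ exhibits $h$ as a difference of two functions of the form $(\text{monotone AC})\circ g$, which are of bounded variation when $g$ is monotone, as the characteristic $y(t,\cdot)$ is in the intended application; the Lebesgue decomposition $dh=h'\,dt+d\mu_s$ then has $h'=(f\circ g)\,g'$ a.e.\ by the chain-rule analysis above, and the identity forces $\mu_s([a,b])=0$. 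The delicate point, and the step I expect to cost the most, is upgrading ``$\mu_s$ has zero total mass'' to ``$\mu_s\equiv 0$'': this is immediate when $\mu_s$ is sign-definite (e.g.\ $f\ge 0$, so $h$ is monotone), but in general it requires the increment identity on every subinterval, equivalently the Luzin $(N)$ property for $h$, after which Banach--Zarecki gives that $h$ is AC and $h\equiv R$ follows.
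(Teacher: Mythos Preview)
The paper does not prove this lemma at all: it is quoted verbatim from the textbook reference [ZMQ] and stated without proof, as is the corollary that follows. So there is no ``paper's own proof'' to compare against; your proposal is strictly additional content.

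On the merits of what you wrote: the ``only if'' direction is correct and is the standard argument. Your zero-derivative lemma on $S=g^{-1}(N)$, the use of the Luzin $(N)$ property of $F$ to get $|h(S)|=0$, and the chain rule on the complement are exactly the right ingredients, and the fundamental theorem of calculus then delivers both conclusions.

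Your hesitation about the converse is also well placed. With only ``$g$ differentiable a.e.'' and the increment identity assumed on the single interval $[a,b]$, passing from $\mu_s([a,b])=0$ to $\mu_s\equiv 0$ is not automatic, and you correctly isolate this as the obstruction. In the textbook source the intended reading is that the change-of-variables identity holds on every subinterval (equivalently, $F\circ g$ recovers its increments from $(f\circ g)g'$ locally), which is precisely what Banach--Zarecki needs; alternatively one assumes $g$ monotone or $f$ of one sign, so the singular part is sign-definite and ``zero total mass'' forces ``identically zero''. Note that the paper only ever uses the monotone case via the ensuing corollary (applied to the nondecreasing characteristic $y(t,\cdot)$), so the delicate general converse is never actually invoked. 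If you want a self-contained write-up, I would state and prove the monotone/AC version directly and remark that this suffices for the application.
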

\begin{coro}\cite{ZMQ}\label{coro}
	Assume that $g(x)$ is absolutely continuous on $[a,b], f(x)\in L^1[c,d],$ and $g([a,b])\subset [c,d].$ If $g(x)$ is monotonous or $f(x)\in L^{\infty}[c,d].$ Then we have $\int_{g(a)}^{g(b)}f(x)dx=\int_a^bf(g(t))g'(t)dt.$
\end{coro}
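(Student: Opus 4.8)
The plan is to reduce the Corollary to Lemma \ref{lemma3} by verifying, in each of the two cases, that the composite $F\circ g$ is absolutely continuous on $[a,b]$, where $F(x)=\int_c^x f(t)\,dt$ for $x\in[c,d]$. Since $f\in L^1[c,d]$, the function $F$ is absolutely continuous on $[c,d]$ with $F'=f$ almost everywhere; and since $g$ is absolutely continuous on $[a,b]$ it is differentiable almost everywhere, so the standing differentiability hypothesis of Lemma \ref{lemma3} holds. According to Lemma \ref{lemma3}, the absolute continuity of $F\circ g$ on $[a,b]$ is equivalent to the conjunction of $f(g(\cdot))g'(\cdot)\in L^1[a,b]$ and the desired identity $\int_{g(a)}^{g(b)}f\,dx=\int_a^b f(g(t))g'(t)\,dt$. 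Hence it suffices to prove that $F\circ g$ is absolutely continuous under either of the two stated hypotheses.

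First I would treat the case $f\in L^\infty[c,d]$, where monotonicity of $g$ is not needed. Here $F$ is Lipschitz on $[c,d]$, since $|F(x)-F(y)|=\bigl|\int_y^x f\bigr|\le\|f\|_{L^\infty[c,d]}\,|x-y|$ for all $x,y\in[c,d]$. For any finite collection of pairwise disjoint intervals $(a_i,b_i)\subset[a,b]$ one then has $\sum_i|F(g(b_i))-F(g(a_i))|\le\|f\|_{L^\infty[c,d]}\sum_i|g(b_i)-g(a_i)|$, so a modulus of absolute continuity for $g$ immediately furnishes one for $F\circ g$; thus $F\circ g$ is absolutely continuous and Lemma \ref{lemma3} applies.

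The harder case, which I expect to be the main obstacle, is $g$ monotone with $f$ merely integrable, since the composition of two absolutely continuous functions need not be absolutely continuous and monotonicity must be exploited decisively. Assume $g$ is nondecreasing (the nonincreasing case being symmetric). The key observation is that for any finite family of pairwise disjoint subintervals $(a_i,b_i)\subset[a,b]$, ordered so that $a_1<b_1\le a_2<b_2\le\cdots$, monotonicity gives $g(a_1)\le g(b_1)\le g(a_2)\le\cdots$, so the image intervals $(g(a_i),g(b_i))$ are pairwise non-overlapping in $[c,d]$ and the estimate of absolute continuity for $F$ may legitimately be applied to them. Given $\ep>0$, absolute continuity of $F$ yields $\delta'>0$ with $\sum_i|g(b_i)-g(a_i)|<\delta'\Rightarrow\sum_i|F(g(b_i))-F(g(a_i))|<\ep$, and absolute continuity of $g$ yields $\delta>0$ with $\sum_i|b_i-a_i|<\delta\Rightarrow\sum_i|g(b_i)-g(a_i)|<\delta'$. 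Chaining these two implications shows that $F\circ g$ is absolutely continuous on $[a,b]$. With the absolute continuity of $F\circ g$ secured in both cases, Lemma \ref{lemma3} delivers the identity $\int_{g(a)}^{g(b)}f\,dx=\int_a^b f(g(t))g'(t)\,dt$, which completes the argument.
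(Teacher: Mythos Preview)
Your argument is correct. The paper does not supply its own proof of this corollary; it is quoted directly from the reference \cite{ZMQ}, so there is nothing to compare against on the paper's side. Your reduction to Lemma~\ref{lemma3} via absolute continuity of $F\circ g$ is the standard route: the Lipschitz estimate in the $f\in L^\infty$ case and the non-overlapping of image intervals in the monotone case are exactly the right mechanisms, and both deliver absolute continuity of $F\circ g$, after which Lemma~\ref{lemma3} gives both $f(g(\cdot))g'(\cdot)\in L^1[a,b]$ and the change-of-variables identity. One cosmetic point: in the monotone case you might remark that when $g(a_i)=g(b_i)$ the corresponding image interval is degenerate and contributes nothing, so the non-overlapping family fed into the absolute continuity of $F$ is genuinely admissible; this is implicit in what you wrote but worth a half-sentence.
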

We now prove the short time existence of solutions to \eqref{KZ} as follows.
\begin{theo}\label{local existence}
		Given $\bar{k}\in H^1(\mathbb{R}).$ Then there exists a time $T>0$  such that the system \eqref{KZ}-\eqref{K0} has a unique solution   $X=(Z(t), K(t), V(t), W(t), Q(t))	\in L^{\infty}([0,T]; \Omega).$
\end{theo}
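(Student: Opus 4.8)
The plan is to establish short-time existence and uniqueness for the system \eqref{KZ}--\eqref{K0} by recasting it as an ODE in the Banach space $\Omega$ and invoking the contraction mapping principle (Picard--Lindel\"of theorem in Banach spaces). First I would verify that the initial datum $\bar X=(\bar Z,\bar K,\bar V,\bar W,\bar Q)$ constructed in \eqref{K0} indeed belongs to $\Omega$: since $\bar k\in H^1(\mathbb R)$, the map $\xi\mapsto\bar y(\xi)$ defined by \eqref{y0} is Lipschitz with $\bar y_\xi=1/(1+\bar k_x^2\circ\bar y)\in(0,1]$, so $\bar Z=\bar y-\xi\in W^{1,\infty}$; combined with $\int \bar k_x^2\,dx<\infty$ one checks $\bar Z\in H^1$ as well, $\bar K=\bar k\circ\bar y\in H^1\cap W^{1,\infty}$, $\bar V,\bar W\in L^2\cap L^\infty$ (using $\bar V+\bar W^2=\bar V^2+\bar W^2=\bar V$ and $0\le\bar V\le1$), and $\bar Q\equiv1\in L^\infty$. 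Thus $\bar X\in\Omega$ and lies in some ball $\Omega_{M_0}$.

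Next I would show that the right-hand side of \eqref{KZ}, viewed as a map $F:\Omega\to\Omega$, is Lipschitz continuous on every bounded set $\Omega_M$. The components of $F$ are built from $K$, $V$, $W$, $Q$, the nonlinearity $H(K)$, and the convolution-type terms $P(X)$ and $G(X)$. Lemma \ref{PG} already provides that $P$ and $G$ map bounded sets of $\Omega$ into $H^1\cap W^{1,\infty}$ and are Lipschitz there. The remaining terms are polynomial expressions in the components (with $L^\infty$-bounded factors $V,W$ by \eqref{VW}, and bounded exponential coefficients $e^{-\lambda t}$ for $t$ in a compact interval), so products such as $WQ(e^{-\lambda t}K^2-H(K)-P)$ are Lipschitz from $\Omega_M$ to the appropriate component space by the standard estimate $\|ab-\tilde a\tilde b\|\le\|a\|\,\|b-\tilde b\|+\|b\|\,\|a-\tilde a\|$ together with the Banach-algebra property of $H^1\cap W^{1,\infty}$ and the fact that $H^1\cdot L^\infty\subset L^2$, $L^\infty\cdot L^\infty\subset L^\infty$. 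One must take a little care with the $V_t$ and $W_t$ equations to see the output lands in $L^2\cap L^\infty$ and with $K_t=-G$ landing in $H^1\cap W^{1,\infty}$; all of this is routine once Lemma \ref{PG} is in hand.

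Having $F$ locally Lipschitz, the Picard iteration $X^{(n+1)}(t)=\bar X+\int_0^t F(s,X^{(n)}(s))\,ds$ on the space $C([0,T];\Omega_{2M_0})$ is a contraction for $T$ small enough depending only on $M_0$ and the local Lipschitz constant, yielding a unique fixed point $X\in C([0,T];\Omega)\subset L^\infty([0,T];\Omega)$; uniqueness in $L^\infty([0,T];\Omega)$ follows from Gr\"onwall's inequality applied to $\|X(t)-\tilde X(t)\|_\Omega$. I expect the main obstacle to be the bookkeeping needed to confirm that $F$ genuinely maps into $\Omega$ and is Lipschitz \emph{in the product norm}: one must track, component by component, that each nonlinear term lands in exactly the space ($H^1\cap W^{1,\infty}$, $L^2\cap L^\infty$, or $L^\infty$) paired with that slot, relying crucially on the a priori bounds $0\le V\le1$, $|W|\le1/2$ from \eqref{VW} (which hold for solutions, so strictly speaking one first truncates/modifies $F$ outside the physical region, runs the fixed-point argument, and then checks \eqref{W}--\eqref{K1} to see the solution stays in the region), and on the already-established regularity of $P$ and $G$ in Lemma \ref{PG}. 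Everything else is the standard ODE-in-Banach-space machinery.
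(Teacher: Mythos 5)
Your proposal is correct and follows essentially the same route as the paper: verify $\bar X\in\Omega$, use Lemma \ref{PG} together with product estimates to see that the right-hand side of \eqref{KZ} is Lipschitz on bounded sets $\Omega_M$, and then apply the standard Picard/contraction theory for ODEs in the Banach space $\Omega$. The only difference is that you supply more bookkeeping detail (and an optional truncation remark that is not actually needed, since on $\Omega_M$ one simply uses $\|V\|_{L^\infty},\|W\|_{L^\infty}\le M$ rather than the a priori bounds \eqref{VW}), whereas the paper states these steps more briefly.
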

\begin{proof}
	For  $\bar{k}\in H^1(\mathbb{R}),$ one can get $\bar{X}=(\bar{Z},\bar{K},\bar{V},\bar{W},\bar{Q})\in \Omega.$
	Let $\Omega_{M}$ be a bounded subset of $\Omega,$ defined as
	$$\Omega_M=\{X=(Z, K,V, W, Q)\in \Omega| ~\|X\|_{\Omega}\leq M\}.$$
	We need to check that  the right-hand side of the system \eqref{KZ} is  Lipschitz continuous from $\Omega_{M}$ to $\Omega.$ Now, we proceed as in the proof of Lemma \ref{PG}. Therefore, the right-hand side of the system \eqref{KZ} is Lipschitz on $\Omega_M.$ By the standard theory of ordinary differential equations, we  conclude that there exists a unique solution $X=(Z(t), K(t), V(t), W(t), Q(t))$ be the short time solution of the system \ref{KZ} in $L^{\infty}([0,T]; \Omega).$
\end{proof}
Next, we turn to the proof of existence of global solutions of the Cauchy problem \eqref{KZ}-\eqref{K0}.
\begin{theo}\label{global}
Let $\bar{k}\in H^1(\mathbb{R}).$ Then the local solution $X=(Z(t), K(t), V(t),W(t), Q(t))$ of \eqref{KZ} is a unique globally conservative solution  in $L^{\infty}(\mathbb{R}^+; \Omega).$
\end{theo}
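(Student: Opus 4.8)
The plan is to promote the local solution of Theorem \ref{local existence} to a global one by the standard continuation argument: it suffices to show that on any finite interval $[0,T]$ the norm $\|X(t)\|_\Omega$ cannot blow up. The delicate point is that the right-hand side of \eqref{KZ} contains the factor $Q$, which a priori could grow, and also the term $H(K)$ involving the linear piece $(\alpha+\Gamma)K$; controlling $\|P\|_{L^\infty}$ and $\|G\|_{L^\infty}$ therefore requires more than the conserved energy $E_0$. First I would record the a priori bounds that come for free from Lemma \ref{lemma2}: $0\le V\le 1$ and $|W|\le\frac12$ pointwise from \eqref{VW}, and the conserved quantity $\tilde E(t)=\tilde E_0=E_0^2$ from \eqref{lar conser}. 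Since $K^2Q(1-V)\ge 0$ and $QV\ge 0$ (because $Q=y_\xi/(1-V)\ge 0$ as $y(t,\cdot)$ is nondecreasing — this must be checked from the $Q_t$ equation, $Q(t,\xi)=\exp\big(\int_0^t 2W(\tfrac{e^{-\lambda s}}2+e^{-\lambda s}K^2-H(K)-P)\,ds\big)>0$), the conservation of $\tilde E$ gives $\int_{\mathbb R} QV\,d\xi\le E_0^2$ and $\int_{\mathbb R}K^2Q(1-V)\,d\xi\le E_0^2$ for all $t$; together with $y_\xi=Q(1-V)$ this yields $\int_{\mathbb R}(K^2 y_\xi + QV)\,d\xi\le E_0^2$, i.e. a uniform $L^1$-type control that plays the role of the energy in the Lagrangian variables. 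In particular $\|K\|_{L^\infty}\le E_0/\sqrt2$ is recovered from $K^2(t,\xi)=2\int_{-\infty}^\xi K K_\xi\,d\eta=2\int K\,WQ\,d\eta$ and $(WQ)^2\le \tfrac14 Q^2 \le \tfrac14 Q\cdot(\text{bounded})$... more cleanly, $\|K(t,\cdot)\|_{L^\infty}^2\le \int_{\mathbb R}|2KWQ|\,d\eta\le \int(K^2 Q(1-V)+QV)\,d\eta\le E_0^2$, using $W^2+V^2=V$ so $2|KWQ|\le K^2Q(1-V)+QV$ after the pointwise inequality $W^2\le V(1-V)$... in any case an $L^\infty$ bound on $K$ depending only on $E_0$.

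Next I would bound $P$ and $G$. From \eqref{P1}--\eqref{G1}, since the exponential kernel is bounded by $1$ and $\int_{\mathbb R}\big(K^2Q(1-V)+\tfrac12 QV\big)\,d\eta\le C(E_0)$, the only problematic contribution is the term $H(K)Q(1-V)=-\big((\alpha+\Gamma)K+\tfrac{\beta e^{-2\lambda t}}3 K^3+\tfrac{\gamma e^{-3\lambda t}}4 K^4\big)Q(1-V)$. The cubic and quartic pieces are controlled because $\|K\|_{L^\infty}\le C(E_0)$ reduces them to a constant times $K^2Q(1-V)$ plus $\|K\|_{L^\infty}^2\int QV$, hence integrable. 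The genuinely linear piece $(\alpha+\Gamma)K\,Q(1-V) = (\alpha+\Gamma)K\,y_\xi$ is where the variable-transformation trick from \cite{LuoznMOCH} enters, as flagged in the introduction and in the statement of this theorem: one changes variables back via $x=y(t,\eta)$, legitimately by Corollary \ref{coro} (with $g=y(t,\cdot)$ absolutely continuous and monotone, $f=\tfrac12\mathrm{sgn}(\cdot)e^{-|\cdot|}(\alpha+\Gamma)k$ which is in $L^1\cap L^\infty$ since $k\in H^1$ with $\|k\|_{H^1}=E_0$), to rewrite $\int \mathrm{sgn}(\xi-\eta)e^{-|y(\xi)-y(\eta)|}(\alpha+\Gamma)K y_\xi\,d\eta = \int \mathrm{sgn}(y(\xi)-x)e^{-|y(\xi)-x|}(\alpha+\Gamma)k\,dx$, whose $L^\infty$ norm is $\le C|\alpha+\Gamma|\,\|k\|_{L^2}\|e^{-|x|}\|_{L^2}\le C(E_0)$. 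Similarly for $G$. This gives $\|P\|_{L^\infty},\|G\|_{L^\infty},\|P\|_{L^2},\|G\|_{L^2}\le C(E_0)$, and from \eqref{Gx}--\eqref{PX}, once $Z_\xi$ is controlled, also $\|P_\xi\|,\|G_\xi\|$.

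With these bounds in hand, I would close a Gronwall estimate for $\|X(t)\|_\Omega$ on $[0,T]$. From $Q_t=2WQ\big(\tfrac{e^{-\lambda t}}2+e^{-\lambda t}K^2-H(K)-P\big)$ and the now-established bound $\big|\tfrac{e^{-\lambda t}}2+e^{-\lambda t}K^2-H(K)-P\big|\le C(E_0,T)$ together with $|W|\le\frac12$, we get $|Q_t|\le C(E_0,T)\,Q$, hence $\|Q(t)\|_{L^\infty}\le e^{C(E_0,T)t}$ — no blow-up in finite time. Then $Z_\xi$: from $Z_{\xi t}=e^{-\lambda t}K_\xi=e^{-\lambda t}WQ$ and the $L^\infty$ bound on $WQ$, $\|Z_\xi(t)\|_{L^\infty}$ grows at most linearly (with $\|Z_\xi\|_{L^2}$ controlled likewise since $K_\xi=WQ$ and one bounds $\|WQ\|_{L^2}^2=\int W^2 Q^2\le \int V(1-V)Q^2\le \|Q\|_{L^\infty}\int QV\le C$). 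The $K$ equation $K_t=-G$ with $\|G\|_{H^1\cap W^{1,\infty}}\le C$ gives linear growth of $\|K\|_{H^1\cap W^{1,\infty}}$; $V$ and $W$ are already uniformly bounded in $L^\infty$ by \eqref{VW}, and their $L^2$ norms satisfy linear ODEs with coefficients bounded by $C(E_0,T)$. Assembling, $\|X(t)\|_\Omega\le \Phi(t)$ for a continuous $\Phi$ on $[0,T]$, so the local solution extends to all of $[0,T]$, and since $T$ was arbitrary, to $\mathbb R^+$; uniqueness on each interval is inherited from Theorem \ref{local existence}. The main obstacle, and the only step that is not routine ODE bookkeeping, is the estimate of the linear term $(\alpha+\Gamma)K$ inside $P$ and $G$: without the change-of-variables identity of Lemma \ref{lemma3}/Corollary \ref{coro} this term is not obviously controlled by the Lagrangian energy, because $Q(1-V)=y_\xi$ need not be integrable — it is exactly the pullback $\int y_\xi\,d\xi=\text{length of }y(\mathbb R)$ that is infinite, so one cannot naively bound $\int |K| Q(1-V)\,d\xi$; passing back to the Eulerian $x$ variable, where $(\alpha+\Gamma)k\in L^1\cap L^2$, is essential.
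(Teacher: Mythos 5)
Your proposal is correct and follows essentially the same route as the paper: the a priori bounds $0\le V\le 1$, $|W|\le\tfrac12$, $\|K\|_{L^\infty}^2\le \tilde E_0$ from Lemma \ref{lemma2}, the change-of-variables argument via Corollary \ref{coro} (pulling the troublesome linear term in $P$, $G$ back to the Eulerian variable where the kernel $e^{-|x|}$ is integrable, exactly the \cite{LuoznMOCH} trick the paper uses), then the exponential bound on $Q$, Gronwall estimates for the remaining norms, and continuation to all of $\mathbb{R}^+$. The only cosmetic difference is that you finish the linear-term estimate with $\|k\|_{L^2}\|e^{-|x|}\|_{L^2}$ while the paper uses $\|K\|_{L^\infty}\int e^{-|y|}\,dy$; both rest on the same change-of-variables identity and yield the same conclusion.
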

\begin{proof}
In order to prove the 	existence the global solutions, we shall demonstrate  the local solution $X=(Z(t), K(t), V(t),W(t), Q(t))$ is uniformly bounded in $\Omega$ on any bounded time interval $[0,T]$ with any $T>0.$  Lemma \ref{lemma2} guarantees that
	\begin{align}\label{supk}
	\sup\limits_{\xi\in \mathbb{R}}|K^2(\xi)|\leq2\int_{\mathbb{R}}|KK_{\xi}|d\xi\leq 2(\int_{\mathbb{R}}|K^2Q(1-V)|d\xi)^{\frac{1}{2}} (\int_{\mathbb{R}}|(QV)|d\xi)^{\frac{1}{2}} \leq C\tilde{E}(0).
\end{align}
Combining \eqref{VW} and \eqref{supk}, we infer that $K,~V$ and $W$ are uniformly bounded  in $L^{\infty}.$  However, we cannot obtain $\int_{\mathbb{R}}e^{-|\int_{
\eta}^{\xi}Q(1-V)(s)ds|}KQ(1-V)(\eta)d\eta$ is bounded in $L^{\infty}.$ Therefore, we use variable transformations and contradiction argument to handle the problem. According to the Cauchy problem \eqref{KZ}-\eqref{K0}, we get $\bar{y}(\xi)\in L^{\infty}_{loc}$ is strictly monotonous and
\begin{align*}
	|\bar{y}(\xi_2)-\bar{y}(\xi_1)|=|\int_{\bar{y}(\xi_1)}^{\bar{y}(\xi_2)}1~dx|\leq|\int_{\bar{y}(\xi_1)}^{\bar{y}(\xi_2)}(1+\bar{k}_x^2)dx|\leq|\xi_2-\xi_1|,
\end{align*}
from which implies $\bar{y}(\xi)$ is local Lipschitz continuous function. Lemma \ref{local existence} entails that $K(t,\xi)$ is Lipschitz continuous as it maps $\Omega_{M}$ to $H^1\cap W^{1,\infty}.$ From \eqref{KZ}-\eqref{K0}, there exists a $0\leq T<\infty$ such that $y(t,\xi)\in H^1_{loc}$ for $t\in [0,T),$ which means $y(t,\xi)$ is a local absolutely continuous function for  $t\in [0,T).$ Making use of Corollary \ref{coro} for $t\in [0,T)$ and $[a,b]\subset \mathbb{R},$ we arrive at
\begin{align}
		\|\frac 1 2 \int_a^b e^{-|\int_{\eta}^{\xi}Q(1-V)(s)ds|} |K|Q(1-V)(\eta)d\eta \|_{L^{\infty}}&\leq \frac 1 2 \|K\|_{L^{\infty}}\|\int_a^b e^{-|y(\xi)-y(\eta)|} y_{\xi}d\eta\||_{L^{\infty}}\notag\\ 
	&\leq \frac 1 2 \|K\|_{L^{\infty}}\int_{-\infty}^{+\infty}e^{-|y|}dy.\label{y1}
\end{align}
	As $a\rightarrow -\infty, b\rightarrow +\infty,$ the left side of \eqref{y1} is monotonic. Applying the monotonic convergence theorem, we see  that there exists a limit on the left side of \eqref{y1}. Therefore, we obtain
	\begin{align*}
	\|\int_{\mathbb{R}}e^{-|\int_{
\eta}^{\xi}Q(1-V)(s)ds|}KQ(1-V)(\eta)d\eta\|_{L^{\infty}}\leq\frac 1 2\|K\|_{L^{\infty}}\int_{-\infty}^{+\infty}e^{-|s|}ds\leq C\tilde{E}_0^{\frac 1 2}.	
	\end{align*}
Combining the above estimate and \eqref{VW}-\eqref{supk}, we have
\begin{align}\label{Gsup}
	\|G\|_{L^{\infty}}&\leq C\Big(\tilde{E}(0)^{\frac 1 2}+\|e^{-|x|}\|_{L^{\infty}}(\int K^2Q(1-V)+QVd\eta)\notag\\&~~+\|e^{-|x|}\|_{L^{\infty}}\cdot(\|K\|_{L^{\infty}}+\|K\|^2_{L^{\infty}})\int K^2Q(1-V)d\eta \Big)\notag\\
	&\leq C\Big({\tilde{E}_0}^{\frac 1 2}+{\tilde{E}_0}+\tilde{E}_0^{\frac 3 2}+{\tilde{E}^2_0}\Big).
\end{align}
Likewise, we get
\begin{align}\label{pp}
	\|(P,P_{\xi})\|_{L^2\cap L^{\infty}},~\|(G,G_{\xi})\|_{L^2\cap L^{\infty}}\leq C\Big({\tilde{E}_0}^{\frac 1 2}+{\tilde{E}_0}+\tilde{E}_0^{\frac 3 2}+{\tilde{E}^2_0}\Big).
\end{align}
Hence, one can get from \eqref{KZ} that
\begin{align*}
	|Q_t|\leq  C\Big({\tilde{E}_0}^{\frac 1 2}+{\tilde{E}_0}+\tilde{E}_0^{\frac 3 2}+{\tilde{E}^2_0}\Big),
\end{align*}
 which implies
\begin{align*}
		\exp\{-C\Big({\tilde{E}_0}^{\frac 1 2}+{\tilde{E}_0}+\tilde{E}_0^{\frac 3 2}+{\tilde{E}^2_0}\Big)t\}\leq Q(t)\leq 	\exp\{{C\Big({\tilde{E}_0}^{\frac 1 2}+{\tilde{E}_0}+\tilde{E}_0^{\frac 3 2}+{\tilde{E}^2_0}\Big)t}\}.
\end{align*}
From \eqref{Q}, we have
\begin{align}
\|y_{\xi}\|_{L^{\infty}}\leq\|Q\|_{L^{\infty}}\leq \exp\{{C\Big({\tilde{E}_0}^{\frac 1 2}+{\tilde{E}_0}+\tilde{E}_0^{\frac 3 2}+{\tilde{E}^2_0}\Big)T}\}.	
\end{align}
It follows that
\begin{align}\label{ysup}
	\bar{y}(\xi)-C\Big({\tilde{E}_0}^{\frac 1 2}+{\tilde{E}_0}+\tilde{E}_0^{\frac 3 2}+\tilde{E}^2_0\Big)t
\leq y(t,\xi)\leq \bar{y}(\xi)+C\Big({\tilde{E}_0}^{\frac 1 2}+{\tilde{E}_0}+\tilde{E}_0^{\frac 3 2}+\tilde{E}^2_0\Big)t.
\end{align}
This means   $y(t,\xi)$ is bounded in $L^{\infty}_{loc}$ for  all $t\in [0,T],$ then we have $y(t,\xi)\in H^1_{loc}$ for $t\in [0,T].$ According to contradiction argument, we can prove that $T$  in the above results connot have a upper bound, we conclude that the above the results are vaild for  $t\in \mathbb{R}.$

Taking advantage of \eqref{Gsup}-\eqref{ysup} and the system \eqref{KZ}, we get the following estimates
\begin{align*}
		\frac{d}{dt}\|K\|^2_{L^2} &\leq C\|K\|_{L^{2}}\Big({\tilde{E}_0}^{\frac 1 2}+{\tilde{E}_0}+\tilde{E}_0^{\frac 3 2}+\tilde{E}^2_0\Big),\\
			\frac{d}{dt}\|V\|^2_{L^{2}} &\leq C(\|V\|^2_{L^{2}}+\|V\|_{L^{2}}) \Big({\tilde{E}_0}^{\frac 1 2}+{\tilde{E}_0}+\tilde{E}_0^{\frac 3 2}+\tilde{E}^2_0\Big),
\end{align*}
and
\begin{align*}
		\frac{d}{dt}\|W\|^2_{L^{2}}&\leq C \|W\|_{L^{2}}\Big({\tilde{E}_0}^{\frac 1 2}+{\tilde{E}_0}+\tilde{E}_0^{\frac 3 2}+\tilde{E}^2_0\Big).~~~~~~~~~~~~
\end{align*}
In addition, using \eqref{W}-\eqref{Q}, we have
\begin{align*}
	\|K_{\xi}\|_{L^{2}}\leq C\|W\|_{L^2}\|Q\|_{L^{\infty}},~~	\|K_{\xi}\|_{L^{\infty}}\leq C\|W\|_{L^{\infty}}\|Q\|_{L^{\infty}}.
\end{align*}
Therefore, we finish the proof of Theorem \ref{global}.
\end{proof}
\begin{theo}\cite{Holden07lag}\label{meas}
	Let  $X=(Z,K,V,W,Q)$ be the  corresponding solution  of the system \eqref{KZ} with the initial data $\bar{X}=(\bar{Z},\bar{K},\bar{V},\bar{W},\bar{Q})\in L^{\infty}([0,T];\Omega)$ given by Theorem \ref{local existence}. Then, $(Z_{\xi},K_{\xi},V,W,Q)$ is a solution of the system \eqref{KzX}, and
	$$(Z_{\xi},K_{\xi},V,W,Q)\in (L^2\cap L^{\infty})^4\times L^{\infty}.$$ Moreover, we have $y_{\xi}\geq 0$ and $meas\mathcal{(N)}=0$ with
	$$\mathcal{N}=\{(t,\xi)\in[0,T]\times \mathbb{R}~|~y_{\xi}=0\}.$$
\end{theo}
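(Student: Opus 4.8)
I would treat the four assertions in turn. The first two (that $(Z_\xi,K_\xi,V,W,Q)$ solves \eqref{KzX} and lies in $(L^2\cap L^\infty)^4\times L^\infty$) and the third ($y_\xi\ge 0$) are essentially bookkeeping built on Lemma \ref{PG}, Lemma \ref{lemma2} and the bounds from Theorem \ref{global}; the nondegeneracy claim $\mathrm{meas}(\mathcal{N})=0$ is where the real work lies. For the differentiated system: by Theorem \ref{local existence} the map $t\mapsto X(t)=(Z,K,V,W,Q)(t)$ is $C^1$ into $\Omega$, and by Lemma \ref{PG} the maps $P(X)$, $G(X)$ are Lipschitz from bounded subsets of $\Omega$ into $H^1\cap W^{1,\infty}$; this legitimizes differentiating \eqref{KZ} in $\xi$ and interchanging $\partial_t$ and $\partial_\xi$. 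The $V,W,Q$ equations of \eqref{KzX} are literally those of \eqref{KZ}; the equation $Z_{\xi t}=e^{-\lambda t}K_\xi$ comes from differentiating $Z_t=e^{-\lambda t}K$; and the $K_{t\xi}$ equation comes from differentiating $K_t=-G$ and inserting the identity \eqref{Gx} for $G_\xi$ supplied by Lemma \ref{PG}. This gives the first assertion.

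\textbf{Boundedness and the sign of $y_\xi$.} From Lemma \ref{lemma2} one has, for a.e.\ $\xi$, the algebraic relations $W^2+V^2=V$, $y_\xi=Q(1-V)$, $K_\xi=WQ$, whence $0\le V\le 1$ and $|W|\le\frac12$. Since $V,W\in L^2\cap L^\infty$ and, by Theorem \ref{global}, $Q$ is squeezed between positive exponentials in $t$, I get $K_\xi=WQ\in L^2\cap L^\infty$ and $y_\xi=Q(1-V)\in L^\infty$, with $y_\xi\ge 0$ immediate from $Q>0$ and $V\le 1$. For $Z_\xi=y_\xi-1=Q(1-V)-1$ I note from \eqref{K0} that at $t=0$, $\bar Z_\xi=\bar y_\xi-1=-\bar V\in L^2\cap L^\infty$, and then propagate the bound along $Z_{\xi t}=e^{-\lambda t}K_\xi$ with $K_\xi\in L^2\cap L^\infty$ via a Gronwall-type estimate, which yields $(Z_\xi,K_\xi,V,W,Q)\in(L^2\cap L^\infty)^4\times L^\infty$.

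\textbf{The nondegeneracy set.} Because $Q>0$, the identity $y_\xi=Q(1-V)$ gives $\mathcal{N}=\{(t,\xi):y_\xi=0\}=\{(t,\xi):V(t,\xi)=1\}$, a measurable subset of $[0,T]\times\mathbb{R}$ since $V\in C([0,T];L^2)\cap L^\infty$. By Tonelli it suffices to show that for a.e.\ fixed $\xi$ the slice $\mathcal{N}_\xi=\{t\in[0,T]:V(t,\xi)=1\}$ is Lebesgue-null. Fix $\xi$ in the full-measure set on which $W^2+V^2=V$ and the ODEs of \eqref{KZ} hold pointwise; then $t\mapsto(V,W)(t,\xi)$ is $C^1$. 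At any $t_0$ with $V(t_0,\xi)=1$ we have $W(t_0,\xi)=0$, and evaluating the $W$-equation there all terms carrying a factor $(1-V)$ drop out, leaving
\begin{align*}
W_t(t_0,\xi)=(1-2)\Big(0-0-\tfrac{e^{-\lambda t_0}}{2}\cdot 1-0\Big)=\tfrac{e^{-\lambda t_0}}{2}>0 .
\end{align*}
Since $W_t$ is continuous in $t$, $W$ is strictly increasing near $t_0$, so $t_0$ is an isolated point of $\{t:W(t,\xi)=0\}$ and a fortiori of $\mathcal{N}_\xi$. Hence $\mathcal{N}_\xi$ is a discrete, thus at most countable, subset of the compact interval $[0,T]$, of measure zero, and therefore $\mathrm{meas}(\mathcal{N})=0$.

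\textbf{Expected obstacle.} Steps asserting \eqref{KzX} and the $L^2\cap L^\infty$ bounds are routine once one is careful about the $\partial_t$–$\partial_\xi$ commutation (justified because the right-hand side of \eqref{KZ} takes values in $H^1\cap W^{1,\infty}$). The genuine difficulty is the last step: one must recognize that $y_\xi$ vanishes precisely on $\{V=1\}$ and then extract transversality of the crossing from the structure of the $W$-equation — the surviving term being the sign-definite $-\frac{e^{-\lambda t}}{2}V$. A companion technical point, which one inherits from Lemma \ref{lemma2} and the construction in Theorem \ref{global}, is that the pointwise-in-$\xi$ reading of the ODE and of $W^2+V^2=V$ must hold on one common full-measure set so that the slicing argument and Tonelli apply.
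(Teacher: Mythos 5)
The paper does not actually prove this statement: Theorem \ref{meas} is quoted with a citation to \cite{Holden07lag} and no argument is given, so there is no in-paper proof to compare against. Judged on its own, your proposal is correct and is essentially the standard Lagrangian-coordinate argument from the cited literature (Holden--Raynaud, and Bressan--Constantin \cite{Bressan2006g-c}), adapted to the extra weakly dissipative/nonlocal terms of \eqref{KZ}: differentiation of the Banach-space ODE in $\xi$ using Lemma \ref{PG} and the identity \eqref{Gx} to obtain \eqref{KzX}; the algebraic relations of Lemma \ref{lemma2} plus the two-sided exponential bound on $Q$ from Theorem \ref{global} to get $(Z_\xi,K_\xi,V,W,Q)\in (L^2\cap L^\infty)^4\times L^\infty$ and $y_\xi=Q(1-V)\ge 0$; and, for $\mathrm{meas}(\mathcal N)=0$, the identification $\{y_\xi=0\}=\{V=1\}$ together with the transversality computation $W_t=\tfrac{e^{-\lambda t}}{2}>0$ at points where $V=1$, $W=0$ (you correctly observe that all the new terms $H(K)$, $P$, $e^{-\lambda t}K^2$ carry the factor $1-V$ and vanish there, so the sign-definite term is the same as in the CH case), followed by slicing in $\xi$ and Tonelli. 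The two technical points you flag are indeed the ones a full write-up must settle: (i) that the pointwise-in-$\xi$ ODE, the relation $W^2+V^2=V$, and the continuity of $t\mapsto(V,W)(t,\xi)$ hold for all $t$ on a single full-measure set of $\xi$ (this follows from the integral form of the Banach-valued ODE and Fubini, and is implicit in Lemma \ref{lemma2}); and (ii) joint measurability of a representative of $V$ so that $\mathcal N$ is a measurable set. Also note that $Z_\xi=y_\xi-1$ with $\bar Z_\xi=-\bar V$ from \eqref{K0} is the right way to place $Z_\xi$ in $L^2\cap L^\infty$, as you do. In short: where the paper merely cites the result, your proposal supplies a correct proof along the same lines as the cited source, with only routine technical details left to expand.
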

\begin{rema}\label{global Ky}
	Let $\bar{k}\in H^1(\mathbb{R}),$ the system \eqref{Ky} also has a globally unique solution in $L^{\infty}(\mathbb{R}^+; \Omega).$
\end{rema}
	\section{Solutions to the original equation}
\par
~~This section is devoted to proving that the globally conservative weak solution to the original equation.
	\begin{theo}\label{global k}
	Let $\bar{k}\in H^1(\mathbb{R}).$ Then, the Cauchy problem \eqref{k} has a globally conservative solution in the sence of Definition \ref{def1}.
\end{theo}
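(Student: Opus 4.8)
The plan is to transport the global solution $X=(Z,K,V,W,Q)\in L^{\infty}(\mathbb{R}^+;\Omega)$ of the Lagrangian system \eqref{KZ}--\eqref{K0}, supplied by Theorem \ref{global}, back to the Eulerian unknown $k$. First I would set $y(t,\xi):=\xi+\Gamma t+Z(t,\xi)$, so that $y$ solves \eqref{y}; since $Z(t,\cdot)$ stays bounded uniformly on every compact time interval by \eqref{ysup}, and $y_{\xi}=Q(1-V)\ge 0$ lies in $L^{\infty}$ by Theorem \ref{global} and \eqref{Q}, the map $\xi\mapsto y(t,\xi)$ is for each $t$ a nondecreasing Lipschitz surjection of $\mathbb{R}$ onto $\mathbb{R}$. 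I then define $k(t,x):=K(t,\xi)$ for any $\xi$ with $y(t,\xi)=x$. The point that makes this well defined is that on any interval where $y(t,\cdot)$ is constant one has $y_{\xi}=Q(1-V)=0$, hence $V\equiv 1$ (because $Q>0$), hence $W\equiv 0$ by \eqref{W}, and therefore $K_{\xi}=WQ\equiv 0$ by \eqref{K1}; thus $K$ is constant there and $k(t,\cdot)$ is unambiguous.

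Next I would prove the $H^1$-regularity and the conservation law. Off the degenerate set $\mathcal{N}$ of Theorem \ref{meas} one has $k_x\circ y=\tfrac{W}{1-V}$, so applying the change-of-variables identity of Corollary \ref{coro} (legitimate because $y(t,\cdot)$ is absolutely continuous and monotone) gives
\[
\int_{\mathbb{R}}k^2\,dx=\int_{\mathbb{R}}K^2Q(1-V)\,d\xi,\qquad
\int_{\mathbb{R}}k_x^2\,dx=\int_{\mathbb{R}}\frac{V}{1-V}\,Q(1-V)\,d\xi=\int_{\mathbb{R}}QV\,d\xi .
\]
Hence $\|k(t,\cdot)\|_{H^1}^2=\tilde{E}(t)$, and by \eqref{lar conser} this equals $\tilde{E}_0=E_0^2=\|\bar{k}\|_{H^1}^2$ for every $t\ge 0$. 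This furnishes both $k\in L^{\infty}(\mathbb{R}^+;H^1(\mathbb{R}))$ and the conservation of $\|k\|_{H^1}$ demanded by Definition \ref{def1}. I would also note that $t\mapsto k(t,\cdot)$ is Lipschitz continuous into $L^2(\mathbb{R})$, a consequence of the Lipschitz dependence of $(Z,K,V,W,Q)$ on $t$ together with the uniform bounds of Theorem \ref{global}; this makes the forthcoming manipulations of the integrals legitimate.

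Then I would check the weak identity \eqref{10}. Given $\psi\in C_c^{\infty}(\mathbb{R}^+,\mathcal{D})$, set $\Phi(t,\xi):=\psi(t,y(t,\xi))$, which is Lipschitz in $(t,\xi)$ and compactly supported. Changing variables $x=y(t,\xi)$ in the spatial integrals of \eqref{10} (again by Corollary \ref{coro}, with weight $y_{\xi}\,d\xi$), and using $\psi_t(t,y)=\Phi_t-\psi_x(t,y)\,y_t$ and $\psi_x(t,y)\,y_{\xi}=\Phi_{\xi}$, I would integrate by parts in $\xi$ (with $K_{\xi}=WQ$, hence $\tfrac12(K^2)_{\xi}=KK_{\xi}$) and in $t$ (with $K_t=-G$, $y_{\xi t}=e^{-\lambda t}K_{\xi}$, and $P_{\xi}=G\,y_{\xi}$ from \eqref{PX}). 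After these substitutions every interior contribution cancels in pairs, the boundary terms at $\xi=\pm\infty$ vanish since $K\in H^1(\mathbb{R})$, and the $t=0$ boundary term is exactly $\int_{\mathbb{R}}\bar{K}\,\bar{y}_{\xi}\,\Phi(0,\xi)\,d\xi=\int_{\mathbb{R}}\bar{k}(x)\psi(0,x)\,dx$, so that \eqref{10} holds. The same scheme, run with $\phi\in C_c^1(\mathbb{R}^2)$, $w=k_x^2$ (pulled back through $\tfrac{V}{1-V}$) and the balance law \eqref{w}, is what will later yield \eqref{0.6} and hence Definition \ref{def2}, but for the present theorem only \eqref{10} is needed.

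The step I expect to be the main obstacle is not the algebra of the last paragraph but the measure-theoretic bookkeeping behind the first two: one must verify that the degenerate set $\mathcal{N}=\{(t,\xi):y_{\xi}=0\}$ is negligible for every integral passed between Lagrangian and Eulerian coordinates, which is precisely the role of Lemma \ref{lemma3}, Corollary \ref{coro} and Theorem \ref{meas} (giving that $\mathcal{N}$ has zero measure and that $(Z_{\xi},K_{\xi},V,W,Q)$ solves \eqref{KzX}), and one must confirm that $k_x\circ y=\tfrac{W}{1-V}$ genuinely defines an $L^2$ function and that the reconstructed $k$ does not depend on the choice of Lagrangian representative. The delicate $L^{\infty}$ bound on $\Lambda^{-2}\partial_x((\alpha+\Gamma)k)$ flagged in the introduction has already been dealt with inside Theorem \ref{global} via the variable change in \eqref{y1}, so it does not resurface here.
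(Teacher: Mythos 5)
Your proposal is correct and follows essentially the same route as the paper: recover $y$ from the Lagrangian solution, define $k(t,x)=K(t,\xi)$ for $x=y(t,\xi)$, justify well-definedness via $y_\xi=Q(1-V)=0\Rightarrow V=1,\ W=0,\ K_\xi=WQ=0$, obtain $\|k(t)\|_{H^1}^2=\tilde E(t)=\tilde E_0$ by the change of variables of Corollary \ref{coro} together with \eqref{lar conser}, and verify the weak identity \eqref{10} by pulling the test function back along the characteristics and integrating by parts using $K_t=-G$ and $P_x\circ y=G$. The only difference is cosmetic: the paper additionally verifies the balance law \eqref{ww} so as to conclude in the sense of Definition \ref{def2}, whereas you correctly observe that for the stated theorem only Definition \ref{def1} is required.
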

\begin{proof}
	From Remark \ref{global Ky}, we know that the system \eqref{Ky} has a unique globally conservative weak solution. Then, the mapping $t\mapsto y(t,\xi)$ provides a solution to the Cauchy problem
	\begin{equation}
		\left\{\begin{aligned}
			&y_t(t,\xi)=e^{-\lambda  t}K(t,\xi)+\Gamma,\\
			&y(0,\xi)=\bar{y}(\xi).	
		\end{aligned} \right.\label{oy}
	\end{equation}
Set
	\begin{align}\label{kK}
	k(t,x)=K(t,\xi),~~if~x=y(t,\xi).
\end{align}	
	We need to check that \eqref{kK} is   well-defined. \eqref{y0} and \eqref{ysup} entail that
	\begin{align*}
		\lim_{\xi\rightarrow\pm\infty}y(t,\xi)=\pm\infty.
	\end{align*}
Thanks to \eqref{Q}, we see that $y_{\xi}\geq 0$ for all $t\geq 0$ and $a.e.~\xi\in \mathbb{R}.$ Moreover, the map $\xi\mapsto y(t,\xi)$ is nondecreasing. Assume that  ${\xi}_1<\xi_2$ but $y(t,\xi_1)=y(t,\xi_2),$ it follows that
	$$0=\int_{\xi_1}^{\xi_2}y_{\xi}(t,\eta)d\eta=\int_{\xi_1}^{\xi_2}Q(1-V)(t,\eta)d\eta.$$
	If $Q\neq0$, we deduce that  $V=1$ and   $W=0$ in $[\xi_1,\xi_2].$ Therefore, we have
		$$K(t,\xi_1)-K(t,\xi_2)=\int_{\xi_1}^{\xi_2}K_{\xi}(\eta)d\eta=\int_{\xi_1}^{\xi_2}WQ(\eta)d\eta.$$
	If $Q=0,$  the above equality also make sense. Then,  for all $t\geq0$ and $x\in\mathbb{R}, $ the map $(t,x)\mapsto k(t,x)$ is well-defined. According to the definition \eqref{kK},  we give
		\begin{align}\label{kx1}
		k_x(t,y(t,\xi))=\frac{W}{1-V},~if~x=y(t,\xi),~ y_{\xi}\neq0.
	\end{align}
Applying \eqref{lar conser}, \eqref{kx1} and changing the variables, we see that
\begin{align*}
	E(t)=&\int_{\mathbb{R}}(k^2+k_x^2)(x)dx=\int_{\mathbb{R}\cap\{y_{\xi}\neq0\}}(k^2+k_x^2)(t,y(t,\xi))y_{\xi}d\xi\notag\\
	=&\int_{\mathbb{R}\cap\{y_{\xi}\neq0\}}(K^2Q(1-V)+QV)(\xi)d\xi=\int_{\mathbb{R}}(K^2Q(1-V)+QV)(\xi)d\xi\notag\\
	=&\tilde{E}(t)=\tilde{E}_0=\int_{\mathbb{R}}(\bar{k}^2+\bar{k}_x^2)(x)dx,
\end{align*}
which implies  $k$ is uniformly bounded in $H^1(\mathbb{R}).$ On the other hand, $k$ satisfies  \eqref{k}. Indeed, for every $\phi\in C^{\infty}_c(\mathbb{R}^+,\mathcal{D}),$ we have
	\begin{align*}
	&\int_{\mathbb{R}^+}\int_{\mathbb{R}}(-k\phi_t+(e^{{-\lambda }t}k+\Gamma)(t,x)k_x\phi(t,x)dxdt\notag\\
	=&\int_{\mathbb{R}^+}\int_{\mathbb{R}}(-k\phi_t+(e^{{-\lambda }t}k+\Gamma)k_x\phi)(t,y(t,\xi))y_{\xi}d\xi dt\notag\\
	=&\int_{\mathbb{R}^+}\int_{\mathbb{R}}-K\phi_t(t,y(t,\xi)y_{\xi}+(e^{{-\lambda }t}K+\Gamma)K_{\xi}\phi(t,y(t,\xi))d\xi dt	\notag\\
	=&\int_{\mathbb{R}^+}\int_{\mathbb{R}}-K(\phi(t,y(t,\xi)y_{\xi})_t+e^{{-\lambda }t}(K^2\phi (t,y(t,\xi)))_{\xi}+\Gamma(K\phi(t,y(t,\xi)))_{\xi}d\xi dt\notag\\
	=&\int_{\mathbb{R}^+}\int_{\mathbb{R}}-U(\psi(t,y(t,\xi)y_{\xi})_td\xi dt\notag\\
	=&\int_{\mathbb{R}^+}\int_{\mathbb{R}}K_t\phi(t,y(t,\xi))y_{\xi}d\xi dt+\int_{\mathbb{R}}\bar{K}(\xi)\phi(0,\xi)\bar{y}_{\xi}(\xi)d\xi\notag\\
	=&\int_{\mathbb{R}^+}\int_{\mathbb{R}}-P_x(t,y(t,\xi))y_{\xi}d\xi dt+\int_{\mathbb{R}}\bar{K}(\xi)\phi(0,\xi)\bar{y}_{\xi}d\xi\notag\\
	=&\int_{\mathbb{R}^+}\int_{\mathbb{R}}-P_x(t,x)dxdt+\int_{\mathbb{R}}\bar{k}(x)\phi(0,x)dx,
	\end {align*}
		with
	$$(\phi(t,y(t,\xi)y_{\xi})_t=\phi_t(t,y(t,\xi))\cdot y_{\xi}+\phi_x((t,y(t,\xi))(K+\Gamma)(t,\xi)\cdot y_{\xi}+\phi  (t,y(t,\xi)) (K+\Gamma)_{\xi},$$ and
	\begin{align*}
		P_x(t,y(t,\xi))=&\frac{1}{2}(\int_{y(t,\xi)}^{+\infty}-\int_{-\infty}^{y(t,\xi)})e^{-|y(t,\xi)-x|}(-H(k)+e^{{-\lambda }t}k^2+ \frac  {e^{{-\lambda }t}} {2}k_x^2)(t,x)dx\notag\\
		=&\frac{1}{2}(\int_{\xi}^{+\infty}-\int_{-\infty}^{{\xi}})e^{-|\int_{\eta}^{{\xi}}Q(1-V)(t,s)ds|}(-H(K)Q(1-V)+e^{{-\lambda }t}K^2Q(1-V)+\frac  {e^{{-\lambda }t}} {2}QV)d\xi'\notag\\
		=&G(t,\xi).
	\end{align*}
Likewise, we get
	\begin{align}\label{ww}
	\int_{\mathbb{R}^+} \int_{\mathbb{R}} [k_x^2\phi_t+(e^{{-\lambda }t}k+\Gamma)w\phi_x+2k_x(e^{{-\lambda }t}k^2-H(k)-P)\phi]dxdt+\int_{\mathbb{R}}\bar{k}_x^2(x)\phi(0,x))dxdt=0.
\end{align}
Hence, we conclude that $k(t,x)$ is a globally conservative solution to   \eqref{k}  in the sense of Definition \ref{def2}.
\end{proof}
	\section{Peakon solutions}
\par
\quad \quad In this section, we give a conservative  solution in time-weighted $H^1$  space with the peakon solutions of \eqref{u} in the following form
\begin{align}\label{p1}
	u(t,x)=\sum_{i=1}^n p_i (t)e^{-|x-q_i (t)|},
\end{align}
where $p_i (t),q_i (t), i=1,..., n$ are smooth functions with respect to $t.$
\begin{theo}
	Let $q_1(t)<q_2(t)<...<q_n(t).$ Then \eqref{p1}  are weak solutions of O.D.E. in the following 
	\begin{equation*}
		\left\{\begin{aligned}
			&2\dot{p}_i-2p_i(a_i-b_i)+2\lambda p_i=0,\\
			&-2p_i\dot{q}_i+2p_i(a_i+b_i+p_i)+2\Gamma p_i=0,\\
		\end{aligned}\right.
	\end{equation*}
	where $a_i =\sum_{j<i} p_j e^{q_j-q_i}, b_i=\sum_{j<i} p_j e^{q_i-q_j}.$
\end{theo}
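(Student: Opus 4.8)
The plan is to insert the ansatz \eqref{p1} into the weak formulation of \eqref{u} --- equivalently, of \eqref{001} written as $u_t+(u+\Gamma)u_x+\lambda u=Q$ with $Q=-P_x$ and $P$ the convolution of type \eqref{PP} (with the time weights dropped and $k$ replaced by $u$) --- and to read off precisely which conditions on $(p_i(t),q_i(t))_{i=1}^n$ turn it into a conservative $H^1$ weak solution. First I would record the regularity of the ansatz: for $q_1(t)<\cdots<q_n(t)$ smooth, $u$ is continuous on $\mathbb{R}^+\times\mathbb{R}$, decays exponentially, belongs to $C(\mathbb{R}^+;H^1(\mathbb{R}))$ and is $C^1$ in $t$, while it is Lipschitz in $x$ with $u_x$ piecewise smooth and a jump $u_x(t,q_i^+)-u_x(t,q_i^-)=-2p_i(t)$ at each peak. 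Since $u\in H^1$, the quantity $P$ is $C^1$ and $Q=-P_x$ is a bounded continuous function, and $u_t,u_x,(u+\Gamma)u_x,\lambda u$ are genuine $L^{\infty}_{\mathrm{loc}}$ functions; consequently, after integrating by parts in $t$ and in $x$ in the weak identity (legitimate by Lipschitz regularity and compact support of the test functions), the weak-solution requirement reduces to the pointwise identity
\begin{align*}
u_t+(u+\Gamma)u_x+\lambda u+P_x=0\qquad\text{for a.e. }(t,x),
\end{align*}
i.e. to checking it on each open interval $(q_{i-1}(t),q_i(t))$, with the convention $q_0=-\infty$, $q_{n+1}=+\infty$.

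On such an interval one has the closed form $u=A_i(t)e^{-x}+B_i(t)e^{x}$ with $A_i=\sum_{j<i}p_je^{q_j}$ and $B_i=\sum_{j\ge i}p_je^{-q_j}$, so that $u_{xx}=u$ there, and crossing $q_i$ changes the coefficients by $A_{i+1}=A_i+p_ie^{q_i}$, $B_{i+1}=B_i-p_ie^{-q_i}$. Then $u_t,(u+\Gamma)u_x,\lambda u$ are explicit exponential polynomials, and $P$ is recovered by solving the constant-coefficient ODE $(1-\partial_x^2)P=(\text{an explicit polynomial in }u\text{ and }u_x)$ on each interval --- a particular solution plus a homogeneous part $C_ie^{-x}+D_ie^{x}$ --- with $C_i,D_i$ pinned down by $C^1$-matching of $P$ at every $q_j$ and by $P\to0$ at $\pm\infty$ (equivalently, from the explicit formula $P(x)=\tfrac12 e^{-x}\int_{-\infty}^{x}e^{y}(\cdots)\,dy+\tfrac12 e^{x}\int_{x}^{\infty}e^{-y}(\cdots)\,dy$). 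Substituting into the identity, collecting coefficients of the distinct exponential modes, and using the linking relations between the $A_i,B_i$, one finds that it holds on all intervals precisely when the balance at each peak forces the characteristic speed $\dot q_i=u(t,q_i)+\Gamma=a_i+b_i+p_i+\Gamma$ and the amplitude law $\dot p_i=p_i(a_i-b_i)-\lambda p_i$, where $a_i=\sum_{j<i}p_je^{q_j-q_i}$, $b_i=\sum_{j>i}p_je^{q_i-q_j}$ (so that $u(t,q_i)=a_i+b_i+p_i$) and $a_i-b_i=-\langle u_x\rangle(t,q_i)$ with $\langle u_x\rangle(t,q_i)=\tfrac12(u_x(t,q_i^-)+u_x(t,q_i^+))$ the averaged slope at the peak; this is exactly the system in the statement. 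The same system is obtained more compactly by passing to the momentum $m=u-u_{xx}=2\sum_i p_i\delta_{q_i}$ and matching the coefficients of $\delta_{q_i}$ and $\delta'_{q_i}$ in its transport equation, the ill-defined product $u_x m$ being given meaning through $\langle u_x\rangle(t,q_i)$. Finally, once $u$ is a weak solution it is conservative: mapping to $k=e^{\lambda t}u$ (again of peakon form for \eqref{k}), the $k_x^2$-balance \eqref{0.6} holds because the characteristic relation $\dot q_i=e^{-\lambda t}k(t,q_i)+\Gamma$ is exactly the Rankine--Hugoniot condition along the jump curves $x=q_i(t)$, and this is what upgrades the energy identity \eqref{E0} to the present non-smooth setting; hence $\|k(t)\|_{H^1}=\|k(0)\|_{H^1}$ and $u$ is a conservative solution in the sense of Definition \ref{def2}.

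The step I expect to be the main obstacle is the bookkeeping in the second paragraph, and in particular the resonant mode. Because $h$ contains the linear term $(\alpha+\Gamma)u$ (and, less importantly, $\tfrac{\beta}{3}u^3$ also contributes an $e^{\pm x}$-mode), the forcing of $(1-\partial_x^2)P=\cdots$ has a nonzero $e^{\pm x}$ component on each interval, so the particular solution for $P$ carries resonant $xe^{\pm x}$ terms and $P_x$ inherits $xe^{\pm x}$ contributions that no other term in the identity can match locally; one therefore has to verify, by carefully tracking how the matching constants $C_i,D_i$ absorb these across the peaks together with the decay at $\pm\infty$, that all such residual contributions cancel and the identity genuinely closes. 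This is exactly the linear term the paper isolates and handles via the variable change taken from \cite{LuoznMOCH}, and making this cancellation transparent --- for instance via the momentum/distributional route, where it becomes a statement about which Dirac masses the right-hand side of the equation for $m$ carries --- is the crux of the argument; by comparison, the consistent use of the averaged slope $\langle u_x\rangle(t,q_i)$ at the peaks is a minor point.
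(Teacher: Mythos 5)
Your plan is not the route the paper takes, and its central step is left open at exactly the point where it cannot be closed. The paper substitutes $u=\sum_i u_i\chi_i$ into the third-order distributional form of \eqref{u}, expands the singular parts, and obtains the two ODEs \emph{only} from the vanishing of the coefficients of $\delta_{q_i}$ and $\delta_{q_i}'$ via the jump relations \eqref{p12}--\eqref{p13}; the regular ($\chi_i$) part is never required to vanish. Your primary route instead reduces the weak formulation of $u_t+(u+\Gamma)u_x+\lambda u+P_x=0$ to a pointwise identity on each interval $(q_i,q_{i+1})$, which demands strictly more. On such an interval $u_{xx}=u$, and a direct computation shows that, once the jump/ODE conditions are imposed, the interior residual of the equation equals (up to sign) $(\alpha+\Gamma)u_x+\beta u^2u_x+\gamma u^3u_x$, which does not vanish for general $\alpha,\beta,\gamma,\Gamma$. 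In your language: the resonant forcing of $(1-\partial_x^2)P$ on the modes $e^{\pm x}$ coming from $(\alpha+\Gamma)u+\tfrac{\beta}{3}u^3$ produces genuine $xe^{\pm x}$ terms in $P$ (e.g. $\tfrac12 e^{-|x|}\ast e^{-|x|}=\tfrac12(1+|x|)e^{-|x|}$), whose local coefficients are fixed by the convolution formula and cannot be absorbed by the homogeneous constants $C_i,D_i$ or by any term of $u_t+(u+\Gamma)u_x+\lambda u$, none of which contains such modes. So the cancellation you defer to ``careful bookkeeping'' is not bookkeeping: it fails unless the local resonant coefficients vanish (essentially $\alpha+\Gamma=\beta=\gamma=0$, the CH-type case), no matter how $(p_i,q_i)$ are chosen. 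As written, your proposal both acknowledges this as the crux and does not supply it, so it is a plan with a genuine gap; carried out honestly, it would show that the pointwise identity between peaks cannot hold in general, i.e. it would prove something different from (indeed stronger than) what the paper's delta-matching computation establishes.

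The salvageable part of your proposal is precisely your ``more compact'' alternative: passing to $m=u-u_{xx}=2\sum_i p_i\delta_{q_i}$ and matching the coefficients of $\delta_{q_i}$ and $\delta_{q_i}'$ is, in substance, the paper's proof, and it does yield the stated system $2\dot p_i-2p_i(a_i-b_i)+2\lambda p_i=0$, $-2p_i\dot q_i+2p_i(a_i+b_i+p_i)+2\Gamma p_i=0$ from the jump balance alone. Two further remarks. First, your $b_i=\sum_{j>i}p_je^{q_i-q_j}$ silently corrects the index in the statement (with $j<i$ the jump formulas \eqref{p12}--\eqref{p13} and $u(t,q_i)=p_i+a_i+b_i$ would be wrong); that correction is needed, and you should say so explicitly rather than change the definition without comment. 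Second, the variable change borrowed from \cite{LuoznMOCH} is used in the paper only to obtain the $L^\infty$ bound on $G$ in Theorem \ref{global}; it plays no role in the peakon section, so it is not the mechanism that would resolve the resonance issue you identify.
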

\begin{proof}
	For any $i\in \{0,...n+1\},$ let
	\begin{align*}
		u_i(x,t)=\sum_{j=1}^{i} p_j(t) e^{q_j(t)-x}+\sum_{j=i+1}^n p_j(t) e^{x-q_j(t)},
	\end{align*}
	where $u_i(t,x)\in C^{\infty}$ in the space variable. Then \eqref{p1} can be rewritten as
	\begin{align*}
		u(t,x)=\sum_{i=0}^{n}u_i(t,x)\chi_i(x),
	\end{align*}
	which $\chi_i$ represents the characteristic function in interval $[q_i, q_{i+1}),\ i=1,...,n$ and $q_0=-\infty,\ q_{n+1}=\infty.$ Owing that $\chi_i$ has disjoint supports, we have
	\begin{align*}
		(u+\Gamma)u_x=\sum_{i=0}^n (u_i+\Gamma) u_{i,x}\chi_i.
	\end{align*}
Therefore
	\begin{align}\label{p6}
		\Big((u+\Gamma)u_x\Big)_x&=\sum_{i=0}^n\Big((u_i+\Gamma) u_{i,x}\Big)_x \chi_i+\sum_{i=1}^n((u_i+\Gamma) u_{i,x})(q_i)\delta_{q_i}-\sum_{i=0}^{n-1}\Big(((u_{i}+\Gamma) u_{{i},x}\Big)(q_{i+1})\delta_{q_{i+1}}\notag\\
		&=\sum_{i=0}^n\Big((u_i+\Gamma) u_{i,x}\Big)_x \chi_i+\sum_{i=1}^n[(u_i+\Gamma)u_{i,x}]_{q_i} \delta_{q_i},
	\end{align}
	which $[v]_{{q_i}}=v(q_i^+)-v(q_i^-).$
 Noting that $u$ is continuous, one can get $u=u_i=u_{i,xx}$ on every  interval $(q_i,q_{i+1})$ and $[u^2]_{q_i}=0.$   Differentiating \eqref{p6}, we obtain
\begin{align}\label{p7}
	\Big((u+\Gamma)u_x\Big)_{xx}&=\sum_{i=0}^n\Big((u_i+\Gamma) u_{i,x}\Big)_{xx} \chi_i+\sum_{i=1}^n[((u_i+\Gamma)u_{i,x})_x]_{q_i} \delta_{q_i}+\sum_{i=1}^n[(u_i+\Gamma)u_{i,x}]_{q_i} \delta_{q_i}'\notag\\
	&=\sum_{i=0}^n\Big((u_i+\Gamma) u_{i,x}\Big)_{xx} \chi_i+\sum_{i=1}^n[u_x^2+u^2+\Gamma u]_{q_i} \delta_{q_i}+	\sum_{i=1}^n[(u+\Gamma)u_{x}]_{q_i} \delta_{q_i}'\notag\\
	&=\sum_{i=0}^n\Big((u_i+\Gamma) u_{i,x}\Big)_{xx} \chi_i+\sum_{i=1}^n[u_x^2+\Gamma u]_{q_i} \delta_{q_i}+	\sum_{i=1}^n[(u+\Gamma)u_{x}]_{q_i} \delta_{q_i}'.
\end{align}

Likewise, we have
\begin{align}
	(\frac1 2 u_x^2-\alpha u-\frac{\beta} {3}u^3-\frac{\gamma}{4}u^4)_x&=\sum_{i=0}^n 	(\frac1 2 u_{i,x}^2-\alpha u_i-\frac{\beta} {3}u_i^3-\frac{\gamma}{4}u_i^4)_x\chi_i\notag\\
	&~~~+\sum_{i=1}^n [\frac1 2 u_x^2-\alpha u-\frac{\beta} {3}u^3-\frac{\gamma}{4}u^4]_{q_i}\delta_{q_i},\label{p8}\\
	u_t-u_{xxt}=\sum_{i=0}^n (u_{i,t}-&u_{i,xxt})\chi_i-\sum_{i=1}^n ([u_{xt}]_{q_i}\delta_{q_i}+[u_t]_{q_i}\delta_{q_i}')\label{p9}.
\end{align}
It follows from \eqref{p6}-\eqref{p9} that
\begin{align*}
	&\sum_{i=0}^n 		\{u_{i,t}-u_{i,xxt}+3u_iu_{i.x}-\Big((u_{i}+\Gamma)u_{i,x}\Big)_{xx}+(\frac1 2 u_{i,x}^2-\alpha u_i-\frac{\beta} {3}u_i^3-\frac{\gamma}{4}u_i^4)_x+\lambda(u_i-u_{i,xx})\}\chi_i\notag\\
	&~~~+\sum_{i=1}^n\{-[u_{i,t}]_{q_i}-[u_x^2+u^2+\Gamma u]_{q_i}+ [\frac1 2 u_x^2-\alpha u-\frac{\beta} {3}u^3-\frac{\gamma}{4}u^4]_{q_i}-\lambda[u_x]_{q_i}\}\delta_{q_i}\notag\\
	&~~~+\sum_{i=1}^n\{-[u_t]_{q_i}-[(u+\Gamma)u_{x}]_{q_i}-\lambda[u]_{q_i}\}\delta_{q_i}'=0.
\end{align*}
For $\chi_i, \delta_{q_j}, \delta_{q_k}', i,j,k=1,...,n,$ we have
\begin{equation}
	\left\{\begin{aligned}
		&-[u_{i,t}]_{q_i}-[u_x^2]_{q_i}+[u^2]_{q_i}+\Gamma [u]_{q_i}+ \frac1 2  [u_x^2]_{q_i}-\alpha  [u]_{q_i}-\frac{\beta} {3} [u^3]_{q_i}-\frac{\gamma}{4} [u^4]_{q_i}-\lambda[u_x]_{q_i}=0,\\
		&-[u_t]_{q_i}-[(u+\Gamma)u_{x}]_{q_i}-\lambda[u]_{q_i}=0.\label{p11}
	\end{aligned}\right.
\end{equation}
According to the definition of $a_i,b_i,i=1,...,n,$ we have
\begin{align}
	&[u_t]_{q_i}=2p_i\dot{q}_i,~ [u_{xt}]_{q_i}=-2\dot{p}_i,~~~~~~~~~~ [uu_x]_{q_i}=-2p_i(a_i+p_i+b_i),\label{p12}\\
	&[u_x]_{q_i}=-2{p}_i,\  [u_x^2]_{q_i}~=4p_i(a_i-b_i),\ [u]_{q_i}=[u^2]_{q_i}=[u_{xx}]_{q_i}=0.\label{p13}
\end{align}

Substituting \eqref{p12}, \eqref{p13} into \eqref{p11}, we end up with
\begin{equation*}
	\left\{\begin{aligned}
		&2\dot{p}_i-2p_i(a_i-b_i)+2\lambda p_i=0,\\
		&-2p_i\dot{q}_i+2p_i(a_i+p_i+b_i)+2\Gamma{p}_i=0.
	\end{aligned}
	\right.
\end{equation*}
Now, we consider that $n=1$ and $a_1=b_1=0.$ Then we have
\begin{equation*}
	\left\{\begin{aligned}
		&2\dot{p}+2\lambda p=0,\\
		&-2p\dot{q}+2p^2+2\Gamma p=0,
	\end{aligned}\right.
\end{equation*}
which implies that
\begin{equation}
	\left\{\begin{aligned}
		&p=p(0)e^{-\lambda t},\\
		&q=\frac {1}{\lambda}p(0)(1-e^{-\lambda t})+\Gamma t+q(0), \quad\quad \lambda >0,\\
		&q=p(0)t+\Gamma t+q(0), \quad\quad \quad\quad\quad\quad\quad\lambda =0.
	\end{aligned}\right.
\end{equation}

It follows that
\begin{align*}
	u(t,x)&=p(0)e^{-\lambda t}e^{-|x-q(t)|},\\ k(t,x)&=e^{\lambda t} u(t,x)=p(0)e^{-|x-q(t)|}\Rightarrow \|k\|_{H^1}=\|\bar{k}\|_{H^1}.
\end{align*}
Figure 1 shows the evolution behavior of single peak solitary solutions with dissipative coefficient $\lambda$ with $p(0)=\frac 1 2,~q(0)=1,~\Gamma=-2,$
\begin{figure*}[h]
	\centering
	\subfigure[$\lambda=1$]{
		\includegraphics[scale=0.25]{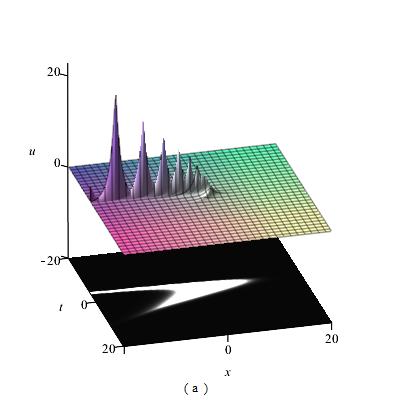}
	}
	\quad
	\subfigure[$\lambda=\frac{1}{4}$]{
		\includegraphics[scale=0.25]{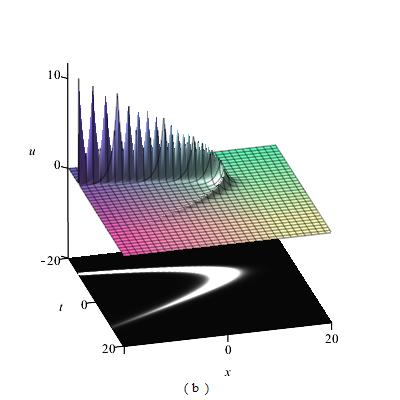}
	}
	\quad
	\subfigure[$\lambda=\frac{1}{10}$]{
		\includegraphics[scale=0.25]{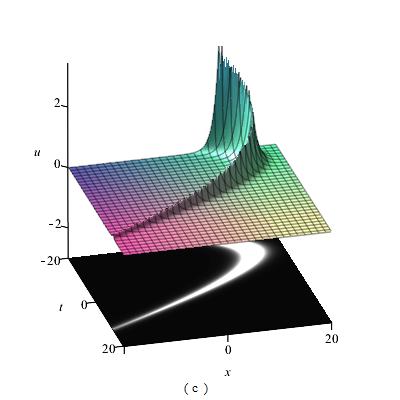}\label{c}
	}
\end{figure*}
$\\$
\begin{figure*}[h]
		\centering
	\subfigure[$\lambda=\frac{1}{15}$]{
		\includegraphics[scale=0.25]{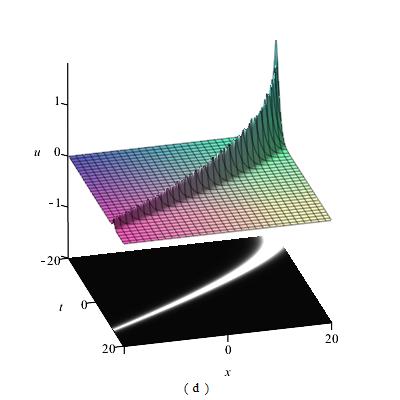}\label{d}
	}
	\subfigure[$\lambda=\frac{1}{30}$]{
		\includegraphics[scale=0.25]{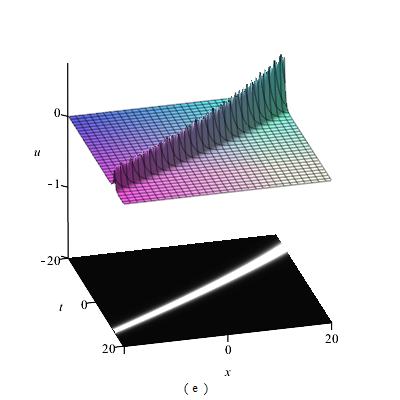}\label{e}
	}
	\quad
	\subfigure[$\lambda=0$]{
		\includegraphics[scale=0.25]{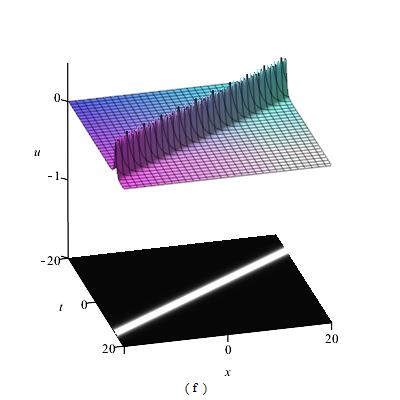}\label{f}
	}
	\caption{ (a)$\lambda=1$; (b)$\lambda=\frac{1}{4}$; (c)$\lambda=\frac{1}{10}$; (d)$\lambda=\frac{1}{15}$; (e)$\lambda=\frac{1}{30}$; (f)$\lambda=0$.}
\end{figure*}
$\\$
 Note that $(1-\partial_{xx})u=\delta_0 \in \mathcal{M} \hookrightarrow B^0_{1,\infty}\hookrightarrow B^{-\frac 1 2}_{1,\infty},$  where $\mathcal{M}$ is bounded measures spaces. Thanks to $B^{\frac 3 2}_{2,\infty} \hookrightarrow H^1(\mathbb{R})$, we get $u\in  H^1(\mathbb{R}).$ Hence, $k(t,x)$ is a conservative solution of equation \eqref{u}  in $H^1,$ in other words,  $u(t,x)$ is a conservative  solution in time weighted $H^1$ space. Moreover,  we infer that
$$\lim\limits_{\lambda \rightarrow 0} q(t)=q(0).$$
\end{proof}

\section{Uniqueness of solutions
	for the original equation}
\par

~~~In this section, we consider the uniqueness solutions  for \eqref{k}. Our main result is as follows.
\begin{theo}\label{th2.4}
	Let $k(t,x)\in H^1$ be a conservative weak solutions to the Cauchy problem \eqref{k} in the sense of Definition \ref{def2}. Then $k(t,x)$ is unique.
\end{theo}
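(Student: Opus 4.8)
The plan is to prove uniqueness by reversing the construction of Sections 3--4: given an arbitrary conservative weak solution $k(t,x)$ in the sense of Definition \ref{def2}, I would build a \emph{specific} set of Lagrangian variables tailored to $k$, show that they solve the semilinear system \eqref{KZ}--\eqref{K0}, invoke the uniqueness part of Theorem \ref{global} for that ODE system in $\Omega$, and then conclude that $k$ must coincide with the solution produced in Theorem \ref{global k}. Concretely, for fixed $t$ I would define the non-decreasing map $\xi\mapsto y(t,\xi)$ implicitly through an energy-type relation adapted to the solution's energy density: following \eqref{y0}, set $y(t,\xi)$ to be the (generalized) inverse of $x\mapsto x+\int_0^x k_x^2(t,z)\,dz$, so that $\xi$ measures arclength-plus-energy. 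Then define $K(t,\xi)=k(t,y(t,\xi))$, and $V,W,Q$ by the formulas \eqref{l}, interpreting $k_x\circ y$ through the a.e. defined $k_x(t,\cdot)\in L^2$ and setting $V=1$, $W=0$, $Q=0$ on the (measure-zero in $\xi$, but possibly positive-length in $x$) singular set where the energy concentrates. The weak formulation \eqref{10} together with the balance law \eqref{0.6} for $w=k_x^2$ is exactly what is needed to verify that these variables are absolutely continuous in $t$ and satisfy \eqref{KZ}.

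The key steps, in order, would be: (1) \textbf{Definition of the Lagrangian map.} Show that $x\mapsto x+\int_0^x k_x^2\,dz$ is well-defined, continuous, strictly increasing, and proper, hence has a Lipschitz (with constant $1$) inverse $y(t,\cdot)$; record $y_\xi = 1/(1+k_x^2\circ y)$ a.e., equivalently $y_\xi = Q(1-V)$ once $Q$ is defined, recovering \eqref{Q}. (2) \textbf{Time regularity of $y$.} Using that $k$ solves \eqref{k} weakly and the balance law \eqref{0.6} holds, prove that for a.e. $\xi$ the map $t\mapsto y(t,\xi)$ is absolutely continuous with $y_t = e^{-\lambda t}k(t,y)+\Gamma$; this is the technical heart, because one must transfer the transport structure of \eqref{k} and the conservation of $\int(k^2+k_x^2)dx$ from Eulerian to Lagrangian coordinates, and this is where Lemma \ref{lemma3} and Corollary \ref{coro} (the change-of-variables results from \cite{ZMQ}) are indispensable for justifying $\int_{y(\xi_1)}^{y(\xi_2)} f\,dx = \int_{\xi_1}^{\xi_2} f(y)y_\xi\,d\xi$ even when $y$ is only monotone and $f$ merely $L^1$. (3) \textbf{The variables solve the ODE system.} Differentiate the definitions \eqref{l} in $t$, substitute the equations for $k,k_x$ (using \eqref{kx} and \eqref{w}), and check that $(Z,K,V,W,Q)$ with $Z=y-\xi-\Gamma t$ satisfies \eqref{KZ} with the initial data \eqref{K0}; simultaneously verify the algebraic identities \eqref{W}--\eqref{K1} hold by construction. (4) \textbf{Uniqueness via the ODE.} Since \eqref{KZ}--\eqref{K0} has a unique global solution in $L^\infty(\mathbb{R}^+;\Omega)$ by Theorem \ref{global}, and the solution built in Theorem \ref{global k} arises from exactly these variables, conclude $k(t,x)=K(t,\xi)$ whenever $x=y(t,\xi)$, which pins down $k$ uniquely. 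One must also check the relabeling is immaterial: two conservative solutions give Lagrangian data differing only by a relabeling that the map $(Z,K,V,W,Q)\mapsto k$ quotients out, so the resulting $k$ agrees.

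I expect the main obstacle to be Step (2): establishing that $t\mapsto y(t,\xi)$ is genuinely absolutely continuous with the claimed derivative, and more delicately, controlling the singular part of the energy measure. A conservative weak solution only guarantees $\|k\|_{H^1}$ is conserved as a number; it does not immediately tell us that the \emph{density} $k_x^2\,dx$ has no singular continuous part, nor how such a part evolves. The balance law \eqref{0.6} for $w=k_x^2$ is precisely the extra hypothesis (built into Definition \ref{def2}) that lets one track the full energy measure $\mu_t$ with $\mu_t(\mathbb{R})=E_0^2$ constant, and one must show that, along characteristics, the absolutely continuous and singular parts transport consistently so that the formula $y_\xi=Q(1-V)$ with $V\in[0,1]$ captures everything; on the set $\{V=1\}$ one has $y_\xi=0$, matching $meas(\mathcal{N})=0$ in $\xi$ from Theorem \ref{meas}. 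A secondary subtlety is showing the constructed $K$ lies in $W^{1,\infty}$ in $\xi$ (needed for the ODE well-posedness in $\Omega$), which follows from $K_\xi=WQ$ with $|W|\le\frac12$ and $Q\in L^\infty$, but only once Step (1) and the bounds from Theorem \ref{global} are in place. Once these regularity and measure-theoretic points are secured, the remaining work is the routine verification in Step (3) and the direct appeal to ODE uniqueness in Step (4).
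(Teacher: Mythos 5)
Your overall strategy (attach Lagrangian-type variables to an arbitrary conservative solution, show they satisfy the semilinear system, and conclude by ODE uniqueness) is indeed the paper's strategy, but there is a genuine gap at the step you yourself call the technical heart, and as you have set it up the claim is false. You define $y(t,\xi)$, for each fixed $t$, as the generalized inverse of $x\mapsto x+\int_0^x k_x^2(t,z)\,dz$ and then assert in Step (2) that for fixed $\xi$ one has $y_t=e^{-\lambda t}k(t,y)+\Gamma$. With that definition the label $\xi$ is ``current position plus current accumulated energy,'' and this quantity is \emph{not} constant along characteristics: differentiating the relation in $t$ and using the balance law \eqref{0.6} gives, formally,
\begin{align*}
y_t(t,\xi)\bigl(1+k_x^2(t,y)\bigr)=\bigl(e^{-\lambda t}k+\Gamma\bigr)k_x^2(t,y)-\int_{-\infty}^{y}2k_x\bigl(e^{-\lambda t}k^2-P-H(k)\bigr)\,dz,
\end{align*}
which is not $(e^{-\lambda t}k+\Gamma)(1+k_x^2)$ unless the source integral vanishes. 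In other words, the fixed-label curve in the time-$t$ energy parametrization moves with a different speed than the characteristic, so the variables you build from it will not satisfy \eqref{KZ} with data \eqref{K0}, and the appeal to ODE uniqueness cannot be made.

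The paper resolves exactly this point by keeping two objects: the $1$-Lipschitz map $\beta\mapsto y(t,\beta)$ defined by \eqref{yb}--\eqref{yb1} (your Step (1), which is fine), and an \emph{evolving} label $\beta(t,\xi)$ along the characteristic, which satisfies the integral equation \eqref{B}, $\beta(t,\xi)=\xi+\int_0^t F(s,\beta(s,\xi))\,ds$ with $F$ as in \eqref{F}; the drift $F$ records precisely the energy flux you are ignoring. The crux of the uniqueness proof (Lemma \ref{uniqueness}) is then (i) that $F$ is Lipschitz in $\beta$ (this is what the energy coordinate buys, since $y_\beta=1/(1+k_x^2)$ tames the quadratic terms), so \eqref{B} has a unique solution by a contraction argument, and (ii) a delicate verification, using the weak formulations \eqref{10} and \eqref{0.6} with test functions of the form $\min\{\varrho^{\varepsilon},\chi^{\varepsilon}\}$ and a contradiction argument, that any characteristic of the weak solution must satisfy both \eqref{yy} and the energy relation \eqref{yy1}, together with $k(t,y(t))-k(\tau,y(\tau))=-\int_\tau^t P_x(s,y(s))\,ds$ as in \eqref{k1-k2}. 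Only after this (plus the Lipschitz regularity of Lemmas \ref{kycontiu} and \ref{l5.3}, Rademacher's theorem, and the ``good characteristic'' argument yielding \eqref{qp37}) can one write the system \eqref{Ky1}--\eqref{K01} for the tailored variables and invoke its uniqueness. Your proposal neither introduces the drifting label nor explains how uniqueness of characteristics for a merely $H^1$ (hence only H\"older) velocity field would be obtained — citing uniqueness for \eqref{KZ} presupposes the variables are already well defined and solve it — so the central mechanism of the proof is missing.
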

\subsection{Uniqueness of characteristic}
~~	This subsection is devoted to the  study of the uniqueness of the characteristic to \eqref{k}. 	Let $k=k(t,x)$ be a conservative solution of  \eqref{k} and satisfy \eqref{w}. Let $y(t,\xi)$ still denote the characteristic 
	\begin{equation}
	\left\{\begin{aligned}
		&y_t(t,\xi)=e^{-\lambda  t}k(t,y(t))+\Gamma,\\
		&y(0,\xi)=\bar{y}.	
	\end{aligned} \right.\label{yy}
\end{equation}
 Introduce new coordinates $(t,\beta)$ relative to the original coordinates $(t,x)$ by the following transformation
	\begin{align}\label{yb}
	y(t,\beta)+\int_{-\infty}^{y(t,\beta)}k_x^2(t,z)dz=\beta.
\end{align}
	At time $t$ where the measure $\mu_{(t)}$ is not absolutely continuous
with respect to the Lebesgue measure,
For any time $t$ and $\beta\in\mathbb{R}$,  define $y(t, \beta)$ to be the unique $y$ such that
\begin{align}\label{yb1}
	y(t, \beta)+\mu_{(t)}\{(-\infty,y)\}\leq \beta\leq y(t, \beta)+\mu_{(t)}\{(-\infty,y]\}.
\end{align}	
Combining \eqref{y1} and \eqref{yy}, we get
\begin{align}\label{yy1}
	\frac{d}{dt}\int_{-\infty}^{y(t)}k_x^2dx=\int_{-\infty}^{y(t)}2k_x(e^{{-\lambda }t}k^2-H(k)-P)dx.
\end{align}
Now we give the  following lemma   to prove the Lipschitz continuity of $x$ and $k$ as functions of the variables $t,\beta.$
	\begin{lemm}\label{kycontiu}
	Let $k(t,x)$ be a  conservative  solution  of \eqref{k}. Then, for all $t\geq 0$, the following maps
	\begin{align*}
		&\beta \mapsto y(t,\beta),\\
		&\beta \mapsto k(t,y(t,\beta)),
	\end{align*}
	    defined by \eqref{yb1}, are Lipschitz continuous with constant $1.$  Moreover, the map $t\mapsto y(t,\beta)$ is also Lipschitz continuous with a
	constant depending only on $\|\bar{k}\|_{H^1}.$
\end{lemm}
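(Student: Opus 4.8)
# Proof Proposal for Lemma 5.2 (Lipschitz continuity of $y$ and $k$ in the variables $t,\beta$)

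The plan is to exploit the defining relation \eqref{yb1}, which builds the monotone rearrangement $\beta\mapsto y(t,\beta)$ so that a unit increment in $\beta$ is split between an increment in $y$ and an increment in the ``energy content'' $\mu_{(t)}$ of the interval to the left of $y$. First I would fix $t$ and take $\beta_1<\beta_2$. Since $\mu_{(t)}$ is a nonnegative measure, the map $y\mapsto y+\mu_{(t)}\{(-\infty,y)\}$ is nondecreasing, so $y(t,\beta_1)\le y(t,\beta_2)$, and from \eqref{yb1} applied at both endpoints,
\begin{align*}
\beta_2-\beta_1 \;\ge\; \bigl(y(t,\beta_2)-y(t,\beta_1)\bigr)+\mu_{(t)}\bigl\{[y(t,\beta_1),\,y(t,\beta_2))\bigr\}\;\ge\; y(t,\beta_2)-y(t,\beta_1),
\end{align*}
which gives the Lipschitz bound with constant $1$ for $\beta\mapsto y(t,\beta)$. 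For the second map, on the set where $y_\xi\neq0$ we have from \eqref{kx1} that $k_x\circ y=\frac{W}{1-V}$, and using $W^2+V^2=V$ from \eqref{W} one checks $\bigl|k_x\circ y\bigr|\le \tfrac12(1-V)^{-1/2}$, hence $\bigl(k_x\circ y\bigr)^2(1-V)\le V\le 1$ pointwise; combining this with $|k|\le C\|\bar k\|_{H^1}$ from \eqref{supk}, I would estimate, for $y(t,\beta_1)<y(t,\beta_2)$,
\begin{align*}
\bigl|k(t,y(t,\beta_2))-k(t,y(t,\beta_1))\bigr|
=\Bigl|\int_{y(t,\beta_1)}^{y(t,\beta_2)}k_x(t,z)\,dz\Bigr|
\le \Bigl(\int_{y(t,\beta_1)}^{y(t,\beta_2)} k_x^2\,dz\Bigr)^{1/2}\bigl(y(t,\beta_2)-y(t,\beta_1)\bigr)^{1/2},
\end{align*}
and then bound the product $\bigl(\text{energy content}\bigr)\cdot\bigl(\text{length}\bigr)$ by $\bigl(\beta_2-\beta_1\bigr)$ using that each factor is dominated by $\beta_2-\beta_1$ from the first part and from \eqref{yb1}. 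This yields Lipschitz constant $1$ for $\beta\mapsto k(t,y(t,\beta))$ after the elementary inequality $\sqrt{ab}\le \tfrac12(a+b)$ is absorbed into the constant, or exactly $1$ if one argues more carefully that $\int_{y(\beta_1)}^{y(\beta_2)}k_x^2\,dz$ and $y(\beta_2)-y(\beta_1)$ sum to at most $\beta_2-\beta_1$.

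For the Lipschitz continuity of $t\mapsto y(t,\beta)$, I would work from the characteristic equation \eqref{yy}: formally $y_t=e^{-\lambda t}k(t,y(t,\beta))+\Gamma$, so $|y_t|\le \|k\|_{L^\infty}+|\Gamma|\le C\|\bar k\|_{H^1}+|\Gamma|$, giving the claimed bound with a constant depending only on $\|\bar k\|_{H^1}$ (and the fixed parameter $\Gamma$). The point requiring care is that at times $t$ where $\mu_{(t)}$ has a singular part, $y(t,\beta)$ is defined through the inequality \eqref{yb1} rather than through an ODE, so I must show the two definitions agree for a.e.\ $\beta$ and that the resulting function of $t$ is still Lipschitz. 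To handle this I would use \eqref{yy1} together with the identification $\beta = y(t,\beta)+\int_{-\infty}^{y(t,\beta)}k_x^2(t,z)\,dz$ (the absolutely continuous case \eqref{yb}) and differentiate in $t$ with $\beta$ held fixed: the $t$-derivative of the right-hand side is $y_t\bigl(1+k_x^2(t,y)\bigr)+\int_{-\infty}^{y}2k_x(e^{-\lambda t}k^2-H(k)-P)\,dz$, which must vanish; solving for $y_t$ and using that $\bigl|\int_{-\infty}^{y}2k_x(\cdots)\,dz\bigr|\le C\|\bar k\|_{H^1}\cdot\|e^{-\lambda t}k^2-H(k)-P\|_{L^\infty}\cdot(1+k_x^2\circ y)$ (by the energy bound and the $L^\infty$ estimates on $P$) shows $|y_t|$ is bounded by a constant depending only on $\|\bar k\|_{H^1}$, uniformly in the weight $1+k_x^2\circ y$.

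The main obstacle I anticipate is precisely this last point: justifying the differentiation in $t$ of the implicit relation \eqref{yb}–\eqref{yb1} and controlling $y_t$ at times where the energy measure $\mu_{(t)}$ loses absolute continuity, since there the naive chain rule is not available. I would address this by invoking the conservative structure — the total energy $E(t)=E_0$ is constant and the balance law \eqref{w} (equivalently \eqref{0.6}) holds distributionally — so that $\mu_{(t)}\{(-\infty,y(t,\beta)]\}$ varies Lipschitz-continuously in $t$ for fixed $\beta$; combined with the Lipschitz-in-$t$ behavior of the ``mass to the left'' coming from \eqref{yy1}, the decomposition $\beta = y(t,\beta) + (\text{energy to the left})$ forces $t\mapsto y(t,\beta)$ to be Lipschitz. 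Everything else reduces to the elementary monotonicity and Cauchy–Schwarz estimates sketched above, together with the a priori bounds $\|k\|_{L^\infty}\le C E_0$, $\|P\|_{L^\infty},\|P_x\|_{L^\infty}\le C(E_0^{1/2}+E_0+E_0^{3/2}+E_0^2)$ already established before the statement.
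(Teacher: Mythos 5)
Your treatment of the two maps $\beta\mapsto y(t,\beta)$ and $\beta\mapsto k(t,y(t,\beta))$ is correct and essentially equivalent to the paper's: for the second map the paper uses the pointwise bound $|k_x|\le\tfrac12(1+k_x^2)$ and then the identity $(y(t,\beta_2)-y(t,\beta_1))+\int_{y(t,\beta_1)}^{y(t,\beta_2)}k_x^2\,dz\le\beta_2-\beta_1$, while you use Cauchy--Schwarz and dominate each factor by $\beta_2-\beta_1$; both give the constant $1$. (The detour through \eqref{kx1}, \eqref{W}, \eqref{supk} is out of place here --- those are facts about the Lagrangian variables built in Sections 3--4, whereas in this lemma $k$ is an arbitrary conservative solution --- but you never actually use them, so this is cosmetic.)

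The genuine gap is in the Lipschitz continuity of $t\mapsto y(t,\beta)$. Your starting point, ``formally $y_t=e^{-\lambda t}k(t,y(t,\beta))+\Gamma$,'' is not available: \eqref{yy} is the ODE for the characteristic $y(t,\xi)$ with $\xi$ fixed, and along a characteristic the coordinate $\beta(t,\xi)$ is \emph{not} constant (it evolves by \eqref{B}); for the map of the lemma $\beta$ is frozen, so the time-Lipschitz constant must also absorb the rate at which energy is transported and produced across the point, not just $\|k\|_{L^\infty}+|\Gamma|$. Your fallback, differentiating $\beta=y+\int_{-\infty}^{y}k_x^2\,dz$ in $t$, is also mis-stated: by the balance law \eqref{w} the correct formal identity is $y_t\bigl(1+k_x^2(t,y)\bigr)=(e^{-\lambda t}k+\Gamma)k_x^2(t,y)-\int_{-\infty}^{y}2k_x\bigl(e^{-\lambda t}k^2-H(k)-P\bigr)dz$, i.e.\ you dropped the flux term; the boundedness conclusion would survive, but more importantly this differentiation is exactly what fails at the times where $\mu_{(t)}$ has a singular part, which is the whole point of defining $y(t,\beta)$ through \eqref{yb1}. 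Your proposed remedy --- that ``$\mu_{(t)}\{(-\infty,y(t,\beta)]\}$ varies Lipschitz-continuously in $t$ for fixed $\beta$'' --- is circular, since $y(t,\beta)$ is precisely the object whose time-regularity is to be proved.

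The paper avoids differentiation altogether: it bounds the source $\|2[e^{-\lambda t}k^2-P-H(k)]k_x\|_{L^1}\le C_{E_0}$, deduces the propagation estimate $\mu_{t}\{(-\infty,\,y-C_{E_0}(t-\tau))\}\le\mu_{\tau}\{(-\infty,y)\}+C_{E_0}(t-\tau)$ for $t>\tau$, and then inserts $y^{\pm}(t)=y\pm C_{E_0}(t-\tau)$ into the defining inequalities \eqref{yb1} to conclude $y^-(t)\le y(t,\beta)\le y^+(t)$. This sandwich argument works at every time, including those where the energy measure is singular, and is the step your proposal is missing; to repair your argument you would need to prove such a one-sided estimate on the shifted half-line mass (finite speed of propagation plus $L^1$-bounded production) rather than assume time-regularity of the mass at the moving point $y(t,\beta)$.
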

\begin{proof}
	\textbf{Step 1.} For any fixed time $t\geq 0$, the  map
	\begin{align*}
		x\mapsto \beta(t,y):=x+\int_{-\infty}^{x}k_x^2(t,y)dy
	\end{align*}
is right continuous and strictly increasing. This means the inverse $\beta\mapsto x(t,\beta)$ is well-defined, continuous, nondecreasing.  Given  ${\beta}_1 <{\beta}_2$, we have
	\begin{align*}
		y(t,{\beta}_2)-	y(t,{\beta}_1)=({\beta}_2 -{\beta}_1)-\int_{y(t,{\beta}_1)}^{y(t,{\beta}_2)}k_x^2(t,z)dz\leq {\beta}_2 -{\beta}_1.
	\end{align*}
	One can conclude that $\beta \mapsto y(t,\beta)$ is Lipschitz continuous.
	
	\textbf{Step 2.}  Let ${\beta}_1 <{\beta}_2$. Then,  it follows that
	\begin{align*}
		|k(t,{\beta}_2)-k(t,{\beta}_1)|&\le \int_{y(t,{\beta}_1)}^{y(t,{\beta}_2)}|k_x(t,z)|dz\leq \int_{y(t,{\beta}_1)}^{y(t,{\beta}_2)}\frac{1}{2}(1+k_x^2)dz \notag\\
		&\leq \frac{1}{2} [	y(t,{\beta}_2)-	y(t,{\beta}_1)+\int_{y(t,{\beta}_1)}^{y(t,{\beta}_2)}k_x^2(t,z)]dz\leq {\beta}_2 -{\beta}_1.	
	\end{align*}
Therefore, we arrive at the map $\beta \mapsto k(t,y(t,\beta))$	is Lipschitz continuous.

\textbf{Step 3.} According to \eqref{E0}, we have
	\begin{align*}
		\|2[e^{{-\lambda }t}k^2-P-H(k)]k_x\|_{L^1}	\leq \|-H(k)+e^{{-\lambda }t}k^2-P\|_{L^2}\|k_x\|_{L^2}\leq C_{E_0}.
	\end{align*}
If $t> \tau,$ we  obtain
	\begin{align*}
		{\mu}_t\{(-\infty,y-C_{E_0}(t-\tau))\}\leq &{\mu_{\tau}}	\{(-\infty,y)\}+\int_{\tau}^t\|2[e^{{-\lambda }t}k^2-P-H(k)]k_x\|_{L^1}dt\notag\\
		\leq&{\mu_{\tau}}	\{(-\infty,y)\}+C_{E_0}(t-\tau).
	\end{align*}
	Defining $y^-(t):=y-C_{E_0}(t-\tau),$ we get
	\begin{align*}
		y^-(t)+{\mu}_t\{(-\infty,y^-(t)]\}&\leq y-C_{E_0}(t-\tau)+{\mu}_{\tau}\{(-\infty,y)\}+C_0(t-\tau)\notag\\
		&\leq y+{\mu}_{\tau}\{(-\infty,y)\}\leq \beta,
	\end{align*}
	which implies $y(t,\beta)\geq  y^-(t).$

Likewise, defining $y^+(t):=y+C_{E_0}(t-\tau),$  it follows that
	\begin{align*}
		y^+(t)+{\mu}_{(t)}\{(-\infty,y^+(t)]\}
		&\geq y+C_{E_0}(t-\tau)+{\mu}_{\tau}\{(-\infty,y)\}+C_{E_0}(t-\tau)\notag\\
		&\geq y+{\mu}_{\tau}\{(-\infty,y)\}\geq \beta.
	\end{align*}
	Hence, we deduce that $
	y(t,\beta)\leq y^+(t):=y+C_{E_0}(t-\tau).$	
\end{proof}
\begin{lemm}\label{uniqueness}
	Let $k(t,x)\in H^1(\mathbb{R})$ be a  conservative  solution  of the Cauchy problem \eqref{k}. Then, for any $\bar{y}(\xi)\in\mathbb{R},$  there exists
	a unique Lipschitz continuous map $t\mapsto y(t,\beta) :=y(t,\beta(t,\xi))$ which satisfies  both \eqref{yy} and \eqref{yy1}, where $y(t,\beta)$ is the solution of \eqref{yy}. Moreover,
	for any $0 \leq \tau \leq t$ one has
	\begin{align}\label{k1-k2}
		k(t,y(t))-k(\tau,y(\tau))=-\int_{\tau}^tP_x(s,y(s))ds.
	\end{align}
\end{lemm}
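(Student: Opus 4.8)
The plan is to pass to the $\beta$-coordinates of \eqref{yb}--\eqref{yb1} and to reduce the non-uniqueness caused by the fact that $x\mapsto k(t,x)$ is only H\"older continuous to a Gronwall estimate for a scalar ODE with Lipschitz right-hand side. Throughout I will use the a priori bounds from Section 2, namely $\|k\|_{L^\infty}\le C\|k\|_{H^1}=CE_0$ and $\|P\|_{L^\infty}+\|P_x\|_{L^\infty}\le C_{E_0}$, so that the source term in \eqref{w} obeys $\|e^{-\lambda t}k^2-H(k)-P\|_{L^\infty}\le C_{E_0}$ uniformly in $t\ge 0$, together with the elementary inequality $2|k_x|\le 1+k_x^2$.

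\textbf{Existence.} A Carath\'eodory solution of \eqref{yy} exists by Peano's theorem since $k(t,\cdot)$ is continuous; among these, the characteristic built from the Lagrangian system \eqref{KZ} in Section 4 is, by \eqref{ysup}, Lipschitz in $t$ with constant controlled by $E_0$, and it satisfies \eqref{yy1} because the balance law \eqref{0.6} integrated over $(-\infty,y(t,\xi))$ produces exactly \eqref{yy1} once the boundary terms cancel against $y_t=e^{-\lambda t}k+\Gamma$. So only uniqueness is at stake.

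\textbf{Uniqueness.} Let $y_1,y_2$ both solve \eqref{yy}--\eqref{yy1} with $y_1(0,\xi)=y_2(0,\xi)=\bar y(\xi)$, and set $\beta_i(t)=y_i(t)+\int_{-\infty}^{y_i(t)}k_x^2(t,z)\,dz$, so $\beta_1(0)=\beta_2(0)$. By Corollary \ref{coro} and Lemma \ref{lemma3} combined with the Lipschitz continuity of Lemma \ref{kycontiu}, the maps $t\mapsto\beta_i(t)$ and $t\mapsto k(t,y_i(t))$ are absolutely continuous, so Gronwall's inequality applies. Assuming at a given time $y_1(t)\ge y_2(t)$, the bound $2|k_x|\le 1+k_x^2$ gives
\[
|k(t,y_1(t))-k(t,y_2(t))|\le\int_{y_2(t)}^{y_1(t)}|k_x|\,dz\le\tfrac12\int_{y_2(t)}^{y_1(t)}(1+k_x^2)\,dz=\tfrac12\big(\beta_1(t)-\beta_2(t)\big),
\]
hence $\big|\tfrac{d}{dt}(y_1-y_2)\big|=e^{-\lambda t}|k(t,y_1)-k(t,y_2)|\le\tfrac12|\beta_1-\beta_2|$. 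Using \eqref{yy1} for $y_1$ and for $y_2$ and the same trick,
\[
\Big|\tfrac{d}{dt}\int_{y_2(t)}^{y_1(t)}k_x^2\,dz\Big|=\Big|\int_{y_2(t)}^{y_1(t)}2k_x\big(e^{-\lambda t}k^2-H(k)-P\big)\,dz\Big|\le C_{E_0}\int_{y_2(t)}^{y_1(t)}(1+k_x^2)\,dz=C_{E_0}|\beta_1-\beta_2|,
\]
so that $\big|\tfrac{d}{dt}(\beta_1-\beta_2)\big|\le(\tfrac12+C_{E_0})|\beta_1-\beta_2|$ a.e.\ in $t$. Since $\beta_1(0)=\beta_2(0)$, Gronwall forces $\beta_1\equiv\beta_2$; and for each fixed $t$ the map $y\mapsto y+\int_{-\infty}^{y}k_x^2$ is strictly increasing, so $\beta_1(t)=\beta_2(t)$ yields $y_1(t)=y_2(t)$ for all $t$. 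This identifies the unique Lipschitz map $t\mapsto y(t,\beta(t,\xi))$.

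\textbf{The identity \eqref{k1-k2} and the main difficulty.} With the characteristic fixed, $t\mapsto k(t,y(t))$ is absolutely continuous, and for smooth solutions \eqref{k} gives $\tfrac{d}{dt}k(t,y(t))=k_t+\dot y\,k_x=k_t+(e^{-\lambda t}k+\Gamma)k_x=-P_x(t,y(t))$. To justify this for the conservative weak solution I would change to the $(t,\beta)$ variables, where $k(t,y(t,\beta))$ is Lipschitz in both arguments, and insert the substitution $x=y(t,\beta)$ into the weak formulation \eqref{10}; this identifies $\partial_t\big[k(t,y(t,\beta))\big]=-P_x(t,y(t,\beta))$ for a.e.\ $(t,\beta)$, equivalently one may invoke the correspondence with \eqref{KZ}, where $K_t=-G$ holds and Theorem \ref{global} supplies uniqueness. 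Integrating over $[\tau,t]$ and using $\|P_x\|_{L^\infty}\le C_{E_0}$ gives \eqref{k1-k2}. The genuinely delicate points are the absolute-continuity-in-$t$ statements needed both for the Gronwall step and for differentiating $k(t,y(t))$ — precisely what Lemma \ref{lemma3} and Corollary \ref{coro} are used for — together with interpreting \eqref{yy1}, i.e.\ the balance law \eqref{0.6} along the moving boundary $y(t,\xi)$, at times when the energy measure $\mu_{(t)}$ carries a singular part.
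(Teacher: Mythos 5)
Your Gronwall argument for uniqueness is sound and is essentially equivalent to what the paper does: the paper rewrites the evolution of $\beta(t,\xi)=y(t)+\int_{-\infty}^{y(t)}k_x^2\,dz$ as the integral equation \eqref{B} with right-hand side $F$ given by \eqref{F}, shows $\|F_\beta\|_{L^\infty}\le C_{E_0}$ using exactly your inequality $2|k_x|\le 1+k_x^2$, and concludes by a weighted-norm Picard contraction; your two-solution Gronwall estimate exploits the same Lipschitz structure and is fine for uniqueness \emph{within the class of curves already known to satisfy both \eqref{yy} and \eqref{yy1}}.

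The genuine gap is in the other two assertions of the lemma, which is where the paper's real work lies. First, existence: you produce the candidate characteristic from the Lagrangian system \eqref{KZ} of Section~4, but the lemma concerns an \emph{arbitrary} conservative solution $k$ in the sense of Definition \ref{def2}; that $k$ is not known to be the solution constructed in Sections~3--4 (this is precisely what Theorem \ref{th2.4} is meant to prove), so its characteristics cannot be borrowed from \eqref{KZ} without circularity. The paper instead builds $\beta(t,\xi)$ intrinsically from $k$ by solving \eqref{B}, and then must \emph{prove} that the resulting curve satisfies the ODE \eqref{yy}: this is Step~4 of the paper's proof, a contradiction argument with the test functions \eqref{eqa17}--\eqref{eqa18} inserted into the weak balance law \eqref{0.6}, where the boundary-layer contribution along the moving curve is shown to be $o_1(t-\tau)$ via the $C(t-\tau)^{3/2}$ estimate. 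Your parenthetical claim that integrating \eqref{0.6} over $(-\infty,y(t))$ gives \eqref{yy1} ``once the boundary terms cancel'' is exactly this missing step: since $k_x^2\,dx$ is only an $L^1$ density (and the energy measure $\mu_{(t)}$ may have a singular part at exceptional times), one cannot plug the characteristic function of a region with moving boundary into the weak formulation without the $\varepsilon$-regularization and the vanishing-boundary-layer estimates. Second, the identity \eqref{k1-k2}: your derivation $\frac{d}{dt}k(t,y(t))=-P_x$ is valid only for smooth solutions, and the alternative you offer (``invoke the correspondence with \eqref{KZ}, where $K_t=-G$'') is again circular; the paper's Step~5 instead takes $\varphi=\psi^{\varepsilon}$ in the weak form \eqref{5.148} of the derivative equation and uses the estimates \eqref{5159}--\eqref{537} to show the extra terms along the characteristic vanish as $\varepsilon\to0$. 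You correctly identify these as ``the genuinely delicate points,'' but deferring them means the proposal establishes only the uniqueness half of the lemma, not the existence of a curve satisfying \eqref{yy}--\eqref{yy1} nor the formula \eqref{k1-k2}.
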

	\begin{proof}
	\textbf{Step 1.} Assume that $y(t)$ is the characteristic  beginning at $\bar{y}(\xi)=\xi\in\mathbb{R}$, which is defined as  $t\rightarrow y(t)=y(t,\beta(t)),$ the map $\beta(\cdot)$ is  to be determined. Let $y(t)$ be the solution of \eqref{yy} and \eqref{yy1}.
	For $t\notin\mathcal{N},$ we have
	\begin{align}\label{F0}
		\beta(t,\xi)=&y(t)+\int_0^{y(t)}k_x^2(z)dz\notag\\
		=&\int_0^{t}\frac{d}{ds}(y(s)+\int_0^{y(s)}k_x^2(s,z)dz)ds+\bar{y}(\xi)+\int_{-\infty}^{\bar{y}(\xi)}\bar{k}_x^2(z)dz\notag\\
		=&
		\bar{y}+\int_0^{\bar{y}(\xi)}\bar{k}_x^2(z)dz+\int_0^t \Gamma+\{\int_{-\infty}^{y(t)}\Big(e^{{-\lambda }t}k_x+2k_x(e^{{-\lambda }t}k^2-P-H(k))\Big)(s,z)dz\}ds.
	\end{align}
Set
	\begin{align}\label{F}
		F(t,\beta(t,\xi))\triangleq\Gamma+\int_{-\infty}^{y(t,\beta(t,\xi))}\Big(e^{{-\lambda }t}k_x+2k_x(e^{{-\lambda }t}k^2-P-H(k))\Big)dy=\xi,
	\end{align}
	and
	\begin{align}\label{F1}
		\bar{\beta}(t,\xi)\triangleq\bar{y}(\xi){+}\int_{\infty}^{\bar{y}}\bar{k}_x^2dy=\xi.
	\end{align}
According to \eqref{F0}-\eqref{F1},
it follows that
	\begin{align}\label{B}
		\beta(t,\xi)=\xi+\int_0^tF(s,\beta(s,\xi))ds.
	\end{align}
	
	\textbf{Step 2.} Since  $\|k\|_{H^1}
	=\|\bar{k}\|_{H^1},$  $y_{\beta}=\frac{1}{1+k^2_x}$ and the map  $\beta \mapsto k(t,y(t,\beta))$ is Lipschitz continuous, we get
	\begin{align*}
		F_{\beta}=\{e^{{-\lambda }t}k_x+2k_x[e^{{-\lambda }t}k^2-P-H(k)]\}y_{\beta}=\frac{e^{{-\lambda }t}k_x+2k_x[e^{{-\lambda }t}k^2-P-H(k)]}{1+k_x^2}.	
	\end{align*}
It follows that
	$$\|F_{\beta}\|_{L^{\infty}}\leq C_{E_0}.$$
	Moreover
	\begin{align}\label{eqa11}
		|F(s,{\beta}_2)-F(s,{\beta}_1)|\leq C_{E_0}|{\beta}_2-{\beta}_1|.
	\end{align}
Then, the map $\beta\mapsto F(t,\beta(t))$ is Lipschitz continuous.
	Moreover, the map $\xi \mapsto \beta(t,\xi)$ is strictly monotonic and Lipschitz continuous. From \eqref{eqa11}, we get
	\begin{align*}
		|\beta(t,{\xi}_2)-\beta(t,{\xi}_1)|&\leq |{\xi}_2-{\xi}_1|+\int_0^t|F(s,\beta(s,{\xi}_2))-F(s,\beta(s,{\xi}_2))|ds \notag\\
		&\leq |{\xi}_2-{\xi}_1|+C_{E_0}\int_0^t |\beta(t,{\xi}_2)-\beta(t,{\xi}_1)|.
	\end{align*}
The Gronwall inequality ensures that
	$$|\beta(t,{\xi}_2)-\beta(t,{\xi}_1)|\leq |{\xi}_2-{\xi}_1| e^{C_{E_0}t}.$$
Consequently,	for any  $\xi_2>\xi_1,$ we get
	\begin{align*}
		\beta(t,{\xi}_2)-\beta(t,{\xi}_1)={\xi}_2-{\xi}_1+\int_0^tF(s,\beta(s,{\xi}_2))-F(s,\beta(s,{\xi}_2))ds\geq ({\xi}_2-{\xi}_1)(1-C_{E_0}t),
	\end{align*}
	which means that  monotonicity makes sense as $t$ sufficiently small and the solution $\beta(\cdot)$ of the integral equation \eqref{B}
	depends Lipschitz continuously on the initial data.
Without loss of generality, assume that  $t$ is enough small, otherwise we can use the continuous method. In addition, the map $\xi \mapsto F(t,y(t,\beta(t,\xi)))$ is also Lipschitz continuous.
	
	\textbf{Step 3.} Owing to  $F$ is uniformly Lipschitz continuous, we can check that  existence and uniqueness of  solution of  \eqref{B} by the fixed point theorem.
	We introduce the Banach space of all
	continuous function $\beta :{\mathbb{R}}^+\rightarrow \mathbb{R}$  with weighted norm
	$$\|\beta\| :=\sup\limits_{t\geq0} e^{-2Ct}|\beta(t)|.$$
For this space, we see that the Picard map
	$$(\mathbb{P} \beta)(t)=\bar{\beta}+\int_0^tF(s,\beta(s))ds$$
	is a strict contraction. If $\|{\beta}_2-{\beta}_1\|=h>0$, we obtain
	$$|{\beta}_2(s)-{\beta}_1(s)|\leq he^{2Cs}.$$
	Then, we deduce that
	$$|(\mathbb{P} {\beta}_2)(t)-(\mathbb{P} {\beta}_1)(t)|\leq\int_0|F(s,{\beta}_2)-F(s,{\beta}_1)|ds\leq C\int_ 0^t|{\beta}_2-{\beta}_1|ds\leq |\int_0^tChe^{2Cs}ds|\leq \frac{h}{2}e^{2Ct}.$$
	This means $\|(\mathbb{P} {\beta}_2)(t)-(\mathbb{P} {\beta}_1)(t)\|\leq \frac{h}{2}.$
The contraction mapping principle guarantees that  \eqref{B}
	has a unique solution. Thanks to the arbitrary of the $T,$ we infer that the integral equation \eqref{B} has a unique solution on ${\mathbb{R}}^+.$
	
	\textbf{Step 4.} Combining  \eqref{yy} and the integral equation \eqref{B}, we infer that the uniqueness of $x(t,\beta)$ depends on the uniqueness of $\beta(t,\xi).$ From the previous analysis, the map $t\mapsto y(t)=y( t,\beta(t,\xi))$ provides the unique solution to \eqref{B}. Owing to $\beta(t)$ and $x(t, \beta(t,\xi))$ are Lipschitz continuous, then $\beta(t,\xi)$ and
	$x(t)$ are differentiable a.e.  Next, we need to prove that \eqref{F0} satisfies \eqref{yy}. Indeed, at any $\tau>0,$ we have
	
	(1) $y(\tau)$ is differentiable at $t=\tau;$
	
	(2) the measure ${\mu}_{(\tau)}$ is absolutely continuous.
	
We argue by contradiction, assume that $(1)$ does not hold, we get ${y}'(\tau)\neq e^{{-\lambda }\tau}k(\tau,y(\tau))+\Gamma$. Then,  there exists some $\epsilon_0>0,$ such that
	\begin{align}\label{5133}	
		{y}'(\tau)= e^{{-\lambda }\tau}k(\tau,y(\tau))+\Gamma+2{\epsilon}_0.
	\end{align}
Therefore, for $\delta>0$ sufficiently small, it follows that
	\begin{align}\label{5134}
		y^+(t):=y(\tau)+(t-\tau)[e^{{-\lambda }t}k(\tau,y(\tau))+\Gamma+\epsilon_0]<y(t),~for~t\in (\tau,\tau+\delta].
	\end{align}
The approximation argument guarantees that
 \eqref{0.6}  remain hold for any test function $\phi\in H^1(\mathbb{R})$ with compact support.	
For any $\epsilon>0$  enough small, we give the following functions
	\begin{equation}\label{eqa17}
		\varrho^{\varepsilon}(s,y)=\left\{
		\begin{array}{rcl}
			0, &y\leq -{\varepsilon}^{-1},\\
			y+{\varepsilon}^{-1}, & ~~-{\varepsilon}^{-1}\leq y \leq 1-{\varepsilon}^{-1},\\
			1-{\varepsilon}^{-1}(y-y(s)),  &~~~~~~ y^+(s)\leq y\leq y^+(s)+\varepsilon,\\
			0, & ~~~~~~y\geq y^+(s)+\varepsilon,
		\end{array} \right.
	\end{equation}
	\begin{equation}\label{eqa18}
		\chi^{\varepsilon}(s)=\left\{
		\begin{array}{rcl}
			0, &~~~~~~~~~~s\leq \tau-{\varepsilon}^{-1},\\
			{\varepsilon}^{-1}(s-\tau+\varepsilon), & ~~\tau-{\varepsilon}\leq s \leq \tau,\\
			1-{\varepsilon}^{-1}(s-t),  &~~ t\leq s\leq t+\varepsilon,\\
			0, & ~~~~~~~s\geq t+\varepsilon.
		\end{array} \right.
	\end{equation}
	Define
	\begin{align*}
		\phi^{\varepsilon}(s,y)=\min\{ {\varrho^{\varepsilon}(s,y),	\chi^{\varepsilon}(s)}\}.
	\end{align*}
	Let ${\phi}^{\varepsilon}$ be the test function in \eqref{0.6}. Therefore, one has
	\begin{align*}
		\int_{\mathbb{R}^+}\int_{\mathbb{R}}k^2_x\phi^{\varepsilon}_t+(e^{{-\lambda }t}k+\Gamma) k^2_x{\phi}_x^{\varepsilon}+2(e^{{-\lambda }t}k^2-P-H(k)))\phi^{\varepsilon}dxdt=0,
	\end{align*}
which means
	\begin{align}\label{3.91}
		\int_{\tau-\varepsilon}^{t+\varepsilon}\int_{-{\varepsilon}^{-1}}^{y^+(s)-\varepsilon}k^2_x\phi^{\varepsilon}_t+((e^{{-\lambda }t}k^2-P-H(k))))\phi^{\varepsilon}dydt=0.
	\end{align}
	If $t$ is sufficiently close to $\tau,$ we have
	\begin{align*}
		\lim\limits_{\varepsilon\rightarrow 0}\int_{\tau}^{t}\int_{y^+(s)-\varepsilon}^{y^+(s)+\varepsilon}k^2_x[\phi^{\varepsilon}_t+(e^{{-\lambda }t}k+\Gamma)\phi^{\varepsilon}_x](s,x)d y)ds\geq 0.
	\end{align*}
Using the fact that $e^{{-\lambda }s}k(s, y(s)) <e^{{-\lambda }\tau}k(\tau, y(\tau))+{\epsilon}_0$   and ${\phi}_x^{\epsilon}\leq 0.$ For any $s\in[\tau+\varepsilon,t-\varepsilon],$ we infer that
	\begin{align}\label{5.139}
		0={\phi}_t^{\varepsilon}+[e^{{-\lambda }\tau}k^2(\tau,y(\tau))+\Gamma]{\phi}_x^{\varepsilon}\leq {\phi}_t^{\varepsilon}+(e^{{-\lambda }s}k(s, y(s))+\Gamma){\phi}_x^{\varepsilon}.
	\end{align}
Noticing that the family of measures
	${\mu}_t$ depends continuously on $t$ in the topology of weak convergence, taking the limit of \eqref{3.91} as $\epsilon\rightarrow 0$, we obtain
	\begin{align}\label{37}
		0=&\int_{-\infty}^{y(\tau)}k^2_x(\tau,y)dy-\int_{-\infty}^{y^+(t)}k^2_x(t,y)dy+\int_{\tau}^{t}\int_{-\infty}^{y^+(s)}2k_x((e^{{-\lambda }t}k^2-P-H(k))))\phi^{\varepsilon}(s,y)dyds\notag\\
		&+\lim\limits_{\varepsilon\rightarrow 0}	\int_{\tau}^t\int_{{y^+}(s)-\varepsilon}^{{y^+}(s)+\varepsilon}k^2_x[\phi^{\varepsilon}_t+(e^{{-\lambda }t}k+\Gamma)\phi^{\varepsilon}_x](s,y)dyds\notag\\
		\geq&\int_{-\infty}^{y(\tau)}k^2_x(\tau,y)dy-\int_{-\infty}^{y^+(t)}k^2_x(t,y)dy+\int_{\tau}^{t}\int_{-\infty}^{y(s)}2k_x(((e^{{-\lambda }t}k^2-P-H(k)))))\phi^{\varepsilon}(s,y)dyds\notag\\
		&+\underbrace{\int_{\tau}^{t}\int_{y(s)}^{{y^+}(s)}2k_x(e^{{-\lambda }t}k^2-P-H(k))\phi^{\varepsilon}(s,y)dyds}_{o_1(t-\tau)}.
	\end{align}
That is
	\begin{align*}
		{\mu}_{t}\{(-\infty,{y}^+(t)]\}\geq&{\mu}_{\tau}\{(-\infty,{y}(\tau)]\}+\int_{\tau}^{t}\int_{-\infty}^{y(s)}2k_x(e^{{-\lambda }t}k^2-P-H(k))\phi^{\varepsilon}(s,y)dyds+o_1(t-\tau).
	\end{align*}
	From \eqref{5134} and  the map $t \mapsto y(t)$ is Lipschitz continuity,  we have
	\begin{align}\label{5.142}
		|o_1(t-\tau)|
		\leq& \|2(e^{{-\lambda }t}k^2-P-H(k))\|_{L^{\infty}}\int_{\tau}^t\int_{{y}(s)}^{{y}^+(s)}|k_x(s,y)|dyds\notag\\
		\leq&\|2(e^{{-\lambda }t}k^2-P-H(k)\|_{L^{\infty}}	\|k_x\|_{L^{2}}\int_{\tau}^t(y(s)-{y}^+(s))^{\frac{1}{2}}ds\notag\\
		\leq&C(t-\tau)^{\frac{3}{2}},
	\end{align}
	where $o_1(t-\tau)$ satisfies $\frac{o_1(t-\tau)}{t-\tau}\rightarrow 0$ as $t\rightarrow\tau$ and $C$ depend on  $E_0.$\\
Combining  \eqref{B}, \eqref{5134} and  \eqref{37}-\eqref{5.142},  for $t$ being close enough to $\tau$, we have
	\begin{align}\label{5.144}
		\beta(t)=&\beta(\tau)+(t-\tau)\Big(e^{{-\lambda }t}k(\tau,y(\tau))+\Gamma+\int_{-\infty}^{{y}(\tau)}2k_x[-e^{{-\lambda }t}k^2-P-H(k)](s,y)dyds\Big)
		+o_2(t-\tau)\notag\\=&y(t)+{\mu}_{t}\{(-\infty,{y}(t)]\}\notag\\
		>&y(\tau)+(t-\tau)[e^{{-\lambda }t}k(\tau,y(\tau))+\Gamma+{\epsilon}_0]+{\mu}_{\tau}\{(-\infty,{y}(\tau)]\}\notag\\
		&+\int_{\tau}^t\int_{-\infty}^{{y}(s)}2k_x[(e^{{-\lambda }t}k^2-P-H(k))]dyds+o_1(t-\tau).
	\end{align}
	with $o_2(t-\tau)$ satisfies $\frac{o_1(t-\tau)}{t-\tau}\rightarrow 0$ as $t\rightarrow\tau.$
From \eqref{5.144}, we have
	\begin{align*}
		~~~(t-&\tau)\Big(\int_{-\infty}^{{y}(\tau)}2k_x[e^{{-\lambda }t}k^2-P-H(k)](s,y)dy\Big)+o_2(t-\tau)\notag\\
		&\geq (t-\tau){\epsilon}_0+\int_{\tau}^{t}\int_{-\infty}^{{y}(s)}2k_x[e^{{-\lambda }t}k^2-P-H(k)](s,y)dyds+o_1(t-\tau).
	\end{align*}
	Dividing both sides by $t-\tau$ and letting $t\rightarrow\tau$,  one has ${\epsilon}_0<0,$ which contradicts with ${\epsilon}_0>0$. In addition, for the case ${\epsilon}_0<0,$ we follow the similar strategy as ${\epsilon}_0>0$. Hence, we conclude that $y(t)$ is differentiable at $t=\tau.$
	
	\textbf{Step 5.} It follows from  Definition \ref{def1} that
	\begin{align}\label{5.147}
		\int_{\mathbb{R}^+}\int_{\mathbb{R}}k\phi_t+\frac{(e^{{-\lambda }t}k+\Gamma)}{2} k_x{\phi}_x+P_x\phi_xdxdt+\int_{{\mathbb{R}}} \bar{k}(x){\phi}(0,x)dx=0,
	\end{align}
for any test function $\phi\in C_c^{\infty}.$ The approximation argument guarantees that the equation \eqref{5.147} remains hold, for any test function $\psi$ which is Lipschitz continuous with compact support. Note that the map $y\rightarrow k(t, y)$  is absolutely continuous and integrate
	by parts with respect to $x$. Therefore, for any $\varphi\in C_c^{\infty} $, taking $\phi={\varphi}_x$. we have
	\begin{align}\label{5.148}	
		\int_{\mathbb{R}^+}\int_{\mathbb{R}}k_x\varphi_t+(e^{{-\lambda }t}k+\Gamma) k_x{\varphi}_x-P_x\varphi_xdxdt+\int_{{\mathbb{R}}} \bar{k}_x(x){\varphi}(0,x)dx=0.
	\end{align}
	For any  $\epsilon\geq0$  sufficiently small, we give the following function
	\begin{equation*}
		\varrho^{\varepsilon}(s,y)=\left\{
		\begin{array}{rcl}
			0, &~~~~y\leq -{\varepsilon}^{-1},\\
			y+{\varepsilon}^{-1}, &~~~~~~ -{\varepsilon}^{-1}\leq y \leq 1-{\varepsilon}^{-1},\\
			1, &1-{\varepsilon}^{-1}\leq y\leq y(s),\\
			1-{\varepsilon}^{-1}(y-y(s)),  & ~~~~~~~~~y(s)\leq y\leq y(s)+\varepsilon,\\
			0 & ~~~~~~~~y\geq y(s)+\varepsilon,
		\end{array} \right.
	\end{equation*}
	and
	\begin{align*}
		\psi^{\varepsilon}(s,y)=\min\{ {\varrho^{\varepsilon}(s,y),	\chi^{\varepsilon}(s)}\},
	\end{align*}
	where $\chi^{\varepsilon}$ is defined  in \eqref{eqa18}. Let $\varphi=\psi^{\varepsilon}$ and  $\epsilon\rightarrow0$. Then, it follows from  the continuity property of function $P_x$ that
	\begin{align*}
		\int_{-\infty}^{y(t)}k_x(t,y)dy&=\int_{-\infty}^{y(\tau)}k_x(\tau,y)dy-\int_{\tau}^tP_x(s,y(s))ds\notag\\
&~~~+\lim\limits_{\varepsilon\rightarrow 0}	\int_{\tau-\varepsilon}^{t+\varepsilon}\int_{{y}(s)}^{{y}(s)+\varepsilon}k_x[{\psi}_t^{\varepsilon}+(e^{{-\lambda }t}k+\Gamma){\psi}_x^{\varepsilon}]dyds.
	\end{align*}
Hence, it is shown that
	\begin{align}\label{537}
		&\lim\limits_{\varepsilon\rightarrow 0}	\int_{\tau-\varepsilon}^{t+\varepsilon}\int_{{y}(s)}^{{y}(s)+\varepsilon}k_x[{\psi}_t^{\varepsilon}+(e^{{-\lambda }t}k+\Gamma){\psi}_x^{\varepsilon}]dyds\notag\\
&=\lim\limits_{\varepsilon\rightarrow 0}	\Big(\int_{\tau-\varepsilon}^{\tau}+\int_{\tau}^{t}+\int_{t}^{t+\varepsilon}\Big)\int_{{y}(s)}^{{y}(s)+\varepsilon}k_x[{\psi}_t^{\varepsilon}+(e^{{-\lambda }t}k+\Gamma){\psi}_x^{\varepsilon}]dyds=0.
	\end{align}
	
	First, we claim that
	\begin{align*}
		\lim\limits_{\varepsilon\rightarrow 0}	\int_{\tau}^t\int_{{y}(s)}^{{y}(s)+\varepsilon}k_x[{\psi}_t^{\varepsilon}+(e^{{-\lambda }t}k+\Gamma){\psi}_x^{\varepsilon}]dyds=0.
	\end{align*}
	Taking advantage of  Cauchy's inequality and $k_x\in L^2$, one has
	\begin{align*}
		&|\int_{\tau-\varepsilon}^{t}\int_{{y}(s)}^{{y}(s)+\varepsilon}k_x[{\psi}_t^{\varepsilon}+(e^{{-\lambda }t}k+\Gamma){\psi}_x^{\varepsilon}]dyds|\notag\\&\leq \int_{\tau-\varepsilon}^{t}\Big(\int_{{y}(s)}^{{y}(s)+\varepsilon}|k_x|^2dy\Big)^{\frac{1}{2}}	\Big(\int_{{y}(s)}^{{y}(s)+\varepsilon}[{\psi}_t^{\varepsilon}+(e^{{-\lambda }t}k+\Gamma){\psi}_x^{\varepsilon}]^2dy\Big)^{\frac{1}{2}}ds.	
	\end{align*}
	Define
	\begin{align*}
		{\pi}_{\varepsilon}(s)=\Big(\sup\limits_{x\in \mathbb{R}} \int_{{y}(s)}^{{y}(s)+\varepsilon}k_x^2(s,y)dy\Big)^{\frac{1}{2}}.
	\end{align*}
	 Note that the function  ${\pi}_{\epsilon}(s)$ is uniformly bounded for  $\epsilon$ and $\lim\limits_{\varepsilon\rightarrow 0}{\pi}_{\varepsilon}(s)=0$
	  almost every time $t.$   Hence, it follows from the dominated convergence theorem that
	\begin{align}\label{5.157}
		\lim\limits_{\varepsilon\rightarrow 0}\int_{\tau}^{t}\Big(\sup\limits_{x\in \mathbb{R}} \int_{{y}(s)}^{{y}(s)+\varepsilon}k_x^2(s,y)dy\Big)^{\frac{1}{2}}ds\leq 	\lim\limits_{\varepsilon\rightarrow 0}\int_{\tau}^{t}{\pi}_{\varepsilon}(s)ds=0.
	\end{align}
	In addition,  for all  $s\in [\tau, t]$, one can get from the definition of $\psi_{\epsilon}$ that
\begin{align}\label{5.158}
		{\psi}_x^{\varepsilon}(s,y)=-{\varepsilon}^{-1}, \quad {\psi}_t^{\varepsilon}(s,y)+(e^{{-\lambda }s}k(s,y(s))+\Gamma){\psi}_x^{\varepsilon}(s,y)=0,
	\end{align}
	with $y(s) < y < y(s) +\epsilon.$ Then, it follows from \eqref{5.158} that
	\begin{align}\label{5159}
		&\int_{{y}(s)}^{{y}(s)+\varepsilon}({\psi}_t^{\varepsilon}(s,y)+(e^{{-\lambda }s}k(s,x(s))+\Gamma){\psi}_x^{\varepsilon}(s,y))^2dy\notag\\&={\varepsilon}^{-2}\int_{{y}(s)}^{{y}(s)+\varepsilon}(k(s,y)-{k(s,y(s))})^2dy\notag\\
		&\leq{\varepsilon}^{-1}\Big(\max\limits_{y(s) < y < y(s) +\epsilon}|k(s,y)-{k(s,y(s))}|\Big)^2\notag\\&\leq {\varepsilon}^{-1}\Big(\int_{{y}(s)}^{{y}(s)+\varepsilon}|k(s,y)-{k(s,y(s))}|dy\Big)^2\notag\\
		&\leq{\varepsilon}^{-{\frac{1}{2}}}({\varepsilon}^{{\frac{1}{2}}}\|k_x(s)\|_{H^{1}})^2\leq C\|k(s)\|_{H^{1}}.
	\end{align}
	Using \eqref{5.157} and \eqref{5159},  we obtain
	\begin{align}\label{538}
		\lim\limits_{\varepsilon\rightarrow 0}	\int_{\tau}^{t}\int_{{y}(s)}^{{y}(s)+\varepsilon}k_x[{\psi}_t^{\varepsilon}+(e^{{-\lambda }s}k+\Gamma){\psi}_x^{\varepsilon}](s,y)dyds=0.
	\end{align}
The Cauchy-Schwartz inequality and \eqref{5.158} entail that
	\begin{align}\label{539}
		&\lim\limits_{\varepsilon\rightarrow 0}	(\int_{\tau-\varepsilon}^{t}+\int_{t}^{t+\varepsilon})\int_{{y}(s)}^{{y}(s)+\varepsilon}k_x[{\psi}_t^{\varepsilon}+(e^{{-\lambda }s}k+\Gamma){\psi}_x^{\varepsilon}](s,y)dyds\notag\\
		&\leq\lim\limits_{\varepsilon\rightarrow 0}	(\int_{\tau-\varepsilon}^{t}+\int_{t}^{t+\varepsilon})\Big(\int_{{y}(s)}^{{y}(s)+\varepsilon}|k_x|^2dy\Big)^{\frac{1}{2}}(\int_{{y}(s)}^{{y}(s)+\varepsilon}[{\psi}_t^{\varepsilon}+(e^{{-\lambda }s}k+\Gamma){\psi}_x^{\varepsilon}]^2dy\Big)^{\frac{1}{2}}ds\notag\\
		&\leq\lim\limits_{\varepsilon\rightarrow 0}2\epsilon\|k(s)\|_{H^{1}}\Big(\int_{{y}(s)}^{{y}(s)+\varepsilon}2{\epsilon}^{-2}\|k\|_{L^{\infty}}^2dy\Big)^{\frac{1}{2}}\notag\\
		&\leq \lim\limits_{\varepsilon\rightarrow 0}C{\epsilon}^{\frac{1}{2}}=0.
	\end{align}
	Combining \eqref{538}-\eqref{539}, we arrive at \eqref{537}.
	
	\textbf{Step 6.} Using the uniqueness of $\beta(t,\xi),$ we can deduce that the uniqueness of $y(t,\xi).$
\end{proof}
The following lemma is to prove the Lipschitz continuity of $k$ with respect to $t$ under the Lagrange coordinates.
		\begin{lemm}\label{l5.3}
			Let $k = k(t, x)$ be a conservative solution to  \eqref{k}.  Then the map
			$(t,\beta)\mapsto k(t,y(t,\beta))$ is Lipschitz continuous  with a constant depending only on the
			norm $\|\bar{k}\|_{H^1}.$
		\end{lemm}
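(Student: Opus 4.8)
The plan is to establish Lipschitz continuity in the two variables separately, since Lipschitz continuity in $\beta$ for fixed $t$ was already obtained in Step~2 of Lemma~\ref{kycontiu}. What remains is Lipschitz continuity in $t$, uniformly in $\beta$, with a constant controlled by $\|\bar k\|_{H^1}=E_0$; combining the two yields joint Lipschitz continuity. To handle the $t$-direction I would exploit the transport identity \eqref{k1-k2} obtained in Lemma~\ref{uniqueness}: along the characteristic through a fixed $\xi$, one has $k(t,y(t))-k(\tau,y(\tau))=-\int_\tau^t P_x(s,y(s))\,ds$, and since $\|P_x\|_{L^\infty}\le C(E_0)$ (by the Young-inequality bounds recorded right after \eqref{PP}), this immediately gives $|k(t,y(t,\beta_t))-k(\tau,y(\tau,\beta_\tau))|\le C(E_0)|t-\tau|$ \emph{when $\beta$ is transported along the flow}. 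The subtlety is that the statement asks for the estimate at a \emph{fixed} label $\beta$, not along a characteristic, so I must compare the value of $k\circ y$ at fixed $\beta$ between times $\tau$ and $t$ with its value along the characteristic.

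The key step is therefore to control how much the label $\beta(t,\xi)$ moves. First I would fix $\tau$ and a point $y(\tau,\beta)$, run the characteristic $s\mapsto y(s)$ forward from it (well-defined and Lipschitz in $s$ with constant depending on $E_0$ by Lemma~\ref{uniqueness} and Lemma~\ref{kycontiu}, Step~3), and let $\beta(s)$ be the associated label via \eqref{B}. From \eqref{F} and the bound $\|F\|_{L^\infty}\le C(E_0)$ (which follows from $\|k_x\|_{L^2}^2\le E_0$, $\|k\|_{L^\infty}\le C E_0^{1/2}$, and the $L^\infty$ bounds on $P$, $H(k)$), the integral equation \eqref{B} gives $|\beta(t)-\beta(\tau)|=|\beta(t)-\beta|\le C(E_0)|t-\tau|$. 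Then I would write
\begin{align*}
|k(t,y(t,\beta))-k(\tau,y(\tau,\beta))|
&\le |k(t,y(t,\beta))-k(t,y(t,\beta(t)))|\\
&\quad+|k(t,y(t,\beta(t)))-k(\tau,y(\tau,\beta))|,
\end{align*}
bound the first term by $|\beta-\beta(t)|\le C(E_0)|t-\tau|$ using the Step~2 Lipschitz-in-$\beta$ estimate at time $t$, and bound the second term by $C(E_0)|t-\tau|$ using \eqref{k1-k2} (noting $y(\tau,\beta)=y(\tau)$ is the base point of the characteristic, so $k(\tau,y(\tau,\beta))=k(\tau,y(\tau))$ and $k(t,y(t,\beta(t)))=k(t,y(t))$). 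Adding the two estimates gives the claim.

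I expect the main obstacle to be a clean justification that \eqref{k1-k2} and the label bound \eqref{B} really can be applied at an \emph{arbitrary} fixed $\beta$, including times/points where the measure $\mu_{(t)}$ is singular and $y(t,\cdot)$ has a flat piece (so $\beta\mapsto y$ is constant there). On such a flat interval the value $k(t,y(t,\beta))$ is nonetheless unambiguous because $k\circ y$ is Lipschitz in $\beta$, and the characteristic through the corresponding point $y$ is still well-defined by Lemma~\ref{uniqueness}; so the estimate passes through, but this needs to be spelled out. A secondary technical point is that \eqref{k1-k2} is stated for $0\le\tau\le t$ along characteristics, so for the two-sided Lipschitz estimate I would simply apply it with the roles of $\tau$ and $t$ as needed, using that the constant $C(E_0)$ is symmetric. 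Once this is in place, combining with the already-proved Lipschitz continuity in $\beta$ (constant $1$) yields joint Lipschitz continuity of $(t,\beta)\mapsto k(t,y(t,\beta))$ with a constant depending only on $\|\bar k\|_{H^1}$, as asserted.
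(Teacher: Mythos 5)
Your proposal is correct and follows essentially the same route as the paper: the paper also splits $|k(t,y(t,\bar\beta))-k(\tau,y(\tau,\bar\beta))|$ via the characteristic label $\beta(t)$, bounding one term by the Lipschitz-in-$\beta$ estimate together with $|\beta(t)-\bar\beta|\leq \|F\|_{L^\infty}(t-\tau)$ from \eqref{B}, and the other by $(t-\tau)\|P_x\|_{L^\infty}$ from \eqref{k1-k2}. Your additional remarks on flat pieces of $y(t,\cdot)$ and on symmetrizing in $\tau,t$ are fine elaborations that the paper leaves implicit.
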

		\begin{proof}
			Combining  \eqref{F} and \eqref{B}, we get
			\begin{align*}
				|k(t,y(t,\bar{\beta}))-k(\tau,\bar{\beta})|\leq& |k(t,y(t,\bar{\beta}))-k(t,y(t,{\beta}(t)))|+|k(t,y(t,{\beta}(t)))-k(\tau,y(\tau,{\beta}(\tau)))|\notag\\
				\leq&\frac{1}{2}|{\beta}(t)-\bar{\beta}|+(t-\tau)\|P_x\|_{L^{\infty}}\notag\\
				\leq& (t-\tau)(\frac{1}{2}\|F\|_{L^{\infty}}+\|P_x\|_{L^{\infty}}),
			\end{align*}
			which implies the map
			$(t,\beta)\mapsto k(t,x(t,\beta))$ is Lipschitz continuous.
		\end{proof}
		\begin{lemm}
			Let $k\in H^1(\mathbb{R})$ and define the convolution $P$ being as in \eqref{PP}. Then $P_x$ is absolutely continuous and satisfies
			\begin{align}\label{p33}
				P_{xx}=P-\Big(-H(k)+e^{-\lambda t}k^2+\frac  {e^{{-\lambda }t}} {2}k_x^2\Big).
			\end{align}
		\end{lemm}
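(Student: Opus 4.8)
The plan is to use that $\tfrac12 e^{-|x|}$ is the fundamental solution of $\Lambda^2=1-\partial_x^2$, so that, writing
\[
g(t,x):=-H(k)+e^{-\lambda t}k^2+\tfrac{e^{-\lambda t}}{2}k_x^2 ,\qquad P(t,x)=\tfrac12\int_{\mathbb R}e^{-|x-z|}g(t,z)\,dz
\]
by \eqref{PP}, the identity $(1-\partial_x^2)P=g$ should hold classically, i.e. $P_{xx}=P-g$, which is exactly \eqref{p33}. Two things then have to be done: justify the \emph{regularity} (that $P_x$ admits an absolutely continuous representative, not merely that a distributional identity holds) and check that the boundary terms produced by differentiating under the integral cancel. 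The first step I would carry out is to record the integrability of the source: since the hypothesis $k\in H^1(\mathbb R)$ gives $k\in L^2\cap L^\infty$ and $k_x\in L^2$, interpolation yields $k,k^2,k^3,k^4\in L^1(\mathbb R)$ and $k_x^2\in L^1(\mathbb R)$, so $g(t,\cdot)\in L^1(\mathbb R)$ with $\|g(t,\cdot)\|_{L^1}\le C_{E_0}$; note that the term $k_x^2$ is only known to lie in $L^1$, which is why one must stay in the $L^1$ scale.

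Next I would split the convolution for fixed $t$,
\[
P(t,x)=\tfrac12 e^{x}\!\int_x^{+\infty}\!e^{-z}g(t,z)\,dz+\tfrac12 e^{-x}\!\int_{-\infty}^{x}\!e^{z}g(t,z)\,dz ,
\]
and observe that, since $z\mapsto e^{\mp z}g(t,z)\in L^1_{loc}$, the two running integrals are locally absolutely continuous in $x$ by the Lebesgue fundamental theorem of calculus; multiplying by the smooth factors $e^{\pm x}$ preserves this, so $x\mapsto P(t,x)$ is locally absolutely continuous. Differentiating once, the boundary contributions are $\tfrac12 e^{x}\cdot(-e^{-x}g)=-\tfrac12 g$ and $\tfrac12 e^{-x}\cdot e^{x}g=+\tfrac12 g$, and they cancel, leaving
\[
P_x(t,x)=\tfrac12 e^{x}\!\int_x^{+\infty}\!e^{-z}g(t,z)\,dz-\tfrac12 e^{-x}\!\int_{-\infty}^{x}\!e^{z}g(t,z)\,dz=-\tfrac12\int_{\mathbb R}\mathrm{sgn}(x-z)\,e^{-|x-z|}g(t,z)\,dz .
\]
This is again a smooth function times a locally absolutely continuous one, hence $P_x(t,\cdot)$ is itself (locally) absolutely continuous — the first assertion of the lemma. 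Differentiating a second time, the two boundary terms now each equal $-\tfrac12 g(t,x)$, for a total of $-g(t,x)$, while the derivatives of the exponential prefactors reproduce $P(t,x)$, so that for a.e. $x$
\[
P_{xx}(t,x)=\tfrac12 e^{x}\!\int_x^{+\infty}\!e^{-z}g\,dz+\tfrac12 e^{-x}\!\int_{-\infty}^{x}\!e^{z}g\,dz-g(t,x)=P(t,x)-g(t,x),
\]
which is precisely \eqref{p33}.

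A shorter, more abstract route that I would mention as an alternative: since $\tfrac12 e^{-|x|}\in W^{1,1}(\mathbb R)$ and $g\in L^1(\mathbb R)$, Young's inequality gives $P\in L^1$ and $P_x=(\partial_x\tfrac12 e^{-|x|})\ast g\in L^1$, and the distributional relation $(1-\partial_x^2)\tfrac12 e^{-|x|}=\delta_0$ gives $P_{xx}=P-g\in L^1$; thus $P\in W^{2,1}(\mathbb R)$, and since $W^{1,1}(\mathbb R)$ embeds into the absolutely continuous functions, $P_x$ has an absolutely continuous representative satisfying \eqref{p33}. Either way, the proof is essentially routine; the one place that needs genuine care is precisely the point noted above — the $\tfrac{e^{-\lambda t}}{2}k_x^2$ term lies only in $L^1$, so one cannot work in the $L^2$ or $L^\infty$ scale used elsewhere in the paper — together with tracking the sign in the first boundary cancellation, which is what guarantees $P_x$ is continuous across the diagonal $z=x$ and hence differentiable enough for \eqref{p33} to make sense.
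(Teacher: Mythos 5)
Your argument is correct, and you in fact offer two routes: the ``abstract'' alternative you sketch at the end --- the distributional identity $D_x^2\bigl(\tfrac12 e^{-|x|}\bigr)=\tfrac12 e^{-|x|}-\delta_0$ applied to the convolution with $g$ --- is precisely the paper's proof, while your primary route, splitting $P=\tfrac12 e^{x}\int_x^{\infty}e^{-z}g\,dz+\tfrac12 e^{-x}\int_{-\infty}^{x}e^{z}g\,dz$ and differentiating twice by the Lebesgue fundamental theorem of calculus while tracking the boundary-term cancellations, is more explicit and actually exhibits the absolute continuity of $P_x$ asserted in the lemma, a point the paper's one-line argument passes over in silence. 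One inaccuracy you should repair: $H^1(\mathbb{R})$ does not embed into $L^1(\mathbb{R})$, so the linear term $(\alpha+\Gamma)k$ inside $H(k)$ is only in $L^2\cap L^{\infty}$, and your claim that $g(t,\cdot)\in L^1$ with $\|g(t,\cdot)\|_{L^1}\le C_{E_0}$ fails unless $\alpha+\Gamma=0$ (interpolation between $L^2$ and $L^\infty$ gives $L^p$ only for $p\ge 2$; the terms $k^2,k^3,k^4,k_x^2\in L^1$ are fine). This does not sink the proof: the half-line integrals still converge because $e^{\mp z}$ is square integrable on the relevant half-lines, so the $L^2$ part of $g$ is handled by Cauchy--Schwarz, or in your alternative route one writes $g\in L^1+L^2$ and applies Young's inequality to each piece; but as written the $L^1$ bookkeeping is wrong --- and it is worth noting the paper's own proof commits the same slip by stating the identity ``for all $f\in L^1(\mathbb{R})$'' and then choosing an $f$ that contains the merely-$L^2$ linear term in $k$.
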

		\begin{proof}
			The function $\psi(s)=\frac{e^{-|x|}}{2}$ satisfies the distributional identity
			$$D^2_x\psi=\psi-\delta_0.$$
			Thanks to $\delta_0$ denotes a unit Dirac mass at the origin. Thus, for all function $f\in L^1(\mathbb{R}),$ the convolution satisfies
			$$D^2_x(\psi \ast f)=\psi \ast f-f.$$
			Choosing $f=-H(k)+e^{-\lambda t}k^2+\frac  {e^{{-\lambda }t}} {2}k_x^2,$ we obtain the desired result.
		\end{proof}
\subsection{Proof of uniqueness}
We need to seek  a good characteristic, and employ haw the gradient $k_x$ of a conservative solution varies along the good characteristic, and complete the  proof of uniqueness.
		\begin{proof}\textbf{Step 1.} Lemmas \ref{uniqueness}-\ref{l5.3} ensure that the map $(t,\xi)\mapsto (y,k)(t,\xi)$ and $\xi\mapsto F(t,\xi)$ are Lipschitz continuous. Thanks to  Rademacher's theorem, the partial derivatives $y_t, y_{\xi}, k_t,k_{\xi}$ and $F_{\xi}$ exist almost everywhere. Moreover,  $y(t,\xi)$
			is the unique solution to (5.1), and the following holds.
			
			$\textbf{(GC)}$ For  a.e. $\xi$ and a.e. $t\geq0$, the point $(t,\beta(t,\bar{\beta}))$ is a Lebesgue point for the partial derivatives $y_t,y_{\xi},k_t,k_{\xi}$
			and $F_{\xi}$. Moreover, $y_{\xi}(t,\xi)>0$ for $a.e. ~t\geq0.$
			
			If 	$\textbf{(GC)}$ holds, then  $t \rightarrow y(t,\xi)$ is a good characteristic.
			
			\textbf{Step 2.}  We now construct an ODE to describe that the quantities $k_{\xi}$ and $x_{\xi}$ vary along a good characteristic. Supposing that $t,~\tau\notin\mathcal{N},$ and $y(t,\xi)$ is a good characteristic, we then have
			\begin{align*}
				y(t,\beta(t,\xi))=\bar{y}(\xi)+\int_0^t(e^{-\lambda s}k(s,\beta(s,\xi))+\Gamma) ds.
			\end{align*}
			Differentiating the above equation with respect to $\xi$, we deduce that
			\begin{align}\label{p35}
				y_{\xi}=\bar{y}_{\xi}(\xi)+\int_0^tk_{\xi}(s,\xi)d\xi.
			\end{align}
			Likewise, we have
			\begin{align}\label{p36}
				k_{\xi}=\bar{k}_x(\bar{y}(\xi))\bar{y}_{\xi}-\int_0^t G_{\xi}(s,\xi)d\xi.
			\end{align}
From \eqref{p35}-\eqref{p36}, we end up with
			\begin{equation}\label{qp37}
					\left\{\begin{aligned}
			&y_{t\xi}=e^{-\lambda t}k_{\xi},\\
			&k_{t\xi}=-G_{\xi}.
		\end{aligned}\right.
			\end{equation}

			\textbf{Step 3.} We now  return to the original coordinates $(t, x)$ and derive an evolution equation for the
			partial derivative $k_x$ along a “good” characteristic curve.
			For a fixed point $(t, x)$ with $t\notin \mathcal{N}$. Suppose
			that $\bar{x}$ is a Lebesgue point for the map $x\rightarrow k_x(t, x)$, and $\xi$ satisfies $x = y(t, \xi),$ and suppose
			that $t\rightarrow y(t,\xi)$ is a good characteristic, which implies $\textbf{(GC)}$ holds. From \eqref{yy} we have
			\begin{align}\label{p38}
				y_{\beta}(t,\beta)=\frac{1}{1+k^2_x(t,y)}>0,\ \beta(t,\xi)>0,
			\end{align}
			which implies that $y_{\xi}(t,\xi)>0.$
		
		Hence,  the partial derivative
			$k_x$ can be calculated as shown below
			\begin{align*}
				k_x(t,y(t,\beta(t;\tau,\bar{\beta})))=\frac{k_{\xi}(t,y(t,\beta(t,{\xi})))}{y_{\xi}(t,\beta(t;\bar{\xi}))}.
			\end{align*}
		Applying \eqref{qp37} to describe the evolution of $k_{\xi}$ and $y_{\xi},$ we  infer that the map $t\rightarrow k_x(t,y(t,\beta(t,{\xi})))$ is absolutely continuous. It follows that
			\begin{align*}
				\frac{d}{dt}k_x(t,y(t,\beta(t;\tau,{\xi})))&=\frac{d(\frac{k_{\xi}}{y_{\xi}})}{dt}=\frac{y_{\xi}F_{\xi}-e^{-\lambda t}{k_{\xi}}^2}{{y_{\xi}}^2}.
			\end{align*}
			Hence, we conclude that as long as $y_{\beta}\neq 0$, the map $t\rightarrow k_x$  is absolutely continuous.
			
			\textbf{Step 4.} Let
			\begin{align*}
				&K(t,\xi)=k(t,y(t,\xi)),\quad V(t,\xi)=\frac{k_x^2\circ y}{1+k_x^2\circ y},\notag\\
				&W(t,\xi)=\frac{k_x\circ y}{1+k_x^2\circ y},\quad Q(t,\xi)=({1+k_x^2\circ y})\cdot  y_{\xi}.
			\end{align*}
	From which it follows that	
		\begin{equation}
		\left\{\begin{aligned}
			y_t&=e^{{-\lambda }t}K+\Gamma,\\
			K_t&=-G,\\
			V_t&=2W\Big(e^{{-\lambda }t}K^2(1-V)-H(K)(1-V)-\frac {e^{{-\lambda }t}} {2}V-{P}(1-V)\Big),\\
			W_t&=(1-2V)\Big(e^{{-\lambda }t}K^2(1-V)-H(K)(1-V)-\frac {e^{{-\lambda }t}} {2}V-{P}(1-V)\Big),\\
			Q_t&=2WQ\Big(\frac{e^{{-\lambda }t}}{2}+e^{{-\lambda }t}K^2-H(K)-{P}\Big).\\
		\end{aligned} \right.\label{Ky1}
	\end{equation}	
			For any $\xi\in \mathbb{R},$ we deduce that the following initial  conditions
		\begin{equation}
			\left\{\begin{aligned}
				\int_{0}^{\bar{y}}\bar{k}_x^2&dx+\bar{y}(\xi)=\xi,\\
				\bar{K}(\xi)&=\bar{k}\circ \bar{y}(\xi),\\
				\bar{V}(\xi)&=\frac{\bar{k}_x^2\circ \bar{y}}{1+\bar{k}_x^2\circ \bar{y}(\xi)},\\
				\bar{W}(\xi)&=\frac{\bar{k}_x\circ \bar{y}(\xi)}{1+\bar{k}_x^2\circ \bar{y}(\xi)},\\
				\bar{Q}(\xi)&=(1+\bar{k}_x^2\circ \bar{y})\bar{y}_{\xi}(\xi)=1.\\
			\end{aligned}\right.\label{K01}
		\end{equation}
			Making use of  all coefficients is Lipschitz continuous and the previous steps again, the system  \eqref{Ky1}-\eqref{K01} has a unique globally solution.
			
			\textbf{Step 5.} Let $k$ and $\tilde{k}$ be two conservative weak solution of  \eqref{k} with the same
			initial data $\bar{k}\in H^1(\mathbb{R})$
			. For $a.e. ~t\geq0,$  the corresponding Lipschitz continuous maps $\xi \mapsto y(t,\xi), {\xi} \mapsto \tilde{y}(t,\beta)$ are
			strictly increasing. Hence they have continuous inverses, say $x\mapsto{y}^{-1}(t,x),
			x\mapsto{\tilde{y}}^{-1}(t,x)$.
			Thus, we deduce that
			$$y(t,\xi)=\tilde{y}(t,\xi),\  k(t,y(t,\xi))=\tilde{k}(t,{y}(t,\xi)).$$
			Moreover, for $a.e.~ t\geq0$, we have
			$$k(t,x)=k(t,y(t,\xi))=\tilde{k}(t,\tilde{y}(t,\xi))=\tilde{k}(t,x).$$
			Then, we finish the proof of Theorem \ref{uniqueness}.
		\end{proof}	
\paragraph{Proof of Theorem \ref{th2.3}.}  Theorem \ref{global k} and Theorem \ref{th2.4}  ensure that the equation \eqref{k} has a unique globally conservative solution. $~~~~~~~~~~~~~~~~~~~~~~~~~~~~~~~~~~~~~~~~~~~~~~~~~~~~~~~~~~~~~~~~~~~~~~~~~~~~~~~~~~~~~~~~~~\square$

		\noindent\textbf{Acknowledgements.}
	This work was partially supported by NNSFC (Grant No. 12171493), the FDCT (Grant No. 0091/2018/A3), the Guangdong Special Support Program (Grant No.8-2015).
	\smallskip
	\addcontentsline{toc}{section}{\refname}

\end{document}